\newtheorem{thm}{Theorem}[subsection]
\newtheorem{pro}[thm]{Proposition}
\newtheorem{lm}[thm]{Lemma}
\newtheorem{cor}[thm]{Corollary}
\theoremstyle{definition}
\newtheorem{df}[thm]{Definition}
\newtheorem{remark}[thm]{Remark}
\newtheorem{notn}[thm]{Notation}
\let\S\undefined
\DeclareMathOperator\Aut{Aut}
\def\S{\mathrm{S}}
\def\m{\mathfrak{m}}
\def\p{\mathfrak{p}}
\def\Z{\mathrm{Z}}
\def\GL{\mathrm{GL}}
\def\Sym{\text{Sym}}
\DeclareMathOperator{\Gal}{Gal}
\DeclareMathOperator{\spec}{Spec}
\DeclareMathOperator{\Hom}{Hom}
\DeclareMathOperator{\ext}{ext}
\DeclareMathOperator{\val}{val}
\def\T{\mathrm{T}}
\def\R{\mathrm{R}}
\def\X{\mathrm{X}}
\def\Y{\mathrm{Y}}
\def\W{\mathrm{W}}
\def\CC{\mathbb{C}}
\def\ZZ{\mathbb{Z}}
\def\GG{\mathbb{G}}
\def\mcT{\mathcal T}
\DeclareMathOperator{\Res}{Res}
\DeclareMathOperator{\Tr}{Tr}
\def\sep{\mathrm{sep}}
\def\mO{\mathfrak O}
\newcommand{\Del}{\mathrm{Del}}
\newcommand{\ft}{\mathrm{ft}}
\newcommand{\lleq}{\mathrel{\underline{\ll}}}
\newcommand{\ggeq}{\mathrel{\underline{\gg}}}
\DeclareMathOperator{\std}{std}
\DeclareMathOperator{\naive}{naive}
\DeclareMathOperator{\Ind}{Ind}
\title[On congruent isomorphisms for tori]{On congruent isomorphisms for tori}
\author{Anne-Marie Aubert}
\address{Sorbonne Universit\'e and Universit\'e Paris Cit\'e, CNRS,
IMJ-PRG, F-75005 Paris, France}
\email{anne-marie.aubert@imj-prg.fr}
\author{Sandeep Varma}
\address{School of Mathematics, Tata Institute of Fundamental Research, Homi Bhabha Road, Colaba, Mumbai, India}
\email{sandeepvarmav@gmail.com}
\date{\today}
\begin{document}
\maketitle

\begin{abstract} 
Let $F$ and $F'$ be two $l$-close nonarchimedean local fields, where $l$ is a positive integer, and let $\T$ and $\T'$ be two tori 
over $F$ and $F'$, respectively, such that their cocharacter lattices can be identified as modules over the ``at most $l$-ramified'' absolute 
Galois group $\Gamma_F/I_F^l \cong\Gamma_{F'}/I_{F'}^l$. In the spirit of the work of Kazhdan and Ganapathy, for  every positive integer $m$ relative to which $l$ is large, we construct a congruent
isomorphism $\T(F)/\T(F)_m \cong \T'(F')/\T'(F')_m$, where $\T(F)_m$ and  $\T(F')_m$ are the minimal congruent filtration subgroups 
of $\T(F)$ and  $\T(F')$, respectively, defined by J.-K.~Yu. We prove that this isomorphism is functorial and compatible with 
both the isomorphism constructed by Chai and Yu and the Kottwitz homomorphism for tori.  We show that, when $l$ is even larger relative 
to $m$, it moreover respects the local Langlands correspondence for tori.
\end{abstract}

\section{Introduction} \label{sec: introduction}

\subsection{A crude version of the main result}

Two nonarchimedean local fields $F$ and $F'$ are said to be $l$-close, where $l$
is a positive integer, if $\mO_F/\p_F^l \cong \mO_{F'}/\p_{F'}^l$,
where $\mO_?$ stands for the ring of integers of $?$, and
and $\p_?$ for the maximal ideal of $\mO_?$.

If $F$ and $F'$ are $l$-close, then P. Deligne (\cite{Del84}) constructs
an isomorphism $\Gamma_F/I_F^l \rightarrow \Gamma_{F'}/I_{F'}^l$ now
known as a Deligne isomorphism,
where $\Gamma_?$ denotes the Galois group of a chosen separable closure over $?$,
and $I_?^l$ stands for the $l$-th upper ramification filtration
subgroup of the inertia subgroup $I_? \subset \Gamma_?$.
If further $F$ and $F'$ have finite residue
fields, Kazhdan isomorphisms (see \cite{Kaz86}), pioneered
by D. Kazhdan and studied by various others, notably by
R. Ganapathy (see, e.g., \cite{Gan15} and \cite{Gan22}),
allow us to relate harmonic analysis on reductive
groups over $F$ to that on reductive groups over $F'$.
Thus, for instance, one could hope to study local Langlands correspondence
for a group over $F'$ by using local Langlands correspondence for
a group over $F$, if the latter is known.

One has a good understanding of Kazhdan isomorphisms for split groups,
by \cite{Kaz86} and \cite{Gan15}. For reductive groups that may not be split,
Kazhdan isomorphisms have been constructed by Ganapathy
in \cite{Gan22}. However, a lot of the properties of these isomorphisms
remain to be studied, and such a study is being pursued by
Ganapathy and her collaborators.

In the present paper, we will stick to tori, and investigate
questions related to Kazhdan isomorphisms
$\T(F)/\T(F)_m \cong \T'(F')/\T'(F')_m$ for tori, when $F$ and $F'$ are $l$-close and 
$\T'/F'$ is a \textit{transfer} of $\T/F$, that is, we have an identification
$X^*(\T) = X^*(\T')$ of character lattices, or equivalently
an identification $X_*(\T) = X_*(\T')$ of cocharacter lattices,
as modules over $\Gamma_F/I_F^l$, identified via \cite{Del84}
with $\Gamma_{F'}/I_{F'}^l$,
under the implicitly imposed assumption that $I_F^l$ acts trivially on $X^*(\T)$
and $I_{F'}^l$ on $X^*(\T')$. Here the filtrations $\{\T(F)_m\}_{m \geq 0}$ and 
$\{\T(F')_m\}_{m \geq 0}$ are the minimal congruent filtrations of $\T(F)$ and  $\T(F')$, 
respectively, as defined in \cite{Yu15}.

A crude version of our main result, which we will state in greater detail
in Theorem \ref{thm: Ganapathy Kazhdan isomorphism} below, is as follows:
\begin{thm} \label{thm: Ganapathy Kazhdan isomorphism crude version}
Suppose a local field $F$ is $l$-close to a local field $F'$,
and a torus $\T'/F'$ is a transfer of a torus $\T/F$. Then:
\begin{enumerate}[(i)]
\item If $l$ is large relative to $m$, then there
exists a (necessarily unique) ``congruent''
isomorphism $\T(F)/\T(F)_m \rightarrow \T'(F')/\T'(F')_m$.
\item These isomorphisms are suitably functorial.
\item They are compatible with the isomorphisms constructed by Chai and Yu
(see \cite{CY01}) and with Kottwitz homomorphisms for
tori (see \cite[Section 7]{Kot97} and \cite[Section 11.1]{KP23}).
\item If $l$ is even larger relative to $m$, these isomorphisms
respect the local Langlands correspondence for tori.
\end{enumerate}
\end{thm}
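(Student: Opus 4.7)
The plan is to construct the congruent isomorphism by assembling it from two pieces that are each more directly transferable: the ``integral'' part $\T(F)_0/\T(F)_m$ (where $\T(F)_0$ is the maximal bounded subgroup, arising from the connected N\'eron model), and the ``topological'' quotient $\T(F)/\T(F)_0$. For the first piece I would invoke the theorem of Chai and Yu (\cite{CY01}): since the cocharacter lattices $X_*(\T)$ and $X_*(\T')$ agree as modules over the $l$-truncated Galois group $\Gamma_F/I_F^l \cong \Gamma_{F'}/I_{F'}^l$, the connected N\'eron models of $\T$ and $\T'$ are congruent modulo $\p_F^m$ once $l$ is large enough relative to $m$ and to the ramification of $\T$, yielding a canonical isomorphism $\T(F)_0/\T(F)_m \cong \T'(F')_0/\T'(F')_m$. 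For the second piece, the Kottwitz homomorphism describes $\T(F)/\T(F)_0$ purely in terms of $X_*(\T)$ together with the Frobenius action (see \cite[Section~7]{Kot97} and \cite[Section~11.1]{KP23}), so this quotient can be transported as well via the Deligne identification, which, crucially, also identifies the residual Frobenius since the residue fields of $F$ and $F'$ coincide.

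I would then assemble these pieces using the short exact sequence
\[
1 \to \T(F)_0/\T(F)_m \to \T(F)/\T(F)_m \to \T(F)/\T(F)_0 \to 1,
\]
checking that the data needed to splice them (e.g.\ compatible lifts of generators of the Kottwitz quotient into suitable integral models) is itself transferable. The ``congruent'' isomorphism is then uniquely determined by the requirement that it realize the Chai--Yu isomorphism on the subgroup and factor through the Kottwitz identification on the quotient; any residual ambiguity would live in $\Hom(\T(F)/\T(F)_0, \T'(F')_0/\T'(F')_m)$, which I would eliminate by Frobenius-weight considerations combined with the compatibility requirements.

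Parts (ii) and (iii) should then follow more or less formally. Functoriality inherits from the functoriality of the Chai--Yu construction, the naturality of the Kottwitz homomorphism, and the naturality of the Deligne isomorphism on Galois modules. Part (iii) is essentially built into the construction: the isomorphism of (i) is \emph{defined} precisely so as to realize Chai--Yu on $\T(F)_0/\T(F)_m$ and to be compatible with Kottwitz on $\T(F)/\T(F)_0$.

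The substantive work lies in (iv), where I would dualize. By local Langlands for tori, characters of $\T(F)$ trivial on $\T(F)_m$ correspond to continuous cohomology classes in $H^1_\cts(W_F, \hat\T)$ that are trivial on $I_F^{l'}$ for some $l'$ depending on $m$ and on the splitting of $\T$. Deligne's isomorphism $W_F/I_F^l \cong W_{F'}/I_{F'}^l$ then induces a bijection between such classes on the two sides, and the remaining task is to verify that this bijection is Pontryagin-dual to the congruent isomorphism of~(i). This amounts to matching the normalization of LLC against the explicit Chai--Yu and Kottwitz descriptions of $\T(F)/\T(F)_m$, exploiting the depth-preservation of LLC for tori (cf.\ work of Yu and of Kaletha). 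I expect the principal obstacle to lie precisely here: making the depth and ramification bookkeeping tight enough that ``$l$ sufficiently large relative to $m$'' can be chosen, uniformly in $\T$, to ensure that the Langlands parametrization and the Deligne identification compose correctly to give the congruent isomorphism of~(i).
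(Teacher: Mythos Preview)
Your two ingredients are exactly the ones the paper uses, and your short exact sequence is implicitly present (see the diagram \eqref{eqn: Chai-Yu Kottwitz compatibility}). But the paper does \emph{not} build the isomorphism by splicing the Chai--Yu piece and the Kottwitz piece along that sequence, and your proposal has a genuine gap precisely at the splice.

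The issue is the sentence ``checking that the data needed to splice them \dots\ is itself transferable'' followed by ``any residual ambiguity would live in $\Hom(\T(F)/\T(F)_0, \T'(F')_0/\T'(F')_m)$, which I would eliminate by Frobenius-weight considerations''. That $\Hom$ group is typically large (already for $\T=\GG_m$ it is $(\mO_{F'}/\p_{F'}^m)^{\times}$), and no weight argument kills it. What actually pins down the extension is the extra rigidity in Deligne's triple $(\mO_F/\p_F^l,\p_F/\p_F^{l+1},\epsilon)$, which determines where the class of a uniformizer goes; for general $\T$ one needs the analogous rigidity after passage to a splitting field. Your proposal does not say how to package this.

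The paper resolves this by introducing an \emph{a priori} characterization of the isomorphism, independent of any splicing: call $t\in\T(F)$ and $t'\in\T'(F')$ \emph{standard correspondents} if $\chi(t)$ and $\chi'(t')$ match under Deligne's isomorphism $L^{\times}/(1+\p_L^{\lceil er\rceil})\cong {L'}^{\times}/(1+\p_{L'}^{\lceil er\rceil})$ for every $\chi\in X^*(\T)$, where $L$ is a splitting field. A \emph{standard isomorphism} is one carrying each class to a standard correspondent. This notion is what replaces your ``compatible lifts'', and uniqueness is then automatic. The nontrivial point is existence: the paper first proves it over a maximal unramified extension $\tilde F$ (strictly Henselian), where the norm $N_{L/F}\colon\T(L)\to\T(F)$ is surjective because $H^1(\tilde F,\ker N_{L/F})=0$; then it uses Chai--Yu (specifically the inclusion $\T(\tilde F)^{\naive}_{m+h}\subset\T(\tilde F)_m$) to show the standard isomorphism over $\tilde F$ descends to the minimal congruent filtration; and finally it takes $\Gal(\tilde F/F)$-invariants, using $H^1(\Gamma_{\tilde F/F},\T(\tilde F)_m)=0$. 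The Chai--Yu and Kottwitz compatibilities are then consequences of the standard-correspondent characterization rather than inputs to the construction, and the LLC compatibility in (iv) reduces cleanly to Deligne's compatibility of class field theory with his isomorphism, via corestriction from the split case over $L$.

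In short: your outline identifies the right landmarks, but the route through ``splice and kill the ambiguity'' does not go through as stated; the paper's route through standard isomorphisms over $\tilde F$ is what supplies both the well-definedness and the uniqueness you are asserting.
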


\subsection{Statement of the main result --- more precise version} \label{subsec:1.2}

Now we state our main result, Theorem \ref{thm: Ganapathy Kazhdan isomorphism}
below, in terms of objects and notation defined in later sections, especially
Section \ref{sec: notation}; however, let
us give an introduction to the main objects involved:
\begin{enumerate}[(i)]
\item If we say $(F, \T) \leftrightarrow_l (F', \T')$
(see Notation \ref{notn: close local fields tori}\eqref{close local fields tori}),
we roughly mean that $F$ and $F'$ are discretely valued Henselian fields
with perfect residue field that are $l$-close to each other
($F \leftrightarrow_l F'$), and that the torus $\T'$
over $F'$ is 
a transfer of the torus $\T$ over $F$.

\item For $(F, \T)$ as above, $h(F, \T)$ is a positive integer
from \cite[Section 8.1]{CY01}, sort of upper-bounding the nontriviality
of the smoothening process required to arrive at the
N\'eron model of $\T$.

\item We will be interested in
``congruent isomorphisms'' $\T(F)/\T(F)_m \rightarrow \T'(F')/\T'(F')_m$
(Definition \ref{df: standard congruent Chai-Yu}\eqref{congruent isomorphism}).
These are isomorphisms of abelian groups.

\item Interpolating the $\T(F)_m$, with $m$ varying over nonnegative
integers, are the ``minimal congruent filtration subgroups''
$\T(F)_r$ of J.-K. Yu, with $r$ varying over nonnegative real numbers.

\item Each torus $\T$ determines a relation ``$m \lleq_{\T} l$''
meaning that $l$ is sufficiently large relative to $m$ and the Herbrand
function of a minimal splitting field for $\T$
(see Notation \ref{notn: close local fields tori}\eqref{lleq T}).
\end{enumerate}

Our more precise version
of Theorem \ref{thm: Ganapathy Kazhdan isomorphism crude version}
is as follows; note that it has individual assertions
that are more precise versions of the corresponding assertions 
of Theorem \ref{thm: Ganapathy Kazhdan isomorphism crude version}.

\begin{thm} \label{thm: Ganapathy Kazhdan isomorphism}
\begin{enumerate}[(i)]
\item \label{homomorphisms}
Suppose $(F, \T) \leftrightarrow_l (F', \T')$, set $h = h(F, \T)$,
and suppose $m$ is a positive integer, with
$0 < m + 3 h(F, \T) \lleq_{\T} l$.
Then there exists a (unique) congruent isomorphism
\[ \T(F)/\T(F)_m \rightarrow \T'(F')/\T'(F')_m. \]
Moreover,
if $m + 3 h(F, \T) + 1 \lleq_{\T} l$, then this isomorphism
respects the minimal congruent filtration, i.e., takes the image
of $\T(F)_r$ to that of $\T'(F')_r$, for $0 \leq r \leq m$.

\item \label{homomorphisms functorial} The isomorphisms
of \eqref{homomorphisms} satisfy the following functoriality.
Whenever $(F, \T_i) \leftrightarrow_l (F', \T_i')$ for $i = 1, 2$,
with the same underlying $F \leftrightarrow_l F'$, and
$0 < m + 3 h(F, \T_i) \lleq_{\T_j} l$ for $i, j \in \{1, 2\}$,
and we are given homomorphisms $\T_1 \rightarrow \T_2$ and
$\T_1' \rightarrow \T_2'$ inducing the same homomorphism
$X^*(\T_2) = X^*(\T_2') \rightarrow X^*(\T_1') = X^*(\T_1)$,
the following diagram is commutative:
\[
\xymatrix{
\T_1(F)/\T_1(F)_m \ar[r] \ar[d] & \T_2(F)/\T_2(F)_m \ar[d] \\
\T_1'(F')/\T_1'(F')_m \ar[r] & \T_2'(F')/\T_2'(F')_m
},
\]
where the vertical arrows are as in \eqref{homomorphisms}, and the
horizontal arrows are induced by the homomorphisms $\T_1 \rightarrow \T_2$
and $\T_1' \rightarrow \T_2'$.

\item \label{Chai-Yu Kottwitz compatibility}
In the setting of \eqref{homomorphisms}, we have the following
compatibility with the Chai-Yu isomorphisms and Kottwitz homomorphisms,
in the sense that the following diagram is commutative:
\begin{equation} \label{eqn: Chai-Yu Kottwitz compatibility}
\xymatrix{
\T(F)_b/\T(F)_m \ar[r] \ar[d] &
\T(F)/\T(F)_m \ar[r] \ar[d] & (X_*(\T)_{I_F})^{\Gamma_{\kappa_F}} \ar[d] \\
\T'(F')_b/\T'(F')_m \ar[r] &
\T'(F')/\T'(F')_m \ar[r] & (X_*(\T')_{I_{F'}})^{\Gamma_{\kappa_{F'}}} \\
},
\end{equation}
where $\T(F)_b$ (resp., $\T(F')_b$) denotes the maximal bounded subgroup of $ \T(F)$ (resp., $\T(F')$), the left vertical arrow is induced by the Chai-Yu isomorphism
of \cite[Theorem 8.5]{CY01},
the middle vertical arrow is as in \eqref{homomorphisms}, the right vertical
arrow is induced by the $\Gamma_F/I_F^l = \Gamma_{F'}/I_{F'}^l$-equivariant
identification $X_*(\T) = X_*(\T')$,
and the second horizontal arrow of either row is the Kottwitz
homomorphism.

\item \label{LLC compatibility}
In the setting of \eqref{homomorphisms},
if $F$ and $F'$ are complete and their residue field $\kappa_F = \kappa_{F'}$
is finite, and we assume the stronger inequality
$0 < m + 4 h(F, \T) \lleq_{\T} l$, we have the following compatibility
with the local Langlands correspondence for tori. We have a commutative diagram 
\begin{equation} \label{eqn: LLC compatibility}
\xymatrix{
\Hom(\T(F)/\T(F)_m, \CC^{\times})
\ar@{^{(}->}[r] \ar[d] & H^1(W_F/I_F^l, \hat \T) \ar[d]_{\cong}^{\Del_l} \\
\Hom(\T'(F')/\T'(F')_m, \CC^{\times})
 \ar@{^{(}->}[r] & H^1(W_{F'}/I_{F'}^l, \hat \T')
},
\end{equation}
where the horizontal arrows are given by the local Langlands correspondence
for tori, the left vertical arrow is induced by
the isomorphism $\T(F)/\T(F)_m \rightarrow \T'(F')/\T'(F')_m$ of
\eqref{homomorphisms}, and the right vertical arrow is obtained by combining
the Deligne isomorphism $W_F/I_F^l \cong W_{F'}/I_{F'}^l$ together with
the $\Gamma_F/I_F^l \cong \Gamma_{F'}/I_{F'}^l$-equivariant
identification
$\hat \T = \Hom(X_*(\T), \CC^{\times}) = \Hom(X_*(\T'), \CC^{\times})
= \hat \T'$. Here, to make sense of the top row (to which the bottom row
is analogous), part of the assertion, implicitly,
is that the image of the subset
\[\Hom(\T(F)/\T(F)_m, \CC^{\times}) \subset
\Hom_{\mathrm{cts}}(\T(F), \CC^{\times})\] under the
local Langlands correspondence
is contained in the subset of $H^1(W_F, \hat \T)$ obtained by inflation
from $H^1(W_F/I_F^l, (\hat \T)^{I_F^l}) = H^1(W_F/I_F^l, \hat \T)$.
\end{enumerate}
If further $\T$ is weakly induced in the sense of \cite{KP23},
i.e., satisfies the condition \textup{(T)} of \cite{Yu15}, i.e.,
becomes an induced torus after base-change to some tamely
ramified extension, one can replace $h(F, \T), h(F, \T_1)$
and $h(F, \T_2)$ by $0$ in the above statements.
\end{thm}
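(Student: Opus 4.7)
The plan is to construct the isomorphism in \eqref{homomorphisms} via the N\'eron model together with the Chai-Yu description of its congruent filtration, and then deduce the remaining assertions from properties of this construction. First I would fix a minimal splitting field $E/F$ of $\T$ together with its transfer $E'/F'$; the Herbrand-function condition built into $m \lleq_{\T} l$ should ensure that $E \leftrightarrow_{l'} E'$ for an $l'$ governed by the ramification of $E/F$. The bounded subgroup $\T(F)_b$ coincides with $\mcT(\mO_F)$ for the N\'eron model $\mcT$, which Chai and Yu obtain by iteratively smoothening the $\Gal(E/F)$-fixed subscheme of $\Res_{\mO_E/\mO_F}\Gm^n$; by \cite{CY01}, this smoothening depends only on mod-$\p_F^{h}$ data for $h = h(F, \T)$. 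I would therefore first extract a Chai-Yu isomorphism $\T(F)_b/\T(F)_m \cong \T'(F')_b/\T'(F')_m$ on the bounded subquotient, and then splice in the Kottwitz exact sequence whose quotient $X_*(\T)_{I_F}^{\Gamma_{\kappa_F}}$ is identified with its counterpart via the $\Gamma_F/I_F^l$-equivariant isomorphism $X_*(\T) = X_*(\T')$. Uniqueness is forced by the definition of a congruent isomorphism; the extra unit in the hypothesis $m + 3h + 1 \lleq_{\T} l$ is precisely what is needed for the resulting isomorphism to match the whole filtration $\T(F)_r$ for $0 \leq r \leq m$, rather than only the final quotient.

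Part \eqref{homomorphisms functorial} will follow because the N\'eron model is a functor, smoothening is compatible with morphisms of tori at the truncation level we use, and the Kottwitz sequence is natural, so splicing it in commutes with pushforward along $\T_1 \to \T_2$. Part \eqref{Chai-Yu Kottwitz compatibility} is then largely bookkeeping: the left-hand square of \eqref{eqn: Chai-Yu Kottwitz compatibility} commutes because the middle vertical arrow was constructed to extend the Chai-Yu isomorphism, and the right-hand square commutes because the Kottwitz homomorphism depends only on $X_*(\T)$ as a $\Gamma_F/I_F$-module, which is identified with $X_*(\T')$ by hypothesis.

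The main obstacle I anticipate is \eqref{LLC compatibility}. The LLC for tori factors through a map $\Hom_{\cts}(\T(F), \CC^{\times}) \to H^1(W_F, \hat \T)$ arising from Langlands's construction via Tate-Nakayama duality, and my plan is to proceed in two steps. First, I would show that if a character $\chi$ of $\T(F)$ vanishes on $\T(F)_m$, then its Langlands parameter is already inflated from $H^1(W_F/I_F^l, \hat\T)$; this is a depth-comparison statement for tori, and the strengthened bound $m + 4h \lleq_{\T} l$ (as opposed to $m + 3h$) should exist precisely to absorb the additional loss of information introduced by the cocycle construction realising Langlands's map. Second, I would verify that the Deligne isomorphism $W_F/I_F^l \cong W_{F'}/I_{F'}^l$ combined with $\hat\T = \hat\T'$ makes the square \eqref{eqn: LLC compatibility} commute; this reduces to checking that the cup product with the relevant fundamental class, and the explicit description of the parameter of $\chi$ in terms of its restriction to $\T(\mO_E)$, depend only on data preserved by Deligne, i.e., on the mod-$\p^?$ data of $F$, $E$, and $\mcT$ at the appropriate truncation. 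Finally, when $\T$ is weakly induced, the smoothening step is trivial because a tame Weil restriction of a smooth scheme remains smooth, so $h$ can be taken to be $0$ throughout.
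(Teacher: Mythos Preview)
Your overall architecture is recognisable, but the key step in \eqref{homomorphisms} is handled differently from the paper, and your version has a genuine gap.

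You propose to get $\T(F)/\T(F)_m \cong \T'(F')/\T'(F')_m$ by taking the Chai--Yu isomorphism on $\T(F)_b/\T(F)_m$ and ``splicing in the Kottwitz exact sequence'' on the unbounded quotient. This is problematic for two reasons. First, the Kottwitz map $\T(F) \to (X_*(\T)_{I_F})^{\Gamma_{\kappa_F}}$ is not surjective in general, so there is no three-term short exact sequence to splice; even granting that the images on both sides agree, you are left with an extension problem for which you give no argument. Second, and more seriously, the theorem asserts the existence of a \emph{congruent} isomorphism in the technical sense of Definition~\ref{df: standard congruent Chai-Yu}\eqref{congruent isomorphism}: one induced by a standard isomorphism over a maximal unramified extension $\tilde F$. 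An isomorphism produced by ad hoc splicing over $F$ is not obviously of this form. The paper instead passes to $\tilde F$, where the norm map $\T(\tilde L) \to \T(\tilde F)$ is surjective (strict Henselianness kills the relevant $H^1$), so a standard isomorphism $\T(\tilde F)/\T(\tilde F)^{\naive}_r \to \T'(\tilde F')/\T'(\tilde F')^{\naive}_r$ exists on the \emph{whole} group (Proposition~\ref{pro: standard isomorphism strictly Henselian}); the role of Chai--Yu is then only to check that this standard isomorphism respects $\T(\tilde F)_m$. One then descends to $F$ by taking $\Gal(\tilde F/F)$-invariants, using $H^1(\Gal(\tilde F/F), \T(\tilde F)_m) = 0$. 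This sidesteps both the surjectivity and the extension issues.

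Your account of \eqref{LLC compatibility} is also off target. The extra $h$ in $m + 4h$ is not there to ``absorb loss of information in the cocycle construction''. The point is that LLC-compatibility is proved for \emph{standard} isomorphisms (Proposition~\ref{pro: standard isomorphism LLC}, via reduction to $\GL_1$ by corestriction and Shapiro, then Deligne's compatibility of local class field theory with $\Del_l$), and one needs a standard isomorphism over $F$ itself, not just over $\tilde F$. The paper obtains one by first building a congruent isomorphism at level $m + h$ (this costs $3h$, hence $m + 4h \lleq_{\T} l$), which by Lemma~\ref{lm: congruent isomorphism}\eqref{congruent isomorphism s} yields a standard isomorphism $\T(F)/\T(F)^{\naive}_{m+h} \to \T'(F')/\T'(F')^{\naive}_{m+h}$; since $\T(F)^{\naive}_{m+h} \subset \T(F)_m$ (Lemma~\ref{lm: MP intersection}), this induces the congruent isomorphism at level $m$ and feeds directly into Proposition~\ref{pro: standard isomorphism LLC}. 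Your proposed route through fundamental classes and Tate--Nakayama could perhaps be made to work, but you have not identified the actual mechanism, and the depth-comparison statement you allude to is exactly what the paper's machinery is set up to provide.
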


\subsection{The case of split tori}

The case of split tori is an obvious extension of the $\GL_1$-case
covered by \cite{Del84}, and is a very special case of \cite{Kaz86}.

\subsubsection{Deligne's triples.}
We first digress to remark on some fine print.
For all these considerations, choosing isomorphisms is important to ensure
that various constructions are well-defined. Thus, Deligne works
not with isomorphisms $\mO_F/\p_F^l \cong \mO_{F'}/\p_{F'}^l$ of
truncated discrete valuation rings, but rather
with slightly more rigidified data in the form of isomorphisms
$(\mO_F/\p_F^l, \p_F/\p_F^{l + 1}, \epsilon) \rightarrow
(\mO_{F'}/\p_{F'}^l, \p_{F'}/\p_{F'}^{l + 1}, \epsilon')$
of triples, where $\epsilon$ is the obvious map $\p_F/\p_F^{l + 1} \rightarrow
\mO_F/\p_F^l$, and $\epsilon'$ is analogous. Fixing such an isomorphism
is what lets one construct the Deligne isomorphism
$\Gamma_F/I_F^l \cong \Gamma_{F'}/I_{F'}^l$ of \cite{Del84}
and show that it is well-defined up to an inner conjugation.

\subsubsection{The case of $\GL_1$, from \cite{Del84}}
First, if $\T = \GL_1/F$ and $\T' = \GL_1/F'$ compatibly, the required
isomorphism, say when $m = l$, is the isomorphism
\begin{equation} \label{eqn: truncated multiplicative groups}
F^{\times}/(1 + \p_F^l) \rightarrow {F'}^{\times}/(1 + \p_{F'}^l)
\end{equation}
constructed by Deligne from the realization
$F \leftrightarrow_l F'$, in a canonical manner starting from the
truncated data, in \cite[Section 1.2]{Del84}. 
A more concrete but slightly less obviously canonical description
for \eqref{eqn: truncated multiplicative groups} requires it to
\begin{itemize}
\item restrict to $(\mO_F/\p_F^l)^{\times} \cong (\mO_{F'}/\p_{F'}^l)^{\times}$
on $\mO_F^{\times}/(1 + \p_F^l) = (\mO_F/\p_F^l)^{\times}$, and
\item send the image of a uniformizer $\varpi_F$ to that of
a uniformizer $\varpi_{F'}$ whenever $\varpi_F$ and $\varpi_{F'}$
are compatible under $\p_F/\p_F^{l + 1} \rightarrow \p_{F'}/\p_{F'}^{l + 1}$.
\end{itemize}

\subsubsection{General split tori}
When $\T$ and $\T'$ are split but
otherwise general, the datum relating $\T'$ to $\T$
amounts to just an isomorphism $X^*(\T) \rightarrow X^*(\T')$
of abelian groups, say $\chi \mapsto \chi'$. Then our
isomorphism $\T(F)/\T(F)_m \rightarrow \T'(F')/\T'(F')_m$
is defined so as to match the images of $t \in \T(F)$ and $t' \in \T'(F')$
precisely when for each $\chi \in X^*(\T)$ identifying with
$\chi' \in X^*(\T')$, the images of $\chi(t)$ and $\chi'(t')$ correspond
under \eqref{eqn: truncated multiplicative groups}.

For general tori, in the spirit of the above discussion,
we find it convenient to specify the isomorphism
$\T(F)/\T(F)_m \rightarrow \T'(F')/\T'(F')_m$ by
forcing compatibilities that characterize it.

\subsection{Standard and congruent isomorphisms}

In addition to the congruent filtration subgroups $\T(F)_m$,
we will need the ``naive'' filtration subgroups $\T(F)^{\naive}_r$ ($r \geq 0$):
\begin{equation} \label{eqn:naive}
 \T(F)^{\naive}_r = \{t \in \T(F) _b \mid \val_F(\chi(t) - 1) \geq r, \; \forall \,
\chi \in X^*(\T)\}, 
 \end{equation}
for a suitable extension $\val_F$ of the normalized discrete valuation on $F$.

When $(F, \T) \leftrightarrow_l (F', \T')$, one defines:
\begin{enumerate}[(i)]
\item for $0 < r \lleq_{\T} l$,
a standard isomorphism $\T(F)/\T(F)^{\naive}_r \rightarrow
\T'(F')/\T'(F')^{\naive}_r$ to be one that matches the images of $t \in \T(F)$
and $t' \in \T'(F')$ whenever $\chi(t)$ and $\chi'(t')$ have images
that match under a suitable extension of \eqref{eqn: truncated multiplicative
groups} (see Definition \ref{df: standard congruent Chai-Yu}\eqref{standard isomorphism} for more details).

\item a congruent isomorphism $\T(F)/\T(F)_m \rightarrow
\T'(F')/\T'(F')_m$ to be an isomorphism induced by a standard isomorphism
after passage to maximal unramified extensions
(see Definition \ref{df: standard congruent Chai-Yu}\eqref{congruent isomorphism}
for more details).
\end{enumerate}

Standard isomorphisms are unique when they exist,
and have good functoriality properties, compatibility
with the Kottwitz homomorphism, and
(in the case of complete fields with finite residue field)
compatibility with the local Langlands correspondence.
Congruent isomorphisms inherit
the first three of these four properties, and under stronger assumptions
the fourth too.

Like with \cite{Gan22}, we too make use of an argument following
the construction of the Kottwitz homomorphism 
for tori (see \cite[Proposition 11.1.1]{KP23}):
the following simple yet not entirely obvious fact gets us started
(Proposition \ref{pro: standard isomorphism strictly Henselian}).
\begin{pro}
If $(F, \T) \leftrightarrow_l (F', \T')$, $F$ is strictly Henselian, and
$0 < r \lleq_{\T} l$, then a standard isomorphism
$\T(F)/\T(F)^{\naive}_r \rightarrow \T'(F')/\T'(F')^{\naive}_r$ exists.
\end{pro}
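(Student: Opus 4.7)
The plan is to reduce to Deligne's $\GL_1$ isomorphism, applied over corresponding splitting fields of $\T$ and $\T'$.

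Since $F$ is strictly Henselian, $\Gamma_F = I_F$, so the implicit hypothesis that $I_F^l$ acts trivially on $X^*(\T)$ forces the Galois action to factor through $I_F/I_F^l$. Consequently, there is a finite Galois subextension $L$ of $\overline F$ contained in $\overline F^{I_F^l}$ over which $\T$ splits. Let $\Gamma = \Gal(L/F)$; note that $L/F$ is totally ramified. Deligne's correspondence supplies a matching totally ramified Galois extension $L'/F'$ with $\Gal(L'/F') \cong \Gamma$ over which $\T'$ splits, and the hypothesis $0 < r \lleq_{\T} l$ is designed precisely so that the Herbrand function of $L/F$ translates $l$-closeness of $F, F'$ into $s$-closeness of $L, L'$ for some $s$ that exceeds $re$ by a safe margin, where $e = [L:F]$.

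Deligne's $\GL_1$ isomorphism applied to $(L, L')$ then yields a canonical $\Gamma$-equivariant isomorphism
\[
\delta : L^{\times}/U_L^{s} \xrightarrow{\sim} {L'}^{\times}/U_{L'}^{s}.
\]
Since $\T$ splits over $L$, there are canonical identifications $\T(F) = \Hom_{\Gamma}(X^*(\T), L^{\times})$ and $\T(F)^{\naive}_r = \Hom_{\Gamma}(X^*(\T), U_L^{re})$, with the analogous ones for $\T'(F')$. Given $t \in \T(F)$ represented by $\phi \in \Hom_{\Gamma}(X^*(\T), L^{\times})$, applying $\delta$ componentwise (using $X^*(\T) = X^*(\T')$) to the reduction of $\phi$ mod $U_L^s$ produces a $\Gamma$-equivariant map $\overline{\phi'} \in \Hom_{\Gamma}(X^*(\T'), {L'}^{\times}/U_{L'}^{s})$. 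Take $t' \in \T'(F')$ to correspond to any $\Gamma$-equivariant lift $\phi' \in \Hom_{\Gamma}(X^*(\T'), {L'}^{\times})$ of $\overline{\phi'}$. Two such lifts differ by an element of $\Hom_{\Gamma}(X^*(\T'), U_{L'}^{s}) \subset \T'(F')^{\naive}_{s/e} \subset \T'(F')^{\naive}_r$, so the class of $t'$ modulo $\T'(F')^{\naive}_r$ is independent of choices. Symmetry (via $\delta^{-1}$) gives a bijection, and by construction it matches the images of $\chi(t)$ and $\chi'(t')$ under $\delta$, so it is a standard isomorphism.

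The main obstacle is the existence of the $\Gamma$-equivariant lift $\phi'$: the obstruction lies in $H^1(\Gamma, \Hom(X^*(\T'), U_{L'}^{s}))$ and must be shown to vanish for every $\overline{\phi'}$ arising from some $\phi$. On the $F$-side the corresponding obstruction vanishes tautologically, since $\phi$ itself is a $\Gamma$-equivariant lift of $\overline\phi$; the crux is that this vanishing transports across $\delta$. Because $\delta$ is defined only at the quotient level $L^{\times}/U_L^s$, this transport does not follow from naive functoriality of long exact sequences, and must instead be extracted from the canonical construction of $\delta$ in \cite[Section 1.2]{Del84}, level by level, exploiting the margin between $s$ and $re$ provided by $r \lleq_{\T} l$.
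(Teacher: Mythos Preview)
Your proposal has a genuine gap at exactly the point you yourself flag: the existence of the $\Gamma$-equivariant lift $\phi'$. You correctly observe that the obstruction lives in $H^1(\Gamma, \Hom(X^*(\T'), U_{L'}^{s}))$, and that vanishing on the $F$-side does not transport to the $F'$-side via $\delta$, since $\delta$ only exists at the truncated level. But your final paragraph is a plan, not an argument: you say the vanishing ``must instead be extracted from the canonical construction of $\delta$ \dots\ level by level,'' without carrying this out. There is no obvious d\'evissage here: the groups $U_{L'}^i/U_{L'}^{i+1}$ are $\kappa$-vector spaces with a possibly nontrivial $\Gamma$-action (twisted by $X_*(\T')$), and since $\Gamma$ can have order divisible by the residue characteristic, their $H^1$ need not vanish. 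Also, your claim of a ``safe margin'' between $s$ and $re$ is not justified: the hypothesis $r \lleq_{\T} l$ only gives $re \leq \psi_{L/F}(l) = s$, and equality can occur.

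The paper's proof sidesteps the lifting obstruction entirely by a different idea. Rather than trying to lift a $\Gamma$-invariant element of $\Hom(X^*(\T'), {L'}^{\times}/U_{L'}^{s})$, one uses that the \emph{norm} $N_{L/F}\colon \T(L) \to \T(F)$ is surjective: its kernel $\T_0$ is a torus, and $H^1(F, \T_0) = 0$ since $F$ is strictly Henselian with perfect residue field. The standard isomorphism $\T(L)/\T(L)^{\naive}_{er} \to \T'(L')/\T'(L')^{\naive}_{er}$ (which exists trivially, $\T_L$ being split) is $\Gamma$-equivariant, hence commutes with the norm endomorphism $\prod_{\gamma \in \Gamma} \gamma$. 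Surjectivity of $N_{L/F}$ then forces the image of $\T(F)/\T(F)^{\naive}_r$ to land in $\T'(F')/\T'(F')^{\naive}_r$, and symmetry gives the isomorphism. This is where the strictly Henselian hypothesis is actually used; your argument does not make essential use of it beyond $\Gamma_F = I_F$, which is a warning sign.
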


The difficulty is that we cannot see an obvious way to descend this
to the non-strictly-Henselian case, without going through
congruent isomorphisms. This difficulty is what
motivates congruent isomorphisms for us, notwithstanding
their unpleasantness: indeed, if
$\tilde F/F$ is a maximal unramified extension and
$\tilde F \leftrightarrow_l \tilde F'$ ``lies over'' $F \leftrightarrow_l F'$,
then a ``Galois invariant'' isomorphism $\T(\tilde F)/\T(\tilde F)_m \rightarrow
\T'(\tilde F')/\T'(\tilde F')_m$, on taking Galois invariants, gives
us an isomorphism $\T(F)/\T(F)_m \rightarrow \T'(F')/\T'(F')_m$: 
$\T(\tilde F)_m$ has trivial $\Gal(\tilde F/F)$-cohomology, being
sort of pro-unipotent over the (perfect) residue field of $\mO_F$
(\cite[Proposition 13.8.1]{KP23}).

\subsection{Using the work of Chai and Yu}

Thus, the main question now becomes: when can a standard
isomorphism $\T(\tilde F)/\T(\tilde F)^{\naive}_r \rightarrow
\T'(\tilde F')/\T'(\tilde F')^{\naive}_r$ induce an
isomorphism
$\T(\tilde F)/\T(\tilde F)_m \rightarrow \T'(\tilde F')/\T'(\tilde F')_m$?
This seems to require a much deeper ingredient:
the spectacular work of Chai and Yu (\cite{CY01}).

Under the assumption $m + 3 h(F, \T) \lleq_{\T} l$
of Theorem \ref{thm: Ganapathy Kazhdan isomorphism},
Chai and Yu construct a canonical isomorphism
$\mathcal{T}^{\ft} \times_{\mO_F} \mO_F/\p_F^m
\rightarrow {\mathcal{T}'}^{\ft} \times_{\mO_{F'}} \mO_{F'}/\p_{F'}^m$,
where $\mathcal{T}^{\ft}$ is the finite type N\'eron model of $\T$,
and ${\mathcal{T}'}^{\ft}$ that of $\T'$. The properties that characterize
their isomorphism are implicit in their construction.
A careful examination of their construction, together with
a few arguments that are tedious but not difficult
(see Proposition \ref{pro: Chai-Yu isomorphism CY01}),
tells us that their isomorphism
$\mathcal{T}^{\ft} \times_{\mO_F} \mO_F/\p_F^m
\rightarrow {\mathcal{T}'}^{\ft} \times_{\mO_{F'}} \mO_{F'}/\p_{F'}^m$
is characterized by the fact that, upon evaluating
on $\mO_{\tilde F}/\p_{\tilde F}^m = \mO_{\tilde F'}/\p_{\tilde F'}^m$,
it is induced by a restriction of a standard isomorphism
$\T(\tilde F)/\T(\tilde F)^{\naive}_{m + h(F, \T)} \rightarrow
\T'(\tilde F')/\T'(\tilde F')^{\naive}_{m + h(F, \T)}$. This is the
key observation that lets us construct congruent (and hence also
standard) isomorphisms outside the strictly Henselian
and ``weakly induced'' cases.

Thus, more generally, we define a Chai-Yu isomorphism to be an
isomorphism $\mathcal{T}^{\ft} \times_{\mO_F} \mO_F/\p_F^m
\rightarrow {\mathcal{T}'}^{\ft} \times_{\mO_{F'}} \mO_{F'}/\p_{F'}^m$
that, when evaluated at $\mO_{\tilde F}/\p_{\tilde F}^m = \mO_{\tilde F'}/\p_{\tilde F'}^m$,
is induced by a standard isomorphism
$\T(\tilde F)/\T(\tilde F)^{\naive}_r \rightarrow
\T'(\tilde F')/\T'(\tilde F')^{\naive}_r$
for some $0 < r \lleq_{\T} l$ such that $\T(\tilde F)^{\naive}_r \subset
\T(\tilde F)_m$ and $\T'(\tilde F')^{\naive}_r \subset
\T'(\tilde F')_m$. The existence of a Chai-Yu isomorphism easily
gives the existence of a congruent isomorphism that is
tautologically compatible with the Chai-Yu isomorphism.

\subsection{The organization of the paper}
In Section \ref{sec: notation}, we define notation and recall
some material, as well as provide some simple arguments such
as an explanation as to why various results of \cite{Del84}, though
stated for ``local fields'' (quotient
fields of complete discrete valuation rings with perfect residue
field) automatically extend to the case of Henselian discretely valued fields
with perfect residue fields. This
extension is convenient because our arguments involve passage
to maximal unramified extensions, which preserves Henselian-ness
but not completeness. We also review the result of Chai and Yu
of interest to us, stating
their isomorphism and articulating the characterization implicit
in their work, in Theorem \ref{thm: CY01}.

In Section \ref{sec: standard congruent Chai-Yu}, we study standard
isomorphisms, congruent isomorphisms and Chai-Yu isomorphisms.
Much of the content of this section has been summarized above.

In Section \ref{sec: weakly induced tori}, we restrict to the
case of tori that are weakly induced in the sense of \cite{KP23}.
These tori are much simpler than general tori. In this
case, one can construct a Chai-Yu isomorphism under the milder,
natural, assumption $0 < m \lleq_{\T} l$
(Proposition \ref{pro: weakly induced tori Chai-Yu isomorphism}),
which is what lets us replace $h(F, \T)$ by $0$ in the statement
of Theorem \ref{thm: Ganapathy Kazhdan isomorphism}
for weakly induced tori.

Finally, in Section \ref{sec: putting things together},
we put things together and prove
Theorem \ref{thm: Ganapathy Kazhdan isomorphism}.

\smallskip

{\bf \noindent Acknowledgements.}
We are grateful to Radhika Ganapathy, whose work,
especially \cite{Gan15} and \cite{Gan22}, is the primary inspiration
for this paper. We also thank Tasho Kaletha, Arnaud Mayeux,
Gopal Prasad and Dipendra Prasad for helpful and educative discussions as well
as encouragement.  The recent appearance
in literature of the book \cite{KP23} proved propitious
to the writing of this paper, by allowing us to make several simplifications
and improvements.

\section{Notation and review.} \label{sec: notation}

\subsection{Discretely valued Henselian fields} \label{subsec: DVHF}

We will abbreviate ``discretely valued Henselian field" to ``DVHF''.
We will be interested  in DVHFs with perfect (but not necessarily
finite) residue fields, unlike
\cite{Del84}, which additionally imposes completeness.
This is because we will need to pass to maximal
unramified extensions $\tilde F$ of fields $F$ of interest
(see \cite[just before Corollary 2.4.6]{Ber93};
valued Henselian fields are the quasi-complete fields
of \cite[Definition 2.3.1 and Proposition 2.4.3]{Ber93}), and
doing so preserves only Henselian-ness, not completeness.

\subsubsection{Objects associated to a DVHF}
\label{subsubsec: DVHF}
For a DVHF $F$, we will denote by $\mO_F$ its ring of integers,
$\p_F \subset \mO_F$ the maximal ideal of $\mO_F$, $\kappa_F =
\mO_F/\p_F$ its residue field, and $\val_F$ the
normalized discrete valuation of $F$ as well as its own extension to
any algebraic extension of $F$. Given a
field $F$, it will often be implicitly understood that a separable closure
$F^{\sep}$ has been chosen. For a DVHF $F$, we will write $\Gamma_F$ and
$I_F$ respectively for the absolute Galois group
$\Gal(F^{\sep}/F)$ and the inertia group of $F$,
and $\Gamma_{E/F}$ for the Galois group
of any Galois extension $E/F$. Let $\Gamma_{\kappa_F} = \Gamma_F/I_F$; it
is isomorphic to the absolute Galois group of $\kappa_F$.
In case $\kappa_F$ is finite, $\Gamma_{\kappa_F}$
is topologically generated by a Frobenius element, and we will write
$W_F \subset \Gamma_F$ for the Weil group, namely the inverse image
in $\Gamma_F$ of the subgroup of $\Gamma_{\kappa_F}$ abstractly generated the
Frobenius element. Given a DVHF $F$ with perfect residue field,
we will often be interested in separable finitely ramified extensions $E/F$,
which may not be finite or Galois. In such a situation, we will use without
further comment that $E$ is also a DVHF with perfect residue field
(thus, $\kappa_E$ is  algebraically closed if $E$ contains a maximal unramified
extension of $F$), and denote by $e(E/F)$ the associated ramification degree.

\subsubsection{Passage to completion} \label{subsubsec: completion}
Write $\hat F$ for the completion of any DVHF $F$; it is a complete DVHF.
Let $F$ be a DVHF. For the following, we refer to
\cite[Proposition 2.1.6]{KP23} and \cite[Proposition 2.4.1]{Ber93}.
If $E/F$ is a finite separable extension, noting that $E$ is 
also a DVHF, the obvious map
$E \otimes_F \hat F \rightarrow \hat E$ of rings is an isomorphism.
We have an equivalence of categories between the category
$\ext F$ of finite separable extensions
of $F$ and the analogous category $\ext \hat F$,
given by $E \mapsto \hat E$, so that choosing an embedding of
$F^{\sep}$ into ${\hat F}^{\sep} := (\hat F)^{\sep}$
gives a canonical identification $\Gamma_F \rightarrow \Gamma_{\hat F}$.

\subsubsection{Ramification theory}
\label{subsubsec: ramification theory}
We assume the setting and notation from Subsubsection
\ref{subsubsec: completion} above, but assume also that $\kappa_F$
is perfect. Let $E/F$ be a finitely ramified separable extension.
For each $r \in [-1, \infty)$,
we have the ``lower ramification (equivalence) relation'' $\Xi_r$ on
$\Hom_{F-\mathrm{alg}}(E, F^{\sep})$ as in \cite[(A.3.3)]{Del84} (whose
$R_u$ is our $\Xi_r$), under which $\sigma$ and $\tau$ are equivalent
if and only if for some (or equivalently, any) finitely ramified
Galois extension $M/F$ contained in $F^{\sep}$ and containing
a normal closure of $E$,
we have $\val_M(\sigma(x) - \tau(x)) \geq e(M/E) (r + 1)$ for all
$x \in \mO_E$. When $E = M$ is Galois, the lower ramification
subgroup $\Gal(E/F)_r \subset \Gal(E/F)$ is the equivalence class
of the identity element under $\Xi_r$, which is then just ``lies in the
same $\Gal(E/F)_r$-coset''.

Choosing any $M$ as above, $\Gal(M/F)$ has a well-defined
transitive action on the set of $\Xi_r$-equivalence classes, so that
they all have the same cardinality. This cardinality
is easily seen to be bounded above
by $e(E/F)$ for $r > -1$: if $\sigma, \tau \in \Hom_{F-\mathrm{alg}}(E, F^{\sep})$
belong to the same class under $\Xi_r$ with $r > -1$, it is an easy exercise
to see that they agree on the maximal
unramified subextension of $E/F$.
For $r > -1$, let $1 \leq g_r \leq e(E/F)$ be the
cardinality of each $\Xi_r$-equivalence class (our $g_r$
is the $r_u$ of \cite{Del84}). This also lets us define the Herbrand function
associated to each finitely ramified separable extension
$E/F$ by the familiar integral as in \cite[(A.4.3)]{Del84},
for $r \in  [0, \infty)$:
\begin{equation} \label{eqn: Herbrand function}
e(E/F)^{-1} r \leq \varphi_{E/F}(r) = \int_0^r dt/(g_0/g_t) \leq r.
\end{equation}
$\varphi_{E/F}$ is a piecewise linear
self-homeomorphism of $[0, \infty)$. In fact, one can show that this also
defines a self-homeomorphism of $[-1, \infty)$, but we will only
be interested in its values on $[0, \infty)$.
Let its inverse be $\psi_{E/F}$, another self-homeomorphism of $[0, \infty)$.

The following allow us to reduce the study of
$\varphi_{E/F}$ and $\psi_{E/F}$ to the case
where $E/F$ is finite and $F$ is complete:
\begin{itemize}
\item For any subextension $E_{\circ}/F$ of $E/F$ with
$e(E_{\circ}/F) = e(E/F)$ --- note that
there exist finite such $E_{\circ}/F$ ---
we have $\varphi_{E/F} = \varphi_{E_{\circ}/F}$
and $\psi_{E/F} = \psi_{E_{\circ}/F}$.
To see this, make use of the same argument that was
used above to prove that $g_r \leq e(E/F)$ for $r > -1$, to see that
the value of $g_r$ associated to $E/F$ equals
that associated to $E_{\circ}/F$.

\item If $E/F$ is finite, it is easy to see that the identification
\[\Hom_{F-\mathrm{alg}}(E, F^{\sep}) \rightarrow
\Hom_{\hat F-\mathrm{alg}}(\hat E, (\hat F)^{\sep})\]
respects each of the equivalence
relations $\Xi_r$, so that $\varphi_{E/F} = \varphi_{\hat E/\hat F}$
and $\psi_{E/F} = \psi_{\hat E/\hat F}$
as functions $[0, \infty) \rightarrow [0, \infty)$.
\end{itemize}

We claim that for finitely ramified separable extensions
$E_2/E_1/F$, we have
$\varphi_{E_2/F} = \varphi_{E_1/F} \circ \varphi_{E_2/E_1}$,
or equivalently $\psi_{E_2/F} = \psi_{E_2/E_1} \circ \psi_{E_1/F}$.
To see this, reduce using the arguments above
to the case where $E_2/F$ is finite, and then to the case where
$F, E_1$ and $E_2$ are complete, then to the case where
$E_2/F$ is Galois, and use \cite[(A.4.1) and Proposition A.4.2]{Del84}.

For $E/F$ finitely ramified separable, define
the ``upper ramification relations'' $\Xi^r := \Xi_{\psi_{E/F}(r)}$.
If $E/F$ is Galois, this is the
``belongs to the same coset'' relation for the upper ramification subgroup
$\Gal(E/F)^r := \Gal(E/F)_{\psi_{E/F}(r)} \subset \Gal(E/F)$.

When $E/F$ is finite, it is easy to check that
$\Hom_{F-\mathrm{alg}}(E, F^{\sep}) \rightarrow
\Hom_{\hat F-\mathrm{alg}}(\hat E, (\hat F)^{\sep})$
preserves the ``lower ramification relations'' $\Xi_r$, and  hence
(using $\psi_{E/F} = \psi_{\hat E/\hat F}$) also the
``upper ramification relations'' $\Xi^r$. The $\Xi^r$ have the following
advantage: if $E/F$ is a finite extension and $M/F$ is a finite Galois
extension in $F^{\sep}$ containing a normal closure of $E$,
then $\Xi^r$ is the same as ``lies in the same
$\Gal(M/F)^r$-orbit'': to see this,
pass to completion and use \cite[(A.3.2) and the last sentence
of A.4]{Del84}. This nice behavior under quotients lets us
give $\Gamma_F$ an upper ramification filtration $\{I_F^r\}_{r \geq 0}$:
by definition, the upper ramification filtration subgroup
$I_F^r \subset \Gamma_F$
is the subgroup of elements that map to $\Gal(M/F)^r$ for each
finite Galois subextension $M/F$ of $F^{\sep}/F$. Each such
map $I_F^r \rightarrow \Gal(M/F)^r$ is
then seen to be surjective. The isomorphism
$\Gamma_F \rightarrow \Gamma_{\hat F}$ maps
$I_F^r$ to $I_{\hat F}^r$ and quotients to
an isomorphism $\Gamma_F/I_F^r \rightarrow \Gamma_{\hat F}/I_{\hat F}^r$,
for each $r \geq 0$. The objects associated to $F$
that we have defined above ($\psi_{E/F}, I_F^r$ etc.) are intrinsic, and
their definitions did not make use of the embedding $F^{\sep} \hookrightarrow
{\hat F}^{\sep}$.

\subsubsection{At most $l$-ramified extensions}
\label{subsubsec: at most l ramified extensions}
Let $F$ be a DVHF with
perfect residue field, and $l$ a nonnegative integer.
A separable (algebraic) extension $E/F$ will be called
at most $l$-ramified if for every finite subextension $E_{\circ}/F$
of it, the relation $\Xi^l$ associated to $E_{\circ}/F$ is trivial.
Using \cite[Proposition A.6.1]{Del84}, this can be shown to be
equivalent to requiring
that $I_F^l$ fixes any $F$-algebra embedding $E \hookrightarrow F^{\sep}$. 
Let $(\ext F)^l$ denote the category of finite at most
$l$-ramified (and hence separable by definition) extensions of $F$.
The discussion in the previous subsubsection
implies that the functor
$E \mapsto \hat E$ from Subsubsection \ref{subsubsec: completion}
induces an equivalence of categories $(\ext F)^l \rightarrow (\ext \hat F)^l$.
The category of 
the ind-objects of  $(\ext F)^l$ is equivalent to the category of the algebraic
(not necessarily finite or finitely ramified)
at most $l$-ramified extensions of $F$.

\begin{remark} \label{rmk: at most l ramified extensions}
Here are some properties of a finitely ramified
at most $l$-ramified extension $E/F$, that will be used
without further comment in what follows:
\begin{enumerate}[(i)]
\item \label{l(1)} $l(1) := \psi_{E/F}(l)$ is an integer,
equal to $e(E/F) (l + 1) - \val_F(\text{the different of $E/F$}) - 1$
if $E/F$ is finite (reduce to the case of $E/F$ finite,
and see around \cite[Proposition A.6.1]{Del84};
this uses that $E/F$ is at most $l$-ramified). Note also
that $l \leq l(1) \leq l e(E/F)$, with the latter equality
holding if and only if $E/F$ is tamely ramified
(use, e.g., \eqref{eqn: Herbrand function} and the previous sentence).

\item \label{IFl IEl(1)} $I_F^l = I_E^{l(1)}$: reduce to the case where $E/F$ is
finite, and note that for any finite Galois extension $M/E$
we have $\Gal(M/F)^l \subset \Gal(M/E)$, and then: 
\[ \Gal(M/F)^l = \Gal(M/E) \cap \Gal(M/F)_{\psi_{M/F}(l)}
= \Gal(M/E)_{\psi_{M/E} \circ \psi_{E/F}(l)}
= \Gal(M/E)^{\psi_{E/F}(l)}.
\]
\end{enumerate}
\end{remark}

\subsection{The Krasner-Deligne theory (\cite{Del84})}

\subsubsection{Deligne's triples}
\label{subsubsec: Trl}
In Subsubsections \ref{subsubsec: Trl F}
and \ref{subsubsec: close local fields} below, we will
write $\mathfrak{T}$ for the category of triples
$(A, \m, \epsilon)$ as in \cite[Sections 1.1 and 1.4]{Del84}: $A$ is
a truncated discrete valuation ring with perfect residue field,
$\m$ is a free $A$-module of rank $1$, and $\epsilon \colon \m \rightarrow A$
is an $A$-module epimorphism from $\m$ to the maximal ideal of $A$.
For an object $S = (A, \m, \epsilon)$ in this category, $l(S)$ will denote
the length of $A$ as an $A$-module.

For each object $S = (A, \m, \epsilon)$ of $\mathfrak{T}$ of length say $l$,
Deligne has defined a category $(\ext S)^l$ in
\cite[Definition 2.7]{Del84}, which we will refer to in this subsection.
Its objects are the ``finite flat'' objects over $S$
in $\mathfrak{T}$ satisfying an ``at most $l$-ramified'' condition,
but its morphisms are only certain equivalence
classes of morphisms in $\mathfrak{T}$.

\subsubsection{Deligne's triple $\Tr_l F$ associated to $F$}
\label{subsubsec: Trl F}
If $F$ is a DVHF with perfect residue field,
and $l$ a positive integer, we will
write $\Tr_l F$ for the object $(\mO_F/\p_F^l, \p_F/\p_F^{l + 1},
\epsilon)$ in $\mathfrak{T}$, where
$\epsilon \colon \p_F/\p_F^{l + 1} \rightarrow \mO_F/\p_F^l$ is induced
by the inclusion $\p_F \subset \mO_F$.

\cite[Theorem 2.8]{Del84} says that if $F$ is complete, then
$\Tr$ extends to a well-defined functor from $(\ext F)^l$
to the category $(\ext \Tr_l F)^l$ (see Subsubsection \ref{subsubsec: Trl}),
which is in fact an equivalence of categories.
However, even when $F$ is not complete, using the equivalence of categories
$(\ext F)^l \rightarrow (\ext \hat F)^l$
(Subsubsection \ref{subsubsec: at most l ramified extensions})
and the tautological isomorphism $\Tr_l F \rightarrow \Tr_l \hat F$,
we formally get an equivalence of categories $(\ext F)^l \rightarrow
(\ext \Tr_l F)^l$. Moreover, 
it is immediate that this equivalence of categories has an intrinsic description
independent of $\hat F$, exactly as the one used for
$\hat F$ in \cite[Theorem 2.8]{Del84}. In particular, any isomorphism
$\Tr_l F \rightarrow \Tr_l F'$ determines an equivalence
of categories $(\ext F)^l \rightarrow (\ext F')^l$; this takes
each $E$ in $(\ext F)^l$ to some $E'$ in $(\ext F')^l$
such that we have an isomorphism
$\Tr_{e l} E = \Tr_{e l} E'$ in the category
$(\ext \Tr_l F)^l = (\ext \Tr_l F')^l$, where $e = e(E/F) = e(E'/F')$.

\subsubsection{Close local fields} \label{subsubsec: close local fields}
The notation $F \leftrightarrow_l F'$ will mean not only that
$F$ and $F'$ are DVHFs with perfect residue fields that are
$l$-close in the sense that
we have an isomorphism $\mO_F/\p_F^l \cong \mO_{F'}/\p_{F'}^l$ of rings,
but also that the following additional data have been chosen:
\begin{itemize}
\item An identification $\Tr_l F = \Tr_l F'$ has been chosen in $\mathfrak{T}$,
as also an equivalence of categories
$U  \colon (\ext F)^l \rightarrow (\ext F')^l$ as in
Subsubsection \ref{subsubsec: Trl F} above,
which then as in \cite[Section 3.5]{Del84},
determines the inner class of an isomorphism
$\Gamma_F/I_F^l \cong \Gamma_{F'}/I_{F'}^l$.

\item An identification $\Gamma_F/I_F^l = \Gamma_{F'}/I_{F'}^l$
from the inner class mentioned above has been
chosen by means of a choice of a fixed
isomorphism $U((F^{\sep})^{I_F^l}) \rightarrow ({F'}^{\sep})^{I_{F'}^l}$
over $F'$, as follows: 
\[
\Gamma_F/I_F^l = \Aut_{F-\mathrm{alg}}(({F}^{\sep})^{I_F^l})
\overset{U}{\rightarrow} \Aut_{F'-\mathrm{alg}}(U((F^{\sep})^{I_F^l}))
= \Aut_{F'-\mathrm{alg}}(({F'}^{\sep})^{I_{F'}^l}) = \Gamma_{F'}/I_{F'}^l,
\]
where $U\colon (\ext F)^l \cong (\ext F')^l$ is now 
extended to the level of the ind-objects.

We refer to \cite[Section 3.5, especially Section 3.5(c)]{Del84}
for some of the details, which do not need the assumption that $F$ is
complete.
\end{itemize}
This involves choices of $F^{\sep}$ and ${F'}^{\sep}$
among other things, changing which will change associated objects
in an appropriate sense, e.g., up to an inner automorphism for Galois groups.

\subsubsection{A variation} \label{subsubsec: close local fields variation}
Given $F \leftrightarrow_l F'$, it will be helpful to consider the
following variant of $U$. Let $(\ext F)^{l, +}$ be the category of
embeddings $E \hookrightarrow F^{\sep}$, where $E/F$ is
finitely ramified and at most $l$-ramified. Similarly, we have
$(\ext F')^{l, +}$. These categories have the following advantage which
is important for us: between any two objects in them is either
a unique morphism, or none at all, depending on whether the stabilizer
of the former in $\Gamma_F$ or $\Gamma_{F'}$
contains that of the latter. Clearly, $U$, considered at the
level of ind-objects, defines an equivalence of categories
$U^+ \colon(\ext F)^{l, +} \rightarrow (\ext F')^{l, +}$; it sends
$E \hookrightarrow F^{\sep}$ to
$U(E) \hookrightarrow U((F^{\sep})^{I_F^l}) \rightarrow
({F'}^{\sep})^{I_{F'}^l} \hookrightarrow {F'}^{\sep}$, where
$U((F^{\sep})^{I_F^l}) \rightarrow ({F'}^{\sep})^{I_{F'}^l}$ is part
of the datum $F \leftrightarrow_l F'$.
Moreover, it is an easy exercise to describe when
$E' \hookrightarrow {F'}^{\sep}$ is isomorphic in
$(\ext F')^{l, +}$ to $U^+(E \hookrightarrow F^{\sep})$:
this is so if and only if $E \hookrightarrow F^{\sep}$ and
$E' \hookrightarrow {F'}^{\sep}$ have the same stabilizer
in $\Gamma_F/I_F^l = \Gamma_{F'}/I_{F'}^l$. \\
Upshot: the datum of an extension $E'/F$ and an isomorphism
$U(E) \rightarrow E'$, is the same as that of an embedding
$E' \hookrightarrow {F'}^{\sep}$ with the same stabilizer
as $E \hookrightarrow F^{\sep}$ in $\Gamma_F/I_F^l = \Gamma_{F'}/I_{F'}^l$.

\subsubsection{Close extensions of close local fields}
\label{subsubsec: TrE}

Given $F \leftrightarrow_l F'$, we will typically need to
work with realizations $E \leftrightarrow_{l(1)} E'$ involving
finitely ramified at most $l$-ramified extensions
$E$ and $E'$ of $F$ and $F'$ with $U(E) \cong E'$,
where $l(1)= \psi_{E/F}(l)$ is an integer.
We will use without further mention that, for any such $E'$, we have
$\psi_{E/F}(l) = \psi_{E'/F'}(l)$:
use the discussion in \cite[Section 1.5.3]{Del84}
(expressing $\psi_{E/F}(l)$ in terms of truncated data).

Let us describe how a choice of an isomorphism $U(E) \rightarrow E'$
gives a realization $E \leftrightarrow_{l(1)} E'$.
As in \cite[Construction 3.4.1]{Del84}
(and using a direct limit argument to reduce to the case of finite extensions),
this choice determines an isomorphism
$\Tr_{l(1)} E \rightarrow \Tr_{l(1)} E'$;
by Subsubsection \ref{subsubsec: Trl F}
above this does not need the assumption that $F$ is complete.
Moreover, we have obvious choices of the additional data
needed to upgrade this to a realization
$E \rightarrow_{l(1)} E'$, as follows. An analogue $U_E$
of $U$ for this isomorphism can be obtained by
restricting $U$ to extensions of $E$ and thinking of
$U(E) \rightarrow E'$ as an identification, which we can also
use to choose $F^{\sep}$ and ${F'}^{\sep}$ as
algebraic closures of $E$ and $E'$, and get an
identification $U_E((F^{\sep})^{I_E^{l(1)}})
= U((F^{\sep})^{I_F^l}) \rightarrow ({F'}^{\sep})^{I_{F'}^l}
= ({F'}^{\sep})^{I_{E'}^{l(1)}}$.

\begin{remark} \label{rmk: properties E l(1) E'}
The above construction of $E \leftrightarrow_{l(1)} E'$ has the
following properties, which will be used without further mention
in what follows:
\begin{enumerate}[(i)]
\item \label{GammaE GammaF GammaE' GammaF'}
$\Gamma_E$ and $\Gamma_{E'}$, as realized in
$E \leftrightarrow_{l(1)} E'$, identify with the stabilizers of
$E \hookrightarrow F^{\sep}$ in $\Gamma_F$ and $E' \hookrightarrow {F'}^{\sep}$
in $\Gamma_{F'}$, and the resulting isomorphism
$\Gamma_E/I_E^{l(1)} \rightarrow \Gamma_{E'}/I_{E'}^{l(1)}$ is simply
the restriction of $\Gamma_F/I_F^l \rightarrow \Gamma_{F'}/I_{F'}^l$.
\item \label{GammaEF GammaE'F'} We get a bijection
\begin{equation} \label{eqn: GammaEF GammaE'F'}
\Gamma_{E/F} \rightarrow (\Gamma_F/I_F^l)/(\Gamma_E/I_E^{l(1)}) \rightarrow
(\Gamma_{F'}/I_{F'}^l)/(\Gamma_{E'}/I_{E'}^{l(1)}) \rightarrow
\Gamma_{E'/F'},
\end{equation}
which is an isomorphism of groups if $E/F$ is Galois,
in which case these groups act compatibly on $\Tr_{l(1)} E = \Tr_{l(1)} E'$
over $\Tr_l F = \Tr_l F'$.
\item \label{extensions transitivity}
Suppose $L/F$ is a finitely ramified at most $l$-ramified
extension `containing' $E/F$. Then any extension $L'/F'$ together with
an isomorphism $U(L) \rightarrow L'$ determines an extension $L'/E'$
(via $E' \rightarrow U(E) \rightarrow U(L) \rightarrow L'$),
together with an isomorphism $U_E(L) = U(L) \rightarrow L'$,  and vice versa.
Given any such $L'$ and $U(L) \rightarrow L'$,
it is easily verified that the realization $L \leftrightarrow_{\psi_{L/F}(l)}
L'$, obtained using the above construction starting from
$F \leftrightarrow_l F'$ and $U(L) \rightarrow  L'$,
is the same as the realization
$L \leftrightarrow_{\psi_{L/E}(\psi_{E/F}(l))} L'$, obtained using
the above construction starting from $E \leftrightarrow_{\psi_{E/F}(l)} E'$
and $U_E(L) = U(L) \rightarrow L'$.
\end{enumerate}
\end{remark}

However, it seems inconvenient to keep track of fixed isomorphisms
$U(E) \rightarrow E'$, or even to keep referring to $U$.
This is why we had Subsubsection
\ref{subsubsec: close local fields variation}: the discussion
there shows that, given $E \hookrightarrow F^{\sep}$ in
$(\ext F)^{l, +}$, any $E' \hookrightarrow {F'}^{\sep}$ with
the same stabilizer as it in $\Gamma_F/I_F^l = \Gamma_{F'}/I_{F'}^l$
determines a unique isomorphism $U(E) \rightarrow E'$, and
hence a realization $E \leftrightarrow_{l(1)} E'$.

\begin{notn} \label{notn: compatible embeddings}
If a realization $F \leftrightarrow_l F'$ is understood, and
we talk of compatible embeddings $E \hookrightarrow F^{\sep}$
and $E' \hookrightarrow {F'}^{\sep}$, we will mean that
$E/F$ and $E'/F'$ are finitely ramified at most $l$-ramified
extensions, and that $E \hookrightarrow F^{\sep}$
and $E' \hookrightarrow {F'}^{\sep}$ have the same stabilizer
in $\Gamma_F/I_F^l = \Gamma_{F'}/I_{F'}^l$. Thus, the compatible
embeddings $E \hookrightarrow F^{\sep}$
and $E' \hookrightarrow {F'}^{\sep}$ give a realization
$E \leftrightarrow_{l(1)} E'$, where $l(1) = \psi_{E/F}(l)$,
``lying over'' $F \leftrightarrow_l F'$.
\end{notn}

\begin{remark} \label{rmk: properties E l(1) E' rephrasing}
By the above discussion above, Remark \ref{rmk: properties E l(1) E'}
can be stated in terms of compatible embeddings, with, in particular,
Remark \ref{rmk: properties E l(1) E'}\eqref{extensions transitivity}
taking the following shape (with a slight change of notation):
if $L_1/L_2/F$ are finitely ramified at most
$l$-ramified extensions, and 
$L_i \hookrightarrow F^{\sep}$ and $L_i' \hookrightarrow {F'}^{\sep}$
are compatible embeddings for $i = 1, 2$, then $L_1'
\hookrightarrow {F'}^{\sep}$ is the composite of
$L_2' \hookrightarrow {F'}^{\sep}$ with an embedding
$L_1' \hookrightarrow L_2'$.
Moreover, with $l_1 = \psi_{L_1/F}(l)$ and
$l_2 = \psi_{L_2/F}(l) = \psi_{L_2/L_1}(l_1)$, 
the realization $L_2 \leftrightarrow_{l_2} L_2'$
produced from $F \leftrightarrow_l F'$ is the same as
the $L_2 \leftrightarrow_{\psi_{L_2/L_1}(l_1)} L_2'$
produced from the $L_1 \leftrightarrow_{l_1} L_1'$ in
turn produced from $F \leftrightarrow_l F'$.
\end{remark}
\subsubsection{Relating the multiplicative groups of close local fields}
\label{subsubsec: close local fields multiplicative groups}
Suppose that $F \leftrightarrow_l F'$, and that we have embeddings
$L_1 \hookrightarrow L_2 \hookrightarrow F^{\sep}$, with
$L_1/F$ and $L_2/F$ finitely ramified and at most $l$-ramified.
Assume that for $i = 1, 2$,
$L_i \hookrightarrow F^{\sep}$ and $L_i' \hookrightarrow {F'}^{\sep}$
are compatible embeddings, so that $L_1' \hookrightarrow {F'}^{\sep}$
factors through $L_2' \hookrightarrow {F'}^{\sep}$
(Remark \ref{rmk: properties E l(1) E' rephrasing}). Set
$l_i = \psi_{L_i/F}(l)$ for $i = 1, 2$.
The inclusion $L_1^{\times} \hookrightarrow L_2^{\times}$ induces
$L_1^{\times}/(1 + \p_{L_1}^{l_1}) \rightarrow
L_2^{\times}/(1 + \p_{L_2}^{l_2})$, because $l_2 =
\psi_{L_2/L_1}(l_1) \leq l_1 e(L_2/L_1)$,
by Remark \ref{rmk: at most l ramified extensions}\eqref{l(1)}.
Part of the datum defining the map $\Tr_{l_1} L_1 \rightarrow \Tr_{l_2} L_2$
(as in \cite[Section 1.4]{Del84}) is an $\mO_{L_1}/\p_{L_1}^{l_1}$-linear map
$\p_{L_1}/\p_{L_1}^{l_1 + 1}
\rightarrow (\p_{L_2}/\p_{L_2}^{l_2 + 1})^{\otimes e(L_2/L_1)}$ that
sends a generator of the source to a generator of the target as an
$\mO_{L_2}/\p_{L_2}^{l_2}$-module.
The description of $L_i^{\times}/(1 + \p_{L_i}^{l_i})$ ($i = 1, 2$) in terms
of $\Tr_{l_i} L_i$, given in \cite[Section 1.2]{Del84} 
as the group of homogeneous units of	
the graded $\mO_{L_i}/\p_{L_i}^{l_i}$-algebra
$\bigoplus_{n \in \ZZ} (\p_{L_i}/\p_{L_i}^{l_i + 1})^{\otimes n}$,
implies that the map $L_1^{\times}/(1 + \p_{L_1}^{l_1}) \rightarrow
L_2^{\times}/(1 + \p_{L_2}^{l_2})$ can be described
in terms of the extension $\Tr_{l_1} L_1 \rightarrow \Tr_{l_2} L_2$,
as obtained by putting together the various
$(\p_{L_1}/\p_{L_1}^{l_1 + 1})^{\otimes n}
\rightarrow (\p_{L_2}/\p_{L_2}^{l_2 + 1})^{\otimes e(L_2/L_1) n}$
(this does not use the completeness of $F$ or of the $L_i$).
Using the isomorphism $\Tr_{l_i} L_i \rightarrow \Tr_{l_i} L_i'$,
we get an isomorphism $L_i^{\times}/(1 + \p_{L_i}^{l_i}) \rightarrow
{L_i'}^{\times}/(1 + \p_{L_i'}^{l_i})$.
Now it is clear that, whenever $F \leftrightarrow_l F'$ and we
have $L_1 \leftrightarrow_{l_1} L_1'$ and
$L_2 \leftrightarrow_{l_2} L_2'$ as above,
we have an obvious commutative diagram:
\[
\xymatrix{
F^{\times}/(1 + \p_F^l) \ar[r] \ar[d] & L_1^{\times}/(1 + \p_{L_1}^{l_1})
\ar[r] \ar[d] & L_2^{\times}/(1 + \p_{L_2}^{l_2}) \ar[d] \\
{F'}^{\times}/(1 + \p_{F'}^l) \ar[r] & L_1'^{\times}/(1 + \p_{L_1'}^{l_1})
\ar[r] & {L_2'}^{\times}/(1 + \p_{L_2'}^{l_2})
}.
\]
By the discussion at the end of Subsubsection \ref{subsubsec: TrE},
if some $L_i/F$ is Galois, then the vertical arrow
in the above diagram involving $L_i$
is invariant under $\Gamma_{L_i/F} = \Gamma_{L_i'/F'}$.

\begin{notn} \label{notn: lleq L}
Let $F$ be a DVHF with perfect residue field,
and $L/F$ be an algebraic extension. For $r, l > 0$, we say that
$r \lleq_L l$ (or $l \ggeq_L r$),
if $L/F$ is finitely ramified, at most $l$-ramified,
and satisfies that $r \leq \psi_{L/F}(l)/e(L/F)$
(usually $l$ will be an integer for our purposes).
\end{notn}

Note that, if $F \leftrightarrow_l F'$,
$r \lleq_L l$, and $L \hookrightarrow F^{\sep}$
and $L' \hookrightarrow {F'}^{\sep}$ are
compatible extensions, then with $l(1)=\psi_{E/F}(l)$, the isomorphisms
$\mO_L/\p_L^{l(1)} \rightarrow \mO_{L'}/\p_{L'}^{l(1)}$ and
$L^{\times}/(1 + \p_L^{l(1)}) \rightarrow {L'}^{\times}/(1 + \p_{L'}^{l(1)})$
induce isomorphisms $\mO_L/\p_L^{\lceil e(L/F) r \rceil} \mO_L
\rightarrow \mO_{L'}/\p_{L'}^{\lceil e(L/F) r \rceil} \mO_{L'}$
and $L^{\times}/(1 + \p_L^{\lceil e(L/F) r \rceil})
\rightarrow {L'}^{\times}/(1 + \p_{L'}^{\lceil e(L/F) r \rceil})$.

\subsection{Notation related to tori over DVHFs}

\begin{notn} \label{notn: close local fields tori}
\begin{enumerate}[(i)]
\item Henceforth we will subscripting to indicate base-change: the
base-change of a scheme $\X/S$ to $S'$ will be denoted by $\X_{S'}$.

\item \label{character lattice}
For any torus $\T$ over a field $F$ with a chosen separable
closure $F^{\sep}$, we will denote by $X^*(\T)$ and $X_*(\T)$
respectively the character lattice and the cocharacter lattice
of the base-change $\T_{F^{\sep}}$ of $\T$ to $F^{\sep}$,
viewed with the obvious action of $\Gal(F^{\sep}/F)$ on these.
If an embedding $E \hookrightarrow F^{\sep}$ is understood from the context,
where $E/F$ a separable extension splitting $\T$, we may use it to view
each $\chi \in X^*(\T)$ as a homomorphism $\T_E \rightarrow \GG_m/E$,
$\chi(t)$ as an element of $E^{\times}$ for
$\chi \in X^*(\T)$ and $t \in \T(E)$, etc.

\item If $\T$ is a torus over a DVHF $F$ with perfect residue field,
its ft-N\'eron model and connected N\'eron model
(see \cite[Definition B.8.9]{KP23}) will be
denoted by $\mathcal{T}^{\ft}$ and $\mathcal{T}$, respectively
(thus, we follow \cite{Gan22} in writing $\mathcal{T}$
for the $\mathcal{T}^0$ of \cite{KP23}).

\item If $\T$ is a torus over a DVHF $F$ with perfect residue field,
then $\T(F)_b \subset \T(F)$ will denote its maximal bounded subgroup;
thus, $\mathcal{T}^{\ft}(\mO_F)$ identifies with $\T(F)_b$.

\item \label{minimal congruent filtration} Let $\T$ be a torus over a
DVHF $F$ with perfect residue field. We will consider three filtrations
of $\T(F)$:
\[\{\T(F)^{\naive}_r\}_{r \geq 0},\quad\{\T(F)^{\std}_r\}_{r \geq 0} \quad\text{and} \quad\{\T(F)_r\}_{r \geq 0}.\]
For $\{\T(F)^{\naive}_r\}_{r \geq 0}$ (defined in \eqref{eqn:naive}), we 
have for each $r \geq 0$: 
\begin{equation} \label{eqn:naive-filtration}
\T(F)^{\naive}_r = \{t \in \T(F)_b \mid \val_F(\chi(t) - 1) \geq r, \; \forall \,
\chi \in X^*(\T)\}
\end{equation}
(here, each $\chi(t)$ is valued in $F^{\sep}$, and we recall that
the normalized valuation $\val_F$ is canonically extended to algebraic
extensions of $F$). The filtration $\{\T(F)^{\std}_r\}_{r \geq 0}$, is the standard or Moy-Prasad
filtration of $\T(F)$ (see \cite[Definition B.5.1]{KP23}): 
\begin{equation} \label{eqn:standard-filtration}
\T(F)^{\std}_r = \{t \in \T(F)^0 \mid \val_F(\chi(t) - 1) \geq r, \; \forall \,
\chi \in X^*(\T)\},
\end{equation}
where $\T(F)^0$ is the Iwahori subgroup of $\T(F)$ as defined in \cite[Definition~2.5.13]{KP23}, 
a subgroup of finite index of $\T(F)_b$ (which can be strictly contained in $\T(F)_b$).
The filtration $\{\T(F)_r\}_{r \geq 0}$ is the minimal congruent filtration of $\T(F)$,
originally introduced by Yu in \cite{Yu15},
but interpreted as in \cite[Section B.10]{KP23}. 
We have $\T(F)^{\std}_r = \T(F)_0 \cap \T(F)^{\naive}_r$ for each $r \geq 0$,
and one can check that $\T(F)^{\naive}_0 = \T(F)_b$.
If $r = m$ is an integer, then $\T(F)_m$ is the group of $\mO_F$-points of what
is defined in \cite[Definition A.5.12]{KP23} as the $m$-th congruence
subgroup scheme of $\mathcal{T}$
(this is what the ``congruent'' of ``minimal congruent filtration'' refers to).
By \cite[Remark A.5.14]{KP23}, it also equals
$\ker(\mathcal{T}(\mO_F) \rightarrow \mathcal{T}(\mO_F/\p_F^m))$,
and hence is what \cite{Gan22} denotes as $T_m$.

\item \label{filtration subgroups strange} In the setting of
\eqref{minimal congruent filtration} above, if $E/F$ is a finitely
ramified separable algebraic extension,
we set $\T(E)^{\naive}_r := \T_E(E)^{\naive}_r$.
In slight contrast, if $E/F$ is unramified (algebraic), we let $\T(E)_r$
be $\mathcal{T}_r(\mO_E)$, where $\mathcal{T}_r$ is the $r$-th minimal
congruence filtration group scheme of $\T$ 
from \cite[Definition B.10.8(3)]{KP23} (thus, $\T(E)_r \cap \T(F)
= \T(F)_r$, but we do not know if $\T(E)_r$ equals
$\T_E(E)_r$).

\item \label{lleq T}
Let $\T$ be a torus over a DVHF $F$ with perfect residue field, and
let $r, l > 0$.
We say that $r \lleq_{\T} l$ (or $l \ggeq_{\T} r$), if
there exists a (finitely ramified at most $l$-ramified) extension $L/F$ that splits $\T$, such that $r \lleq_L l$
(see Notation \ref{notn: lleq L};
usually $l$ will be an integer for our purposes).

\item \label{close local fields tori}
If we write $(F, \T) \leftrightarrow_l (F', \T')$, we will
mean that $F \leftrightarrow_l F'$, and that
$\T, \T'$ are tori over $F, F'$ such that the actions of
$I_F^l$ on $X^*(\T)$ and $I_{F'}^l$ on $X^*(\T')$ are trivial
(in other words, $\T$ and $\T'$ are ``at most $l$-ramified''), and
that an isomorphism $X^*(\T) \rightarrow X^*(\T')$ has been
chosen that is equivariant for the actions
of $\Gamma_F/I_F^l = \Gamma_{F'}/I_{F'}^l$ (recall that the identification
$\Gamma_F/I_F^l = \Gamma_{F'}/I_{F'}^l$	is part of the datum
defining $F \leftrightarrow_l F'$).

\item \label{close local fields tori restriction}
Suppose $F \leftrightarrow_l F'$, and that
$E \hookrightarrow F^{\sep}$ and $E' \hookrightarrow {F'}^{\sep}$
are compatible embeddings, so (see Subsubsection \ref{subsubsec: TrE})
we have $E \leftrightarrow_{l(1)} E'$, where $l(1) = \psi_{E/F}(l)$.
Suppose further that $(F, \T) \leftrightarrow_l (F', \T')$
and $(E, \S) \leftrightarrow_{l(1)} (E', \S')$
extend our $F \leftrightarrow_l F'$ and $E \leftrightarrow_{l(1)} E'$,
respectively. Then we will implicitly work with realizations
$(E, \T_E) \leftrightarrow_{l(1)} (E', \T'_{E'})$
and $(F, \R := \Res_{E/F} \S) \leftrightarrow_l (F', \R' := \Res_{E'/F'} \S')$,
extending $E \leftrightarrow_{l(1)} E'$ and $F \leftrightarrow_l F'$,
obtained from the following identifications:
\begin{enumerate}[(a)]
\item $X^*(\T_E) = X^*(\T) \rightarrow X^*(\T') = X^*(\T'_{E'})$,
which is equivariant for $\Gamma_F/I_F^l = \Gamma_{F'}/I_{F'}^l$ and hence
for $\Gamma_E/I_E^{l(1)} = \Gamma_{E'}/I_{E'}^{l(1)}$; and
\item $X^*(\R) = \Ind_{\Gamma_E}^{\Gamma_F} X^*(\S) = 
\Ind_{\Gamma_E/I_E^{l(1)}}^{\Gamma_F/I_F^l} X^*(\S)
= \Ind_{\Gamma_{E'}/I_{E'}^{l(1)}}^{\Gamma_{F'}/I_{F'}^l} X^*(\S')
= \Ind_{\Gamma_{E'}}^{\Gamma_{F'}} X^*(\S') = X^*(\R')$,
where we recall that $\S$ and $\S'$ are at most $l(1)$-ramified,
use the discussion of
\eqref{GammaE GammaF GammaE' GammaF'} and \eqref{GammaEF GammaE'F'}
of Remark \ref{rmk: properties E l(1) E'}, and use
the canonical identifications
$X^*(\R) = \Ind_{\Gamma_E}^{\Gamma_F} X^*(\S)
= \ZZ[\Gamma_F] \otimes_{\ZZ[\Gamma_E]} X^*(\S)$
and $X^*(\R') = \Ind_{\Gamma_{E'}}^{\Gamma_{F'}} X^*(\S)$
reviewed in Remark \ref{rmk: restriction of scalars tori} below.
\end{enumerate}
\end{enumerate}
\end{notn}

As usual, the above notation will be adapted in obvious ways: for any torus
$\T_1$ over any DVHF $F^{\flat}$,
we will make sense of $\mathcal{T}^{\ft}_1$ and
$\T_1(F^{\flat})_m$, etc.

\begin{remark} \label{rmk: restriction of scalars tori}
Let $E/F$ be a finite separable field extension, $\S/E$ a
torus, and $\R := \Res_{E/F} \S$. Let us recall the canonical
realization $X^*(\R) =  \Ind_{\Gamma_E}^{\Gamma_F} X^*(\S)$, where
$\Gamma_F = \Gal(F^{\sep}/F)$ and $\Gamma_E = \Gal(F^{\sep}/E)$.
We have a ``universal'', surjective, homomorphism $\R_E \rightarrow \S_E$,
which, at the level of $A$-valued points for an $E$-algebra $A$, is
the map $\R(A) = \S(E \otimes_F A) \rightarrow \S(A)$, obtained by
applying the functor $\S$ to the multiplication map
$E \otimes_F A \rightarrow A$ of $E$-algebras (where $E \otimes_F A$
is an $E$-algebra via the first factor). This map has a well-known
universal property: for any multiplicative type group
scheme $\T$ over $F$, base-changing to $E$ followed by
composition with $\R_E \rightarrow \S_E$
gives us a functorial bijection between homomorphisms $\T \rightarrow \R$
and homomorphisms $\T_E \rightarrow \S_E$. Hence, composition with the injection
$X^*(\S) \rightarrow X^*(\R)$ dual to $\R_E \rightarrow \S_E$
gives a functorial identification
$\Hom_{\Gamma_E}(X^*(\S), X^*(\T)) \rightarrow \Hom_{\Gamma_F}(X^*(\R),
X^*(\T))$ (the notation $X^*(\T)$ extends to the case where $\T$
is a multiplicative type group scheme).
Hence Frobenius reciprocity gives an identification 
\[ X^*(\R) = \Ind_{\Gamma_E}^{\Gamma_F} X^*(\S)
= \ZZ[\Gamma_F] \otimes_{\ZZ[\Gamma_E]} X^*(\S). \]
\end{remark}

\begin{remark} \label{rmk: naive filtration}
We will often use without further comment the following nice property of
the naive filtration (Notation \ref{notn: close local fields tori}\eqref{minimal congruent filtration}): for any
injective homomorphism $\T_1 \hookrightarrow \T_2$ of tori over
a DVHF $F$ with perfect residue field, $r \geq 0$,
and a finitely ramified separable extension $E/F$,
$\T_1(F)^{\naive}_r = \T_1(F) \cap \T_2(F)^{\naive}_r
=\T_1(F) \cap \T_2(E)^{\naive}_{e(E/F) r}$.
For the first equality, use that $X^*(\T_2) \rightarrow X^*(\T_1)$
is surjective. For the second, recall that by the convention in
Notation \ref{notn: close local fields tori}\eqref{filtration subgroups strange},
each $\T_i(E)_s$ is defined using the normalized discrete valuation on $E$.
\end{remark}

\subsection{Weil restriction for tori across close local fields}

The following lemma is implicit in the last sentence
of \cite[Section 3.6]{CY01}.
\begin{lm} \label{lm: tori automorphisms}
Suppose $(F, \T) \leftrightarrow_l (F', \T')$.
Let $L \hookrightarrow F^{\sep}$ and $L' \hookrightarrow {F'}^{\sep}$
be compatible embeddings, with $L/F$ finite,
so that by Notation \ref{notn: close local fields tori}\eqref{close local fields tori restriction},
we have a realization $(F, \R) \leftrightarrow_l (F', \R')$,
where $\R = \Res_{L/F} \T_L$ and $\R' = \Res_{L'/F'} \T_{L'}'$.
Then the ``diagonal'' inclusions
$\T \hookrightarrow \R$ and $\T' \hookrightarrow \R'$
induce the same (necessarily surjective)
homomorphisms $X^*(\R) = X^*(\R') \rightarrow X^*(\T') = X^*(\T)$.
\end{lm}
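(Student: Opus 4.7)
The plan is to identify the map $X^*(\R) \rightarrow X^*(\T)$ induced by the diagonal inclusion $\iota \colon \T \hookrightarrow \R$ by a purely formal calculation, show that under the identification $X^*(\R) = \Ind_{\Gamma_L}^{\Gamma_F} X^*(\T_L)$ reviewed in Remark~\ref{rmk: restriction of scalars tori} it is the ``evaluation'' map $\sigma \otimes \chi \mapsto \sigma \cdot \chi$, and then observe that since this formula involves only the Galois actions on $X^*(\T)$, which are identified equivariantly by the datum $(F, \T) \leftrightarrow_l (F', \T')$, the corresponding map for $\T' \hookrightarrow \R'$ agrees with it.

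\textbf{Step 1.} First I would use Frobenius reciprocity to determine $\iota^*$. Under the adjunction $\Hom_F(\T, \R) = \Hom_L(\T_L, \T_L)$, the diagonal inclusion $\iota$ corresponds to the identity $\T_L \rightarrow \T_L$: on $A$-points for an $L$-algebra $A$, this follows by composing $\iota_A \colon \T(A) \rightarrow \T(L \otimes_F A)$ with the multiplication $L \otimes_F A \rightarrow A$, which returns $t$. At the level of character lattices, this translates into the statement that $\iota^*$ corresponds, under the Frobenius reciprocity identification $\Hom_{\Gamma_F}(\Ind_{\Gamma_L}^{\Gamma_F} X^*(\T_L), X^*(\T)) = \Hom_{\Gamma_L}(X^*(\T_L), X^*(\T))$, to the identity map $X^*(\T_L) = X^*(\T) \rightarrow X^*(\T)$. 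Therefore, for $\sigma \in \Gamma_F$ and $\chi \in X^*(\T_L) = X^*(\T)$,
\[
\iota^*(\sigma \otimes \chi) = \sigma \cdot \chi.
\]

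\textbf{Step 2.} Next I would spell out the analogous formula for $\iota'^* \colon X^*(\R') \rightarrow X^*(\T')$ by the same argument: $\iota'^*(\sigma' \otimes \chi') = \sigma' \cdot \chi'$ for $\sigma' \in \Gamma_{F'}$ and $\chi' \in X^*(\T')$.

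\textbf{Step 3.} Finally I would unwind the identifications. Under the identification $X^*(\R) = X^*(\R')$ supplied by Notation~\ref{notn: close local fields tori}\eqref{close local fields tori restriction} (i.e.\ $\ZZ[\Gamma_F/I_F^l] \otimes_{\ZZ[\Gamma_L/I_L^{l(1)}]} X^*(\T) = \ZZ[\Gamma_{F'}/I_{F'}^l] \otimes_{\ZZ[\Gamma_{L'}/I_{L'}^{l(1)}]} X^*(\T')$), a pure tensor $\sigma \otimes \chi$ corresponds to $\sigma' \otimes \chi'$, where $\sigma \in \Gamma_F/I_F^l$ maps to $\sigma' \in \Gamma_{F'}/I_{F'}^l$ under the identification of these quotients (which restricts appropriately on the $L$ and $L'$ sides by Remark~\ref{rmk: properties E l(1) E'}\eqref{GammaE GammaF GammaE' GammaF'}), and $\chi \in X^*(\T)$ maps to $\chi' \in X^*(\T')$ under the identification in $(F, \T) \leftrightarrow_l (F', \T')$. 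Since the latter identification is $\Gamma_F/I_F^l = \Gamma_{F'}/I_{F'}^l$-equivariant, the element $\sigma \cdot \chi$ maps to $\sigma' \cdot \chi'$, proving that $\iota^*$ and $\iota'^*$ agree under the identifications. Surjectivity of $\iota^*$ is automatic from Step~1 (the restriction to $1 \otimes X^*(\T)$ is already the identity).

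The main step is Step~1: once the formula $\iota^*(\sigma \otimes \chi) = \sigma \cdot \chi$ is established, the matching across close fields is a direct consequence of the equivariance built into the definition of $(F, \T) \leftrightarrow_l (F', \T')$.
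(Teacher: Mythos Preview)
Your proof is correct and follows essentially the same approach as the paper: both identify the diagonal inclusion as the homomorphism corresponding to the identity under the $\Res$/base-change adjunction, translate this via Frobenius reciprocity into a characterization of $\iota^*$ that depends only on the $\Gamma_F/I_F^l$-module structure of $X^*(\T)$, and then invoke the identifications in $(F,\T)\leftrightarrow_l(F',\T')$. The only difference is cosmetic: you write out the explicit formula $\iota^*(\sigma\otimes\chi)=\sigma\cdot\chi$, whereas the paper stops at the uniqueness statement from Frobenius reciprocity.
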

\begin{proof}
$L'/F'$ is also finite, so the statement makes sense.
By the universal property of $\Res_{L/F}$, the ``diagonal'' map
$\T \hookrightarrow \R$ is the
unique homomorphism that, when base-changed to $L$ and composed with
$\R_L \rightarrow \T_L$, yields the identity map $\T_L \rightarrow \T_L$.
Dually, $X^*(\R) \rightarrow X^*(\T)$ is the unique
homomorphism of $\Gamma_F$-modules that, when viewed as a homomorphism
of $\Gamma_L$-modules and pre-composed with the
``universal'' $\Gamma_L$-module homomorphism $X^*(\T) \hookrightarrow X^*(\R)$,
yields the identity. In the previous sentence, we can replace
$\Gamma_F$ by $\Gamma_F/I_F^l = \Gamma_{F'}/I_{F'}^l$
and $\Gamma_E$ by $\Gamma_E/I_E^{l(1)} = \Gamma_{E'}/I_{E'}^{l(1)}$,
where $l(1) = \psi_{L/F}(l)$. Since an analogous assertion applies
for $(F', \T')$, we are done.
\end{proof}

\subsection{A review of some results of Chai and Yu}
\label{subsec: CY01}

Unfortunately, we will need to quote from the proofs, and not just
the lemmas, of \cite{CY01}. Therefore, we summarize what we will
need from that paper in this subsection.

\begin{notn} \label{notn: h}
\begin{enumerate}[(i)]
\item \label{h} If $\T$ is a torus over a DVHF $F$ with perfect residue field,
and $L/F$ is a finite Galois extension splitting $\T$,
we will denote by $h(F, \T, L)$ the nonnegative integer
$h(\mO_F, \mO_L, \Gamma_F, X_*(\T))$ defined
as in \cite[Section 8.1, just before the lemma]{CY01}. If $L/F$ is a
minimal splitting extension for $\T$, i.e.,
isomorphic to the fixed field of the kernel of
$\Gamma_F \rightarrow \Aut(X^*(\T))$, we write $h(F, \T) = h(F, \T, L)$.
By \cite[Lemma 8.1]{CY01},
whenever $(F, \T) \leftrightarrow_l (F', \T')$,
$L \hookrightarrow F^{\sep}$ and $L' \hookrightarrow {F'}^{\sep}$
are compatible extensions, and
$h(F, \T, L) < \lfloor \psi_{L/F}(l)/e(L/F) \rfloor$, we have
$h(F, \T, L) = h(F', \T', L')$ and $h(F, \T) = h(F, \T')$:
indeed, if we set
$h = h(F, \T, L)$ and $e = e(L/F)$, then $L'/F'$ splits
$\T'$ (minimally if $L$ does), and we have an identification
$\Tr_{e (h + 1)} L \cong \Tr_{e (h + 1)} L'$ over $\Tr_{h + 1} F
\cong \Tr_{h + 1} F'$, feeding into the hypothesis
of \cite[Lemma 8.1]{CY01}.

\item \label{Chai-Yu ResLFTL} Suppose $(F, \T) \leftrightarrow_l (F', \T')$.
Let $L/F$ be an at most $l$-ramified
finite Galois extension splitting $\T$,
and assume that $L \hookrightarrow F^{\sep}$
and $L' \hookrightarrow {F'}^{\sep}$ are compatible embeddings. Recall that
we have $(F, \R := \Res_{L/F} \T_L) \leftrightarrow_l
(F', \R' := \Res_{L'/F'} \T_{L'})$
(see Notation \ref{notn: close local fields tori}\eqref{close local fields tori restriction}).
Whenever $0 < m \lleq_L l$, \cite[the proof of Proposition 8.4(ii)]{CY01}
gives us an isomorphism
\begin{equation} \label{eqn: Chai-Yu ResLFTL}
\mathcal{R}^{\ft} \times_{\mO_F} \mO_F/\p_F^m \rightarrow
{\mathcal{R}'}^{\ft} \times_{\mO_{F'}} \mO_{F'}/\p_{F'}^m,
\end{equation}
(we recall that $\mathcal{R}^{\ft}$ (resp., ${\mathcal{R}'}^{\ft}$) is the finite type N\'eron model of $\R$ (resp., $\R'$)).
Explicitly, the identification
$\mO_L/\p_F^m \mO_L = \mO_{L'}/\p_{F'}^m \mO_{L'}$
(from $L \leftrightarrow_{\psi_{L/F}(l)} L'$) allows us
to identify both sides, at the level of $A$-valued points
for an $\mO_F/\p_F^m = \mO_{F'}/\p_{F'}^m$-algebra $A$, with
\[ A \mapsto \Hom(X^*(\T), ((\mO_L/\p_F^m \mO_L) \otimes_{\mO_F} A)^{\times})
= \Hom(X^*(\T'), ((\mO_{L'}/\p_{F'}^m \mO_{L'}) \otimes_{\mO_{F'}} A)^{\times}).
\]
\end{enumerate}
\end{notn}

\begin{remark} \label{rmk: Chai-Yu ResLFTL}
Assume the setting of Notation \ref{notn: h}\eqref{Chai-Yu ResLFTL}.
The following is from \cite[Section 8.1]{CY01}.
\begin{enumerate}[(i)]
\item \label{embedding R in an affine space}
Note that $\Res_{\mO_L/\mO_F} \GG_m$ has an obvious realization
as a closed subscheme of
the affine space $\mathbb{A}^{[L : F] + 1}/\mO_F$
associated to the free $\mO_F$-module 
$\mO_L \oplus \mO_F$ of rank $[L : F] + 1$ (with some chosen basis).
Using a {\em compatible} basis of $\mO_{L'} \oplus \mO_{F'}$, we have
$\Res_{\mO_{L'}/\mO_{F'}} \GG_m \hookrightarrow \mathbb{A}^{[L : F] + 1}/\mO_{F'}$.
Note that the obvious isomorphism
$(\mathbb{A}^{[L : F] + 1}/\mO_F) \times_{\mO_F} \mO_F/\p_F^m \rightarrow
(\mathbb{A}^{[L : F] + 1}/\mO_{F'}) \times_{\mO_{F'}} \mO_{F'}/\p_{F'}^m$
restricts to the isomorphism
$\Res_{\mO_L/\mO_F} \GG_m \times_{\mO_F} \mO_F/\p_F^{m + 1}
\rightarrow \Res_{\mO_{L'}/\mO_{F'}} \GG_m \times_{\mO_{F'}} \mO_{F'}/\p_{F'}^{m + 1}$
defined as in (in fact as a special case of)
Notation \ref{notn: h}\eqref{Chai-Yu ResLFTL}.

\item Choosing bases $\{\chi_i = \chi_i'\}$ of $X^*(\T) = X^*(\T')$,
we can realize $\mathcal{R}^{\ft}$, which sends an $\mO_F$-algebra
$A$ to $\Hom(X^*(\T), (\mO_L \otimes_{\mO_F} A)^{\times})$, as a product
of copies of $\Res_{\mO_L/\mO_F} \GG_m$ indexed by $\{\chi_i = \chi_i'\}$,
giving using \eqref{embedding R in an affine space}
an embedding $\mathcal{R}^{\ft} \hookrightarrow
\mathbb{A}^{\dim \T ([L : F] + 1)}/\mO_F$, as a closed subscheme.
Similarly, we get ${\mathcal{R}'}^{\ft} \hookrightarrow
\mathbb{A}^{\dim \T ([L : F] + 1})/\mO_{F'}$. It is immediate
that the obvious isomorphism
$(\mathbb{A}^{[L : F] + 1}/\mO_F) \times_{\mO_F} \mO_F/\p_F^m \rightarrow
(\mathbb{A}^{[L : F] + 1}/\mO_{F'}) \times_{\mO_{F'}} \mO_{F'}/\p_{F'}^m$
restricts to the isomorphism
$\mathcal{R}^{\ft} \times_{\mO_F} \mO_F/\p_F^m
\rightarrow {\mathcal{R}'}^{\ft} \times_{\mO_{F'}} \mO_{F'}/\p_{F'}^m$
of Notation \ref{notn: h}\eqref{Chai-Yu ResLFTL}.
\end{enumerate}
\end{remark}

The following is one of the main results of \cite{CY01}:
\begin{thm}[Chai and Yu] \label{thm: CY01}
Let $(F, \T) \leftrightarrow_l (F', \T')$.
Suppose $m$ is a positive integer such that $m + 3 h(F, \T, L) \lleq_L l$, for 
a fixed at most $l$-ramified finite Galois extension $L/F$ splitting $\T$ (which
exists since $(F, \T) \leftrightarrow_l (F', \T')$).
Let $L \hookrightarrow F^{\sep}$ and $L' \hookrightarrow {F'}^{\sep}$
be compatible embeddings, so that
$(F, \R := \Res_{L/F} \T_L) \leftrightarrow_l
(F', \R' := \Res_{L'/F'} \T_{L'})$. Set $h = h(F, \T, L)$.
Then there exists a unique isomorphism $\mcT^{\ft} \times_{\mO_F} \mO_F/\p_F^m
\rightarrow {\mcT'}^{\ft} \times_{\mO_{F'}} \mO_{F'}/\p_{F'}^m$
satisfying the following property: for some (or equivalently, any)
compatible embeddings $\tilde F \hookrightarrow  F^{\sep}$
and $\tilde F' \hookrightarrow {F'}^{\sep}$, where
$\tilde F/F$ is a maximal unramified extension,
evaluating this isomorphism
(resp., \eqref{eqn: Chai-Yu ResLFTL}) at
$\mO_{\tilde F}/\p_{\tilde F}^m = \mO_{\tilde F'}/\p_{\tilde F'}^m$
(resp., $\mO_{\tilde F}/\p_{\tilde F}^{m + h}
= \mO_{\tilde F'}/\p_{\tilde F'}^{m + h}$)
gives us the right-most (resp., the left-most) vertical arrow of
an obvious commutative diagram
\begin{equation} \label{eqn: CY01 main result}
\xymatrix{
\R(\tilde F)_b/\R(\tilde F)_{m + h} \ar[d]
& \T(\tilde F)_b/\T(\tilde F)^{\naive}_{m + h} \ar@{_{(}->}[l] \ar[r] \ar[d]
& \T(\tilde F)_b/\T(\tilde F)_m \ar[d] \\
\R'(\tilde F')_b/\R'(\tilde F')_{m + h} &
\T'(\tilde F')_b/\T'(\tilde F')^{\naive}_{m + h}
\ar@{_{(}->}[l] \ar[r] & \T'(\tilde F')_b/\T'(\tilde F')_m
}.
\end{equation}
Here, we have identified $\mcT^{\ft}(\mO_{\tilde F})$ with
$\T(\tilde F)_b$, and hence $\mcT^{\ft}(\mO_{\tilde F}/\p_{\tilde F}^m)$
with $\T(\tilde F)_b/\T(\tilde F)_m$, etc. That the left vertical arrow
induces the middle vertical arrow, and the inclusions
$\T(\tilde F)^{\naive}_{m + h} \subset \T(\tilde F)_m$
and $\T'(\tilde F')^{\naive}_{m + h} \subset \T'(\tilde F')_m$
needed to make sense of the right horizontal arrows in the two rows,
are part of the assertion.
\end{thm}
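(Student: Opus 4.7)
The proof proposal has two parts: first invoke Chai-Yu's original existence theorem, and then verify both the asserted characterization (which is implicit in their construction) and uniqueness.

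For \emph{uniqueness}, my plan is to use smoothness of N\'eron models. Both $\mcT^{\ft} \times_{\mO_F} \mO_F/\p_F^m$ and ${\mcT'}^{\ft} \times_{\mO_{F'}} \mO_{F'}/\p_{F'}^m$ are smooth over the common ring $A := \mO_F/\p_F^m = \mO_{F'}/\p_{F'}^m$. A morphism between smooth separated schemes over $A$ is determined by its values on $A^{\mathrm{sh}}$-points, where $A^{\mathrm{sh}}$ is the strict henselization of $A$ at its closed point. But $A^{\mathrm{sh}} = \mO_{\tilde F}/\p_{\tilde F}^m = \mO_{\tilde F'}/\p_{\tilde F'}^m$, and under the identifications $\mcT^{\ft}(\mO_{\tilde F}/\p_{\tilde F}^m) = \T(\tilde F)_b/\T(\tilde F)_m$ (from smoothness of $\mcT^{\ft}/\mO_F$) and similarly for $\T'$, the value on $A^{\mathrm{sh}}$-points is precisely the right-most vertical arrow of \eqref{eqn: CY01 main result}. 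Since that arrow is forced by the characterization (the left vertical is given by \eqref{eqn: Chai-Yu ResLFTL}, the middle vertical it induces is determined, and the right horizontal surjections pin down the right vertical), the isomorphism is unique.

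For \emph{existence and the characterization}, my plan is to trace through the Chai-Yu construction. Chai-Yu realize $\mcT^{\ft}$ as the result of an iterated smoothening process applied to the schematic image of the diagonal embedding $\mcT^{\ft} \hookrightarrow \mathcal{R}^{\ft}$, working modulo $\p_F^{m+h}$, and they obtain the desired isomorphism by transporting across \eqref{eqn: Chai-Yu ResLFTL}, then descending from modulus $m+h$ to modulus $m$ at the cost of the $h = h(F, \T, L)$-many smoothening steps. To extract the characterization, I would first check the three structural facts needed to make sense of \eqref{eqn: CY01 main result}: the inclusion $\T(\tilde F)^{\naive}_{m+h} \subset \T(\tilde F)_m$ (which encodes exactly the claim that $h$ steps of smoothening suffice to absorb the discrepancy between the naive and minimal congruent filtrations, and is the content of the relevant lemmas in \cite[Section 8]{CY01}); the analogous inclusion for $\T'$; and the compatibility of the left vertical arrow with the diagonal inclusion, which is Lemma \ref{lm: tori automorphisms} applied modulo $\p_F^{m+h}$ together with the explicit $A$-valued description of \eqref{eqn: Chai-Yu ResLFTL} recorded in Notation \ref{notn: h}\eqref{Chai-Yu ResLFTL}. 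The commutativity of the left square in \eqref{eqn: CY01 main result} is then a formal consequence of these, together with the explicit formula for \eqref{eqn: Chai-Yu ResLFTL} on $A$-valued points. The right square commutes by definition of the right vertical arrow, once one observes that the middle vertical arrow carries $\T(\tilde F)^{\naive}_{m+h}/\T(\tilde F)^{\naive}_{m+h}$ into the kernel of the right horizontal surjection on the target side --- which is precisely what the Chai-Yu smoothening procedure guarantees.

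The \textbf{main obstacle} is the bookkeeping required to verify that the isomorphism produced by Chai-Yu's iterative smoothening, when evaluated on $\mO_{\tilde F}/\p_{\tilde F}^m$-points, is literally induced by \eqref{eqn: Chai-Yu ResLFTL} via passage to the quotient by $\T(\tilde F)^{\naive}_{m+h}$. This requires opening up each smoothening step and checking that it commutes with the identification \eqref{eqn: Chai-Yu ResLFTL}; concretely, at each step one shows that smoothening along a closed subscheme of $\mathcal{R}^{\ft}/\mO_F/\p_F^{m+h}$ is transported by \eqref{eqn: Chai-Yu ResLFTL} to smoothening along the corresponding closed subscheme of ${\mathcal{R}'}^{\ft}/\mO_{F'}/\p_{F'}^{m+h}$, which in turn reduces to the explicit description of \eqref{eqn: Chai-Yu ResLFTL} in terms of the identifications of truncated rings. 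This is routine but tedious, and is the content promised by Proposition \ref{pro: Chai-Yu isomorphism CY01} referenced in the introduction. Everything else (Galois equivariance for $\Gamma_{\tilde F/F}$ and independence of the choice of compatible embeddings $\tilde F \hookrightarrow F^{\sep}$, $\tilde F' \hookrightarrow {F'}^{\sep}$) follows from the canonicity of the Chai-Yu construction and the functoriality of \eqref{eqn: Chai-Yu ResLFTL} in the datum $F \leftrightarrow_l F'$.
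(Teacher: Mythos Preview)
Your proposal is correct and follows essentially the same route as the paper: invoke Chai--Yu's construction via iterated dilatation of the standard model inside $\mathcal{R}^{\ft}$, and trace through each step to extract the characterization in diagram \eqref{eqn: CY01 main result}; for uniqueness, use schematic density of $\mO_{\tilde F}$-points in the truncation of a smooth $\mO_F$-scheme (your strict henselization phrasing amounts to the same thing, though you should cite \cite[Lemma 8.5.1]{CY01} rather than assert it as a general fact about smooth schemes).

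Two minor differences are worth flagging. First, the paper explicitly reduces to the complete case before invoking \cite[Section 8]{CY01}, since that section assumes completeness; you should do the same. Second, for the existence of the middle vertical arrow (i.e., that \eqref{eqn: Chai-Yu ResLFTL} carries the image of $\T(\tilde F)_b$ to that of $\T'(\tilde F')_b$), the paper at this stage uses the scheme-level restriction to the standard model $\underline{\T}^0 \subset \mathcal{R}^{\ft}$ via \cite[Corollary 8.2.4]{CY01}, rather than Lemma \ref{lm: tori automorphisms}. Your route through Lemma \ref{lm: tori automorphisms} does eventually work --- the paper itself notes this as an ``alternate justification'' in the proof of Proposition \ref{pro: Chai-Yu isomorphism CY01} --- but it requires the standard-isomorphism machinery of Section \ref{sec: standard congruent Chai-Yu}, which is not yet available at the point where Theorem \ref{thm: CY01} is proved. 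So either reorder, or follow the paper and cite \cite[Corollary 8.2.4]{CY01} directly.
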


The above description is not present right at the point of
statement of \cite[Theorem~8.5]{CY01}, but can be assembled from
various parts of \cite{CY01}. To help the reader do so, we will give
more references later in this subsection. To this end, we now make
some preparation.

The following assertion is a special case of
\cite[Proposition 4.2]{CY01}, understood
using \cite[the proof of Proposition 8.4(iii)]{CY01}:
\begin{pro}[Chai and Yu] \label{pro: CY01 Proposition 4.2}
Let $F \leftrightarrow_l F'$. Let $\X, \X'$ be
smooth algebraic schemes over $\mO_F, \mO_{F'}$, $\W \subset \X_{\kappa_F}$
and $\W' \subset \X_{\kappa_{F'}}'$ closed smooth subschemes,
and $\Y, \Y'$ the dilatations of $\W, \W'$ on $\X, \X'$.
Assume that, for some $0 \leq m < l$, we are given an isomorphism
$\X \times_{\mO_F} \mO_F/\p_F^{m + 1} \rightarrow \X' \times_{\mO_{F'}}
\mO_{F'}/\p_{F'}^{m + 1}$ over $\mO_F/\p_F^{m + 1} = \mO_{F'}/\p_{F'}^{m + 1}$,
that on tensoring with $\kappa_F = \kappa_{F'}$ identifies $\W$ with $\W'$.
Then there is a unique isomorphism
$\Y \times_{\mO_F} \mO_F/\p_F^m \rightarrow \Y' \times_{\mO_{F'}} \mO_{F'}/\p_{F'}^m$
over $\mO_F/\p_F^m = \mO_{F'}/\p_{F'}^m$ with the following property: 
for some (or equivalently, any) compatible embeddings
$\tilde F \hookrightarrow F^{\sep}$
and $\tilde F' \hookrightarrow {F'}^{\sep}$, where
$\tilde F/F$ is a maximal unramified extension,
it maps the image of $y \in \Y(\mO_{\tilde F}) \subset \X(\mO_{\tilde F})$
in $\Y \times_{\mO_F} \mO_F/\p_F^m$
(by which we mean its image in $\Y(\mO_{\tilde F}/\p_{\tilde F}^m)$)
to that of $y' \in \Y'(\mO_{\tilde F'}) \subset \X(\mO_{\tilde  F'})$
in $\Y' \times_{\mO_{\tilde F'}} \mO_{\tilde F'}/\p_{\tilde F'}^m$ whenever
$\X \times_{\mO_F} \mO_F/\p_F^{m + 1} \rightarrow \X' \times_{\mO_{F'}}
\mO_{F'}/\p_{F'}^{m + 1}$ does so.
\end{pro}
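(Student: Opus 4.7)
The plan is to work Zariski-locally using the explicit affine-blow-up description of the dilatation, verify the $\mO_{\tilde F}$-point compatibility by direct coordinate computation, and then deduce uniqueness (and hence also local-to-global gluing) from the density of $\mO_{\tilde F}$-points on the smooth scheme $\Y$. The case $m = 0$ is trivial, so assume $m \geq 1$. Cover $\X$ by affine opens $U = \spec A$; on each $U$ meeting $\W$, the smoothness of $\W$ inside $\X$ lets one pick functions $f_1, \dots, f_r \in A$ such that $(f_1, \dots, f_r, \varpi_F)$ is a regular sequence cutting out $\W \cap U_{\kappa_F}$, giving the smooth affine model $\Y|_U = \spec A[T_1, \dots, T_r]/(\varpi_F T_i - f_i)_{1 \leq i \leq r}$. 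On $\X \setminus \W$ the dilatation equals $\X$, so mod-$\varpi^m$ reduction of the given mod-$\varpi^{m+1}$ isomorphism handles that locus.

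For the local construction on a patch $U$ meeting $\W$, the given isomorphism $\X \times_{\mO_F} \mO_F/\p_F^{m+1} \rightarrow \X' \times_{\mO_{F'}} \mO_{F'}/\p_{F'}^{m+1}$ transports $U$ to an affine open $U' = \spec A' \subset \X'$ (after possible shrinking) and the classes of the $f_i$ in $A/\varpi^{m+1}$ to classes in $A'/\varpi^{m+1}$; choose any lifts $f_i' \in A'$. The matching of $\W$ with $\W'$ on special fibres ensures that $(f_i', \varpi_{F'})$ is a regular sequence cutting out $\W' \cap U'_{\kappa_{F'}}$, whence $\Y'|_{U'} = \spec A'[T_i]/(\varpi_{F'} T_i - f_i')_{1 \leq i \leq r}$. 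Modulo $\p_F^m = \p_{F'}^m$, the ring $(A/\varpi^m)[T_i]/(\varpi_F T_i - f_i)$ depends only on the data $(A/\varpi^{m+1}, f_i \bmod \varpi^{m+1})$: modifying a lift $f_i'$ by $\varpi^{m+1} A'$ changes the defining ideal by a multiple of $\varpi^{m+1}$, which is zero modulo $\varpi^m$. Hence this datum is canonically identified with its $(A', f_i')$-counterpart, yielding the desired local isomorphism $\Y|_U \times_{\mO_F} \mO_F/\p_F^m \rightarrow \Y'|_{U'} \times_{\mO_{F'}} \mO_{F'}/\p_{F'}^m$.

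For the $\mO_{\tilde F}$-point compatibility, take $y \in \Y(\mO_{\tilde F})$ projecting to $x \in \X(\mO_{\tilde F})$. The universal property of the dilatation (applied to the $\mO_F$-flat ring $\mO_{\tilde F}$) forces $x \bmod \p_{\tilde F}$ to lie in $\W(\kappa_{\tilde F})$. Pick a lift $x' \in \X'(\mO_{\tilde F'})$ of the mod-$\varpi^{m+1}$ transport of $x$ (possible by smoothness of $\X'$ and Henselian-ness of $\mO_{\tilde F'}$); by the $\W$-to-$\W'$ matching, $x' \bmod \p_{\tilde F'} \in \W'(\kappa_{\tilde F'})$, so $x'$ refines uniquely to some $y' \in \Y'(\mO_{\tilde F'})$. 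The coordinate identity $f_i'(x') \equiv f_i(x) \pmod{\p_{\tilde F'}^{m+1}}$ gives $f_i'(x')/\varpi_{F'} \equiv f_i(x)/\varpi_F \pmod{\p_{\tilde F'}^m}$, so the constructed isomorphism matches the mod-$\varpi^m$ images of $y$ and $y'$; alternate lifts of $x'$ differ by $\p_{\tilde F'}^{m+1}$ and produce $y'$'s whose mod-$\varpi^m$ images coincide, so the target is well-defined. Uniqueness then follows from smoothness of $\Y'$: the reduction $\Y'(\mO_{\tilde F'}) \twoheadrightarrow \Y'(\mO_{\tilde F'}/\p_{\tilde F'}^m)$ is surjective, so two candidates agreeing on $\mO_{\tilde F}$-point images agree on all $\mO_{\tilde F'}/\p_{\tilde F'}^m$-points of the mod-$\varpi^m$ target, and an equalizer/Hensel-lifting argument (using algebraic closedness of $\kappa_{\tilde F'}$) forces equality of the two morphisms of finite-type smooth schemes over $\mO_{F'}/\p_{F'}^m$. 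The local isomorphisms on overlaps satisfy the same $\mO_{\tilde F}$-point property and hence glue by uniqueness; the main subtle point throughout is precisely this independence of the local construction from the choices of regular parameters and their lifts, ultimately resolved by the $\mO_{\tilde F}$-point criterion.
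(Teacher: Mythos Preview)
Your argument is correct, and differs from the paper's in a useful way. The paper works with an \emph{arbitrary} generating set $\varpi_F, g_1, \dots, g_s$ for the ideal of $\W$; consequently the naive ring $C[x_i]/(\varpi_F x_i - g_i)$ has $\varpi_F^{\infty}$-torsion that must be killed to obtain $\mO_F[\Y]$, and the paper's main effort goes into showing that the obvious isomorphism of naive rings mod $\p_F^m$ descends past this torsion. This is done by identifying both sides with localizations of the symmetric algebra $\bigoplus_t \Sym^t I_W^{[m]}$, invoking \cite[Proposition 4.2]{CY01} and its blow-up description $\mathrm{Bl}'(\X,\W)$. You instead exploit the smoothness of $\W$ in $\X$ to choose, Zariski-locally, a \emph{regular} sequence $(\varpi_F, f_1, \dots, f_r)$; then $A[T_i]/(\varpi_F T_i - f_i)$ is already $\varpi_F$-torsion-free (an easy check in a Cohen-Macaulay ring) and hence equals the dilatation on the nose, so the mod-$\p_F^m$ comparison is immediate. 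The price is that you must work locally and glue, which you correctly handle via the $\mO_{\tilde F}$-point characterization plus schematic density; the paper, writing $\X = \spec C$ globally, avoids gluing but needs the heavier Rees-algebra input from \cite{CY01}. Your route is more elementary and makes the role of the smoothness hypothesis on $\W$ more transparent.

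One small wrinkle: your uniqueness paragraph invokes surjectivity of $\Y'(\mO_{\tilde F'}) \to \Y'(\mO_{\tilde F'}/\p_{\tilde F'}^m)$, but what you actually need is schematic density of the image of $\Y(\mO_{\tilde F})$ in $\Y \times_{\mO_F} \mO_F/\p_F^m$ (the \emph{source}), so that two morphisms out of it agreeing on these points coincide. This is exactly \cite[Lemma 8.5.1]{CY01} (or \cite[Lemma A.5.10]{KP23}), which the paper cites; your ``equalizer/Hensel-lifting'' sketch is pointing at the same fact but would be cleaner if stated for $\Y$ rather than $\Y'$.
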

\begin{proof}[References for the proof]
First we address the uniqueness, assuming the existence.
It is immediately seen that
$\tilde F'/F'$ is automatically a maximal unramified extension.
Given $y \in \Y(\mO_{\tilde F}) \subset \X(\mO_{\tilde F})$,
its image in $\X(\mO_{\tilde F}/\p_{\tilde F}^{m + 1})
= \X'(\mO_{\tilde F'}/\p_{\tilde F'}^{m + 1})$ has image
in $\X(\kappa_{\tilde F}) = \X'(\kappa_{\tilde F'})$ that belongs to
$\W = \W'$. Thus, any $y' \in \X'(\mO_{\tilde F'})$ that lifts this image
(such $y'$ exist as $\X'$ is smooth)
belongs to $\Y'(\mO_{\tilde F'}) \subset \X'(\mO_{\tilde F'})$.
Thus, $\Y \times_{\mO_F} \mO_F/\p_F^m \rightarrow \Y' \times_{\mO_{F'}} \mO_{F'}/\p_{F'}^m$
is pinned down on the image of $\Y(\mO_{\tilde F})$ in
$\Y(\mO_{\tilde F}/\p_{\tilde F}^m)$. Now the uniqueness follows
by the schematic density of this
set of points, as asserted in \cite[Lemma 8.5.1]{CY01}, which applies
since $\Y$ is smooth (see \cite[Section 3.2, Proposition~3]{BLR}, or \cite[Lemma A.5.10]{KP23} or \cite[Proposition~2.16]{MRR}).
Note that this argument does not use the completeness assumption
of \cite[Section 8]{CY01}.

Now we discuss the existence. The non-dependence
on $\tilde F \hookrightarrow F^{\sep}$ and
$\tilde F' \hookrightarrow {F'}^{\sep}$ is easy.
Write $\X = \spec C$ and $\X' = \spec C'$, for an $\mO_F$-algebra $C$ and
an $\mO_{F'}$-algebra $C'$. Then $\X \times_{\mO_F} \mO_F/\p_F^{m + 1}
\rightarrow \X' \times_{\mO_{F'}} \mO_{F'}/\p_{F'}^{m + 1}$
is dual to the inverse of an isomorphism
$C/\p_F^{m + 1} C \rightarrow C'/\p_{F'}^{m + 1} C'$. Let
the subscheme $\W \subset \X_{\kappa_F} \subset \X$
be defined by an ideal $I_W$ generated by $\varpi_F, g_1, \dots, g_s$, where
$\varpi_F \in \mO_F$ is a uniformizer, and $g_1, \dots, g_s \in C$.
Then the subscheme $\W' \subset \X'$ is defined by
the ideal $I_{W'}$ generated by
$\varpi_{F'}, g_1', \dots, g_s'$, where $\varpi_{F'} \in \mO_{F'}$
is a uniformizer matching $\varpi_F$ under
$\mO_F/\p_F^{m + 1} = \mO_{F'}/\p_{F'}^{m + 1}$, and where $g_i'$ matches
$g_i$ under $C/\p_F^{m + 1} C \rightarrow C'/\p_{F'}^{m + 1} C'$
for $1 \leq i \leq s$.

Over the isomorphism
$C/\p_F^{m + 1} C = C'/\p_{F'}^{m + 1} C'$ of
$\mO_F/\p_F^{m + 1} = \mO_{F'}/\p_{F'}^{m + 1}$-algebras
lies an isomorphism
\begin{equation} \label{eqn: for CY01 Proposition 4.2}
\frac{C[x_1, \dots, x_s]}{(\varpi_F x_1 - g_1, \dots, \varpi_F x_s - g_s)}
\otimes_{\mO_F} \mO_F/\p_F^{m + 1}
\rightarrow
\frac{C'[x_1, \dots, x_s]}{(\varpi_{F'} x_1 - g_1', \dots, \varpi_{F'} x_s - g_s)}
\otimes_{\mO_{F'}} \mO_{F'}/\p_{F'}^{m + 1},
\end{equation}
induced by sending $x_i$ to $x_i$ for each $i$. The ring
$C[x_1, \dots, x_s]/(\varpi_F x_1 - g_1, \dots, \varpi_F x_s - g_s)$,
{\em modulo its $\varpi_F^{\infty}$-torsion}, is the coordinate ring
$\mO_F[\Y]$ of $\Y$ (see,  e.g., 
\cite[the discussion of Remark A.5.9]{KP23}).
A similar assertion applies with $F'$ in place of $F$. While
$\mO_F[\Y] \otimes_{\mO_F} \mO_F/\p_F^m$ and
$\mO_{F'}[\Y'] \otimes_{\mO_{F'}} \mO_{F'}/\p_{F'}^m$ are quotients
of the left-hand side and the right-hand side of
the isomorphism \eqref{eqn: for CY01 Proposition 4.2}, it is not obvious that
\eqref{eqn: for CY01 Proposition 4.2} induces an isomorphism between
these quotients. Nevertheless, it does so, by \cite[Proposition 4.2]{CY01},
as we now explain. 

We will superscript with $[m]$ to denote base-change to $\mO_F/\p_F^m$ or $\mO_{F'}/\p_{F'}^m$. In particular, $I_W^{[m]}
= I_W \otimes_{\mO_F} \mO_F/\p_F^m$.  Since $I_W \supset \varpi_F \mO_F[X]$, we get
$\varpi_F^m I_W\supset \varpi_F^{m + 1} \mO_F[X]$. This allows
us to identify $I_W^{[m]} =I_W/\varpi_F^m I_W$ with $I_{W'}^{[m]} =I_{W'}/\varpi_{F'}^m I_{W'}$.
One considers the following diagram:
\[
\xymatrix{
(C[x_1, \dots, x_s]/(\varpi_F x_1 - g_1, \dots, \varpi_F x_s - g_s))^{[m]}
\ar[r] \ar[d]\! &\! \left( \bigoplus_{t \geq 0} \Sym^t_{C^{[m]}}
I_W^{[m]} \right)_{(\varpi_F)}
\ar[r] \ar[d]\! &\! (\mO_F[\Y])^{[m]} \ar[d] \\
(C'[x_1, \dots, x_s]/(\varpi_{F'} x_1 - g_1', \dots, \varpi_{F'} x_s - g'_s))^{[m]}
\ar[r] \!&\! \left( \bigoplus_{t \geq 0} \Sym^t_{{C'}^{[m]}} I_{W'}^{[m]} \right)_{(\varpi_{F'})}
\ar[r] \!&\! (\mO_{F'}[\Y'])^{[m]}
}
\]
where the top middle term refers to the homogeneous localization
of $\bigoplus_{t \geq 0} \Sym^t_{C^{[m]}} I_W^{[m]}$ at the homogeneous element
of degree $1$ given by the image of $\varpi_F \in I_W$,
so that its spectrum is an open subset of
the scheme $\mathrm{Bl}'(\X, \W) \times_{\mO_F} \mO_F/\p_F^m$ defined as in
\cite[Section 4.2.1]{CY01}.
Thus, the right square and its commutativity,
and the fact that the right vertical arrow is an isomorphism, follow
from the canonicity description of \cite[Section 4.2.1]{CY01}. The top
left arrow maps the image of $x_i$ to $g_i/\varpi_F$ for each $i$,
and the bottom left arrow is similar, so that the commutativity
of the left square is clear.

Thus, \eqref{eqn: for CY01 Proposition 4.2} indeed quotients to an isomorphism
$\mO_F[\Y] \otimes_{\mO_F} \mO_F/\p_F^m \rightarrow
\mO_{F'}[\Y'] \otimes_{\mO_{F'}} \mO_{F'}/\p_{F'}^m$, i.e., an
isomorphism $\Y \times_{\mO_F} \mO_F/\p_F^m \rightarrow
\Y' \times_{\mO_{F'}} \mO_{F'}/\p_{F'}^m$. It remains to show that this
morphism is as described in the proposition, so let $y, y'$ be
as in it. It suffices to show that $f(y) = f'(y')$ in
$\mO_{\tilde F}/\p_{\tilde F}^m = \mO_{\tilde F'}/\p_{\tilde F'}^m$
whenever $f$ and $f'$ match under \eqref{eqn: for CY01 Proposition 4.2}.
Without loss of generality, $f$ and $f'$ are both represented
by $x_i$ for some $1 \leq i \leq s$. But since we have
$\varpi_F f = g_i$ and $\varpi_{F'} f' = g_i'$, this follows from
the fact that $g_i$ and $g_i'$ match each other under
$C/\p_F^{m + 1} C \rightarrow C'/\p_{F'}^{m + 1} C'$, as do
$\varpi_F$ and $\varpi_{F'}$ under
$\mO_F/\p_F^{m + 1} \rightarrow \mO_{F'}/\p_{F'}^{m + 1}$,
so that $g_i(y)$ and $g_i'(y')$ match under
$\mO_{\tilde F}/\p_{\tilde F}^{m + 1} \rightarrow \mO_{\tilde F'}/\p_{\tilde F'}^{m + 1}$.
\end{proof}

\begin{proof}[References for the proof of Theorem \ref{thm: CY01}]
The uniqueness follows as in the proof of
Proposition \ref{pro: CY01 Proposition 4.2}.
Using the latter argument of \cite[Remark 8.6]{CY01} ($\mO_F/\p_F^m$, 
$\mathcal{T}^{\ft} \times_{\mO_F} \mO_F/\p_F^m$
and $\mathcal{R}^{\ft} \times_{\mO_F} \mO_F/\p_F^m$ remain unchanged
when $F$ is replaced by its completion), together with 
\cite[Proposition 2.3.4(2)]{KP23}, from which it follows that
$\mcT^{\ft}(\mO_{\tilde F}) = \T(\tilde F)_b$ is dense in
the analogous group associated to the completion of $F$
(so that the source and
target of middle vertical arrow are unchanged when we replace $F$ by
its completion), we may and shall assume
that $F$ and $F'$ are complete. This is being done so that
we may use results from \cite[Section 8]{CY01}.

We will denote by $\underline \T^i$ and $\underline \R^i$
the integral models $\underline T^i$ for $\T$
and $\underline R^i$ for $\R$ from \cite[Sections 3.2, 3.4, 3.6]{CY01}.
By definition, $\underline \R^0 := \mathcal{R}^{\ft}$, where
$\R := \Res_{L/F} \T_L$, and $\underline \T^0$
(see \cite[Section 3.6]{CY01}) is the schematic closure of $\T$ in
$\underline \R^0$. Thus, $\underline \T^0$
is the standard model of $\T$ in the sense of \cite[Section B.4]{KP23}.
If $\underline \T^i$ and $\underline \R^i$ are defined, then
$\underline \T^{i + 1}$ (resp., $\underline \R^{i + 1}$)
is the dilatation of a smooth subscheme
$\Z^i \subset \underline \T^i \times_{\mO_F} \kappa_F$ on $\underline \T^i$
(resp., $\W^i \subset \underline \R^i \times_{\mO_F} \kappa_F$ on
$\underline \R^i$). Further, $\underline \T^i$ can be realized as
the schematic closure of $\T$ in $\underline \R^i$
(\cite[Lemma 3.5]{CY01}).
We have similar objects $(\underline \T')^i$ and
$(\underline \R')^i$ associated to $\T'/F'$.

By \cite[Corollary 8.2.4]{CY01}, or rather its proof, thanks to the
inequalities $\psi_{L/F}(l)/e(L/F) > 2h$
and $\psi_{L/F}(l)/e(L/F) - h \geq m + h$, the isomorphism
$\underline \R^0 \times_{\mO_F} \mO_F/\p_F^{m + h} \rightarrow (\underline \R')^0 \times_{\mO_{F'}} \mO_{F'}/\p_{F'}^{m + h}$
given by \eqref{Chai-Yu ResLFTL} of Notation \ref{notn: h}
restricts to an isomorphism
$\underline \T^0 \times_{\mO_F} \mO_F/\p_F^{m + h} \rightarrow
(\underline \T')^0 \times_{\mO_{F'}} \mO_{F'}/\p_{F'}^{m + h}$,
as the following two sentences explain. Recall the chains of inclusions of
closed subschemes $\underline \T^0 \subset \underline \R^0
\subset \mathbb{A}^{\dim \T \cdot ([L : F] + 1)}/\mO_F$
and $(\underline{\T'})^0 \subset (\underline{\R'})^0
\subset \mathbb{A}^{\dim \T \cdot ({[L : F] + 1)}}/\mO_{F'}$
from \cite[Section 8.1 and the beginning of Section 8.3]{CY01},
reviewed in Remark \ref{rmk: Chai-Yu ResLFTL}.
The isomorphism
$\underline \R^0 \times_{\mO_F} \mO_F/\p_F^{m + h} \rightarrow (\underline \R')^0 \times_{\mO_{F'}} \mO_{F'}/\p_{F'}^{m + h}$
from \eqref{Chai-Yu ResLFTL} of Notation \ref{notn: h} has been observed
to be a restriction of the obvious identification of affine spaces
$\mathbb{A}^{\dim \T \cdot ([L : F] + 1)}/(\mO_F/\p_F^{m + h})
= \mathbb{A}^{\dim \T \cdot ([L : F] + 1)}/(\mO_{F'}/\p_{F'}^{m + h})$,
and the isomorphism
$\underline \T^0 \times_{\mO_F} \mO_F/\p_F^{m + h} \rightarrow (\underline \T')^0 \times_{\mO_{F'}} \mO_{F'}/\p_{F'}^{m + h}$
is constructed in \cite[Corollary 8.2.4]{CY01} to also satisfy this property.

Since $\underline \T^0(\mO_{\tilde F}) \subset \underline \R^0(\mO_{\tilde F})$
identifies with the inclusion $\T(\tilde F)_b \subset \R(\tilde F)_b$,
and $\T(\tilde F)^{\naive}_{m + h} = \T(\tilde F) \cap
\R(\tilde F)_{m + h}$, and similar assertions hold for $F'$,
the above paragraph implies that we indeed have
a well-defined middle vertical arrow
of \eqref{eqn: CY01 main result} making the
left square of that diagram commute.

A positive integer $\delta$ is introduced in \cite[Section 8.5]{CY01},
and it is observed that $\delta \leq h$. As observed there
(the invocation of \cite[Lemma 5.5]{CY01})
and using our assumption that $m + 3h \leq l$, working with $F'$
instead of $F$ does not change $\delta$. Thus, as observed at
\cite[the beginning of Section 8.5.2]{CY01}, $\mcT^{\ft}$ equals
$\underline \T^h$ and ${\mcT'}^{\ft}$ equals $(\underline{\T}')^h$.

$\underline \R^h$ and $(\underline \R')^h$
are obtained from $\underline \R^0$ and $(\underline \R')^0$ by
a series of $h$ dilatations. At the $i$-th step, one
inductively assumes given an identification
$\underline \R^{i - 1} \times_{\mO_F} \mO_F/\p_F^{m + h + 1 - i}
\rightarrow (\underline \R')^{i - 1} \times_{\mO_{F'}} \mO_{F'}/\p_{F'}^{m + h + 1 - i}$,
and dilatates $\underline \R^{i - 1}$ and $(\underline \R')^{i - 1}$
with respect to the same subscheme of
$\underline \R^{i - 1} \times_{\mO_F} \kappa_F
= \underline (\R')^{i - 1} \times_{\mO_{F'}} \kappa_{F'}$
to get $\underline \R^i$ and $(\underline \R')^i$, yielding
by Proposition \ref{pro: CY01 Proposition 4.2} an
isomorphism
$\underline \R^i \times_{\mO_F} \mO_F/\p_F^{m + h - i}
\rightarrow (\underline \R')^i \times_{\mO_{F'}} \mO_{F'}/\p_{F'}^{m + h - i}$
(see \cite[Proposition 8.4]{CY01}).
It therefore follows from $h$-many applications
of Proposition \ref{pro: CY01 Proposition 4.2}
that there is a unique isomorphism
$\underline \R^h \times_{\mO_F} \mO_F/\p_F^m
\rightarrow (\underline \R')^h \times_{\mO_{F'}} \mO_{F'}/\p_{F'}^m$
that maps the image of $y \in \underline \R^h(\mO_{\tilde F}) \subset
\mathcal{R}^{\ft}(\mO_{\tilde F})$ to
that of $y' \in (\underline \R')^h(\mO_{\tilde F'}) \subset
{\mathcal{R}'}^{\ft}(\mO_{\tilde F'})$ whenever the image
of $y$ maps to that of $y'$ under
$\underline \R^0 \times_{\mO_F} \mO_F/\p_F^{m + h} \rightarrow (\underline \R')^0 \times_{\mO_{F'}} \mO_{F'}/\p_{F'}^{m + h}$ (these applications are justified
by the fact that the $\underline \R^i$ and the $(\underline \R')^i$
are smooth, unlike the $\underline \T^i$ and the $(\underline \T')^i$).

It is argued in \cite[Section 8.5.2]{CY01} that
$\underline \R^h \times_{\mO_F} \mO_F/\p_F^m
\rightarrow (\underline \R')^h \times_{\mO_{F'}} \mO_{F'}/\p_{F'}^m$
restricts to an isomorphism
$\mcT^{\ft} \times_{\mO_F} \mO_F/\p_F^m
= \underline \T^h \times_{\mO_F} \mO_F/\p_F^m
\rightarrow (\underline \T')^h \times_{\mO_{F'}} \mO_{F'}/\p_{F'}^m
= {\mcT'}^{\ft} \times_{\mO_{F'}} \mO_{F'}/\p_{F'}^m$.
Hence, under this isomorphism, the image of
$y \in \mcT^{\ft}(\mO_{\tilde F}) \subset \mathcal{R}^{\ft}(\mO_{\tilde F})$
maps to that of
$y' \in {\mcT'}^{\ft}(\mO_{\tilde F'})
\subset {\mathcal{R}'}^{\ft}(\mO_{\tilde F'})$
whenever, under the middle vertical arrow of
\eqref{eqn: CY01 main result}, the image of $y$ maps to that
of $y'$. This makes sense of the right square of
\eqref{eqn: CY01 main result} and gives its commutativity.
\end{proof}

Note that in the above proof, the implicit assertions such as that
$\T(\tilde F)^{\naive}_{m + h} \subset \T(\tilde F)_m$ have
been implicitly taken care of.
This could be compared with the easier containment
$\T(\tilde F)^{\naive}_r \supset \T(\tilde F)_r$
(use \cite[Propositions B.10.4 and B.10.13]{KP23}),
which can be proper for tori that are not ``weakly induced''.
In any case, let us record the containment
$\T(\tilde F)^{\naive}_{m + h} \subset \T(\tilde F)_m$,
since it applies in greater generality 
(without assuming $m + 3 h(F, \T) \lleq_{\T} l$),
and does not need the strength of \cite[Proposition 4.2]{CY01}
(i.e., of Proposition \ref{pro: CY01 Proposition 4.2}):
\begin{lm} \label{lm: MP intersection}
Let $\T$ be a torus over a DVHF $F$ with perfect residue field.
Assume that $h := h(F, \T)
< \lfloor \psi_{L/F}(l)/e(L/F) \rfloor$ for
some at most $l$-ramified finite Galois extension $L/F$ splitting $\T$.
Then for any positive integer $m$, we have
$\T(\tilde F)^{\naive}_{m + h} = \T(\tilde F)^{\std}_{m + h}
\subset \T(\tilde F)_m$.
\end{lm}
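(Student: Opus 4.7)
The key step is to establish the inclusion $\T(\tilde F)^{\naive}_{m+h} \subset \T(\tilde F)_m$: this immediately yields the claimed equality $\T(\tilde F)^{\naive}_{m+h} = \T(\tilde F)^{\std}_{m+h}$, because $\T(\tilde F)_m \subset \mcT(\mO_{\tilde F}) = \T(\tilde F)^0$ (automatic for $m \geq 1$) combined with $\T(\tilde F)^{\std}_r = \T(\tilde F)^0 \cap \T(\tilde F)^{\naive}_r$ forces $\T(\tilde F)^{\naive}_{m+h}$ to already sit inside $\T(\tilde F)^0$. My approach is to pass to the induced ambient torus $\R := \Res_{L/F} \T_L$, for which filtrations are transparent, and then transport the resulting congruence back to $\T$ through the Chai--Yu dilatation tower $\underline{\T}^0 \hookrightarrow \underline{\T}^1 \hookrightarrow \cdots \hookrightarrow \underline{\T}^h = \mcT^{\ft}$ from the proof of Theorem \ref{thm: CY01}.

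For $t \in \T(\tilde F)^{\naive}_{m+h}$, Remark \ref{rmk: naive filtration} places $t$ in $\R(\tilde F)^{\naive}_{m+h}$ under the diagonal embedding. Since $L$ splits $\T$, $\R \cong (\Res_{L/F} \GG_m)^{\dim \T}$ is an induced (in particular, weakly induced) torus, with connected N\'eron model $\mcR = \Res_{\mO_L/\mO_F} \GG_m^{\dim \T}$; combining \cite[Propositions B.10.4 and B.10.13]{KP23} (invoked in the paper for the reverse containment) with an explicit coordinate computation in $\mcR$ yields $\R(\tilde F)^{\naive}_r = \R(\tilde F)_r$ for every $r \geq 0$. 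Hence, viewing $t$ as a point of $\underline{\R}^0(\mO_{\tilde F}) = \mcR(\mO_{\tilde F})$, its image in $\underline{\R}^0(\mO_{\tilde F}/\p_{\tilde F}^{m+h})$ is the identity.

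The remaining task is to propagate this reduction-to-the-identity across the $h$ dilatations building $\underline{\R}^h$ (and its closed subscheme $\underline{\T}^h = \mcT^{\ft}$) from $\underline{\R}^0$, at a cost of one level per step. Each dilatation center is a smooth group-theoretic subscheme of the special fiber and hence contains the identity section, so the identity lifts and so does any sufficiently nearby point. The elementary coordinate description $y_j = g_j(x)/\varpi$ of a dilatation $\Y = \mathrm{Bl}_{\W}(\X)$ (with the $g_j$ cutting out $\W$ modulo $\varpi$ and vanishing at the identity) supplies the filtration-shift: if $x \in \X(\mO_{\tilde F})$ reduces to the identity modulo $\p_{\tilde F}^r$ with $r \geq 1$, then $g_j(x) \in \p_{\tilde F}^r$, so the lift of $x$ to $\Y(\mO_{\tilde F})$ reduces to the identity modulo $\p_{\tilde F}^{r-1}$. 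Iterating $h$ times brings $t$ to the identity in $\mcT^{\ft}(\mO_{\tilde F}/\p_{\tilde F}^m)$, which equals $\mcT(\mO_{\tilde F}/\p_{\tilde F}^m)$ since $\mcT \hookrightarrow \mcT^{\ft}$ is an open immersion through the identity and $m \geq 1$. That places $t$ in $\T(\tilde F)_m$, as desired.

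The main obstacle is the iterative application of the filtration-shift across the entire tower: one must verify that every Chai--Yu dilatation center indeed contains the identity section, so that both the lift of the identity and of any $t$ already close to it exist at each stage. This input is essentially the \emph{group-theoretic} content of the Chai--Yu construction in \cite[Sections 3, 8]{CY01} and deliberately stops short of invoking the comparison-across-close-fields content of \cite[Proposition 4.2]{CY01}, which is the point emphasized by the remark just before the lemma.
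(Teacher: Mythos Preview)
Your proposal is correct and follows essentially the same route as the paper: reduce to the inclusion $\T(\tilde F)^{\naive}_{m+h}\subset\T(\tilde F)_m$, embed into the induced torus $\R=\Res_{L/F}\T_L$ where the naive and congruent filtrations agree, and then propagate the congruence through the $h$-step dilatation tower using the coordinate relation $\varpi x_j=g_j$ to lose one level per step. The paper carries out the induction in the smooth tower $\underline{\R}^i$ rather than $\underline{\T}^i$ (your arrows, incidentally, should point the other way), and phrases the identity-congruence as ``$f(t)\in\varpi^{m+h-i}\mO_{\tilde F}$ for all $f$'' rather than via vanishing of the $g_j$ at the identity, but these are cosmetic differences; your explicit remark that the dilatation centers contain the identity section is exactly the ingredient the paper is using implicitly.
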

\begin{remark} \label{rmk: MP intersection}
In the situation of the above lemma, we claim that whenever
$F \leftrightarrow_{m + h} F'$, and
$\tilde F \hookrightarrow F^{\sep}$ and $\tilde F'
\hookrightarrow {F'}^{\sep}$ are compatible embeddings, where $\tilde F/F$ is
a maximal unramified extension,
we also have an analogous containment
$\T'(\tilde F')^{\naive}_{m + h} = \T'(\tilde F')^{\std}_{m + h}
\subset \T'(\tilde F')_m$:
this is because $h(F, \T) = h(F', \T')$ by the discussion
of Notation \ref{notn: h}\eqref{h}.
\end{remark}
\begin{proof}[Proof of Lemma \ref{lm: MP intersection}]
It suffices to show that $\T(\tilde F)^{\naive}_{m + h} \subset \T(\tilde F)_m$,
since it will then follow that
\[ \T(\tilde F)^{\std}_{m + h}
\subset \T(\tilde F)^{\naive}_{m + h} = \T(\tilde F)^{\naive}_{m + h} \cap
\T(\tilde F)_m \subset \T(\tilde F)^{\naive}_{m + h} \cap
\T(\tilde F)_0 = \T(\tilde F)^{\std}_{m + h}. \]

Let $\R = \Res_{L/F} \T$.
Since the connected N\'eron model
$\mathcal{T}$ is obtained from ${\mathcal{T}}^{\ft}$ by 
dilatating with respect to the identity component of the special fiber,
it follows that the group $\T(\tilde F)_m
= \ker(\mathcal{T}(\mO_{\tilde F}) \rightarrow
\mathcal{T}(\mO_{\tilde F}/\p_{\tilde F}^m))$
also equals
$\ker(\mathcal{T}^{\ft}(\mO_{\tilde F}) \rightarrow
\mathcal{T}^{\ft}(\mO_{\tilde F}/\p_{\tilde F}^m))$.
The kernel of $\mathcal{R}^{\ft}(\mO_{\tilde F}) \rightarrow
\mathcal{R}^{\ft}(\mO_{\tilde F}/\p_{\tilde F}^{m + h})$ equals
$\R(\tilde F)_{m + h}$, which equals $\R(\tilde F)^{\naive}_{m + h}$
by \cite[Corollary B.10.13]{KP23} (and the equality
$\mathcal{R} = \mathcal{R}^{\ft}$).

We will use notation from the above (outline of) proof.
It suffices to show that, if $t \in \underline \T^0(\mO_{\tilde F})$
has trivial image in $\mathcal{R}^{\ft}(\mO_{\tilde F}/\p_{\tilde F}^{m + h})
= \underline \R^0(\mO_{\tilde F}/\p_{\tilde F}^{m + h})$,
then $t$ has trivial image in
$\underline \R^h(\mO_{\tilde F}/\p_{\tilde F}^m)$
(and hence belongs to $\ker(\mathcal{T}^{\ft}(\mO_{\tilde F}) \rightarrow
\mathcal{T}^{\ft}(\mO_{\tilde F}/\p_{\tilde F}^m))$).
Indeed, one shows by induction on $0 \leq i \leq h$ that
$t$ has trivial image in
$\underline \R^i(\mO_{\tilde F}/\p_{\tilde F}^{m + h - i})$.
The induction step is as in the arguments
around and below \eqref{eqn: for CY01 Proposition 4.2}
in the proof of Proposition \ref{pro: CY01 Proposition 4.2}:
if the coordinate ring of $\R^i$ is $C$, write
the coordinate ring of $\R^{i + 1}$ as a quotient of
$C[x_1, \dots, x_s]/(\varpi_F x_1 - g_1,
\dots, \varpi_F x_s - g_s)$, and note that the induction hypothesis
that $f(t) \in \varpi_F^{m + h - i} \mO_{\tilde F}$ for all
$f \in C$ implies $f(t) \in \varpi_F^{m + h - i - 1} \mO_{\tilde F}$
for all $f \in C[x_1, \dots, x_s]/(\varpi_F x_1 - g_1,
\dots, \varpi_F x_s - g_s)$.
\end{proof}

Note that the proof of the above lemma shows that we can replace
$h = h(F, \T)$ by any $i$ such that
$\underline \T^i = \mathcal{T}^{\ft}$, with $\underline \T^i$
as in the proof of Theorem \ref{thm: CY01} (e.g., $i$ could be
the $\delta$ of that proof).

\section{Standard, congruent and Chai-Yu isomorphisms}
\label{sec: standard congruent Chai-Yu}

\subsection{The definition of standard and congruent isomorphisms}

\begin{notn} \label{notn: standard congruent Chai-Yu}
Let $(F, \T) \leftrightarrow_l (F', \T')$, and let
$r > 0$. Elements $t \T(F)_r^{\naive} \in \T(F)/\T(F)^{\naive}_r$ and
$t' \T'(F')_r^{\naive} \in \T'(F')/\T'(F')^{\naive}_r$
(or by abuse of notation, $t \in \T(F)$ and $t' \in \T'(F')$)
are said to be standard correspondents of each other if
for some (and hence by Lemma \ref{lm: standard isomorphism well-defined}
below, any) compatible embeddings $L \hookrightarrow F^{\sep}$ and
$L' \hookrightarrow {F'}^{\sep}$, such that $L$ splits $\T$ and
$r \lleq_L l$ (see Notation \ref{notn: lleq L}), the following
holds: for every $\chi = \chi' \in X^*(\T') = X^*(\T)$,
$\chi(t)$ and $\chi'(t')$ have images that match under
the isomorphism $L^{\times}/(1 + \p_L^{\lceil e(L/F) r \rceil})
\rightarrow {L'}^{\times}/(1 + \p_{L'}^{\lceil e(L/F) r \rceil})$
(described below Notation \ref{notn: lleq L}).
\end{notn}

\begin{remark}
\begin{enumerate}[(i)]
\item Note that the condition in Notation \ref{notn: standard congruent Chai-Yu}
does not change if $t$ or $t'$ is replaced by another element
of $t \T(F)^{\naive}_r$ or $t' \T'(F')^{\naive}_r$ (this follows from
the definition of the naive filtration subgroups $\T(F)^{\naive}_r$ and
$\T'(F')^{\naive}_r$).

\item We will see in Lemma \ref{lm: standard isomorphism well-defined} below
that, in the setting of Notation \ref{notn: standard congruent Chai-Yu},
every element of $\T(F)/\T(F)^{\naive}_r$ has either a unique
standard correspondent in $\T'(F')/\T'(F')^{\naive}_r$,
or none at all (and vice versa).
This will sometimes be used without further comment in what follows.
\end{enumerate}
\end{remark}

\begin{df} \label{df: standard congruent Chai-Yu}
Let $(F, \T) \leftrightarrow_l (F', \T')$.
\begin{enumerate}[(i)]
\item \label{standard isomorphism}
Let $r > 0$ be a positive real number.
An isomorphism 
$\T(F)/\T(F)^{\naive}_r \rightarrow \T'(F')/\T'(F')^{\naive}_r$
of abelian groups is said to be a standard isomorphism if
it maps every element of its source to a standard correspondent of
it.

\item \label{congruent isomorphism} Let $m$ be a positive integer.
An isomorphism $\T(F)/\T(F)_m \rightarrow \T'(F')/\T'(F')_m$ is
said to be a congruent isomorphism if for some
(or equivalently by Lemma \ref{lm: congruent isomorphism}\eqref{congruent isomorphism well-defined} below, any)
compatible embeddings $\tilde F \hookrightarrow F^{\sep}$ and
$\tilde F' \hookrightarrow {F'}^{\sep}$, where $\tilde F/F$ is
a maximal unramified extension, and some $r > 0$ such that
$\T(\tilde F)^{\naive}_r \subset \T(\tilde F)_m$ and
$\T'(\tilde F')^{\naive}_r \subset \T'(\tilde F')_m$,
the given isomorphism is a restriction of an isomorphism
$\T(\tilde F)/\T(\tilde F)_m \rightarrow \T'(\tilde F')/\T'(\tilde F')_m$,
which in turn is induced by a standard isomorphism
$\T(\tilde F)/\T(\tilde F)^{\naive}_r \rightarrow \T'(\tilde F')/\T'(\tilde F')^{\naive}_r$ associated to $(\tilde F, \T_{\tilde F}) \leftrightarrow_l
(\tilde F', \T'_{\tilde F'})$.

\item \label{Chai-Yu isomorphism}
Let $m$ be a positive integer.
An isomorphism $\mcT^{\ft} \times_{\mO_F} \mO_F/\p_F^m \rightarrow
{\mcT'}^{\ft} \times_{\mO_{F'}} \mO_{F'}/\p_{F'}^m$ is
said to be a Chai-Yu isomorphism if for some
(or equivalently by
Lemma \ref{lm: congruent isomorphism}\eqref{congruent isomorphism well-defined} below, any) compatible embeddings
$\tilde F \hookrightarrow F^{\sep}$ and
$\tilde F' \hookrightarrow {F'}^{\sep}$, where $\tilde F/F$ is a
maximal unramified extension,
and some $r > 0$ such that $\T(\tilde F)^{\naive}_r \subset \T(\tilde F)_m$ and
$\T'(\tilde F')^{\naive}_r \subset \T'(\tilde F')_m$,
the isomorphism
$\T(\tilde F)_b/\T(\tilde F)_m \rightarrow \T'(\tilde F')_b/\T'(\tilde F')_m$
obtained by evaluating the given isomorphism at
$\mO_{\tilde F}/\p_{\tilde F}^m = \mO_{\tilde F'}/\p_{\tilde F'}^m$
is induced
by a ``restricted standard isomorphism''
$\T(\tilde F)_b/\T(\tilde F)^{\naive}_r
\rightarrow \T'(\tilde F')_b/\T'(\tilde F')^{\naive}_r$, namely, one
that maps each element of its source to a standard correspondent of it,
for the realization $(\tilde F, \T_{\tilde F}) \leftrightarrow_l
(\tilde F', \T'_{\tilde F'})$.
\end{enumerate}
\end{df}

The following remark helps make sense of the above definition.
\begin{remark} \label{rmk: standard congruent Chai-Yu}\
\begin{enumerate}[(i)]
\item Note that, by definition, a standard isomorphism
$\T(F)/\T(F)^{\naive}_r \rightarrow \T'(F')/\T'(F')^{\naive}_r$
does not exist unless 
$0 \leq r  \lleq _{\T} l$, i.e.,
$r \leq \psi_{L/F}(l)/e(L/F)$ for some finitely ramified at most
$l$-ramified separable extension $L/F$ splitting $\T$. Similarly,
a congruent isomorphism
$\T(\tilde F)/\T(\tilde F)_m \rightarrow \T'(\tilde F')/\T'(\tilde F')_m$
or a Chai-Yu isomorphism $\mcT^{\ft} \times_{\mO_F} \mO_F/\p_F^m \rightarrow
{\mcT'}^{\ft} \times_{\mO_{F'}} \mO_{F'}/\p_{F'}^m$
does not exist unless there exist $r \geq 0$ such
that $l \ggeq_{\T} r$, $\T(\tilde F)^{\naive}_r \subset
\T(\tilde F)_m$ and $\T'(\tilde F')^{\naive}_r \subset \T'(\tilde F')_m$
(these latter conditions force $r \geq m$, since
$\T(F)_r \subset \T(F)^{\naive}_r$).

\item In Definition \ref{df: standard congruent Chai-Yu}\eqref{congruent isomorphism},
the notion of restriction from $\T(\tilde F)/\T(\tilde F)_m$ to
$\T(F)/\T(F)_m$ makes sense, because
$\T(\tilde F)_m \cap \T(F) = \T(F)_m$
(see Notation \ref{notn: close local fields tori}\eqref{filtration subgroups strange}).
For $L/F$ finitely ramified separable with $e = e(L/F)$,
since $\T(L)^{\naive}_{e r} \cap \T(F) = \T(F)^{\naive}_r$
(Remark \ref{rmk: naive filtration}), we may restrict from
$\T(L)/\T(L)^{\naive}_{e r}$ to $\T(F)/\T(F)^{\naive}_r$.

\item To relate the definition of a Chai-Yu isomorphism to
isomorphisms constructed in \cite{CY01}, see
Proposition \ref{pro: Chai-Yu isomorphism CY01}
below.
\end{enumerate}
\end{remark}

\subsection{Some first properties of standard isomorphisms}

\begin{lm} \label{lm: standard isomorphism well-defined}
Let $(F, \T) \leftrightarrow_l (F', \T')$, and let $r > 0$. 
If $t \in \T(F)$ and $t' \in \T'(F')$ satisfy
the conditions in the definition of a standard correspondent
(Notation \ref{notn: standard congruent Chai-Yu})
with respect to some choice of $L \hookrightarrow F^{\sep}$
and $L' \hookrightarrow {F'}^{\sep}$ (with $L/F$ finitely ramified,
such that $L$ splits $\T$ and $r \lleq_L l$), then it satisfies those
conditions with respect to any other such choice, say
$L_1 \hookrightarrow F^{\sep}$ and
$L_1' \hookrightarrow {F'}^{\sep}$. Moreover, every element
of $\T(F)/\T(F)^{\naive}_r$ has either a unique standard corresondent
in $\T'(F')/\T'(F')^{\naive}_r$, or none at all.
\end{lm}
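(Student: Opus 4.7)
The plan is to identify the condition defining a standard correspondent, for any admissible choice of $(L, L')$, with a single canonical condition stated in terms of the minimal splitting field $K$ of $\T$, thereby removing all dependence on the auxiliary extension.

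First, I would let $K \subset F^{\sep}$ denote the fixed field of the normal subgroup $H_\T := \ker(\Gamma_F \to \Aut(X^*(\T)))$: so $K/F$ is finite and Galois, contained (as a subfield of $F^{\sep}$) in any $L$ that splits $\T$, and at most $l$-ramified since $\T$ is. Using $\psi_{L/F} = \psi_{L/K} \circ \psi_{K/F}$ together with the bound $\psi_{L/K}(s) \leq e(L/K) s$ coming from \eqref{eqn: Herbrand function}, the hypothesis $r \lleq_L l$ gives
\[ r \leq \psi_{L/F}(l)/e(L/F) \leq \psi_{K/F}(l)/e(K/F), \]
so $r \lleq_K l$. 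Since $H_\T$ is normal, $K$ sits intrinsically inside $F^{\sep}$ (independently of the embedding of $L$); and the equivariance of $X^*(\T) = X^*(\T')$ under $\Gamma_F/I_F^l = \Gamma_{F'}/I_{F'}^l$ identifies $H_\T/I_F^l$ with $H_{\T'}/I_{F'}^l$, so the canonical inclusions $K \hookrightarrow F^{\sep}$ and $K' \hookrightarrow {F'}^{\sep}$ are compatible, producing a canonical realization $K \leftrightarrow_{\psi_{K/F}(l)} K'$ that does not depend on $(L, L')$.

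Next I would apply the commutative diagram from Subsubsection \ref{subsubsec: close local fields multiplicative groups} to the tower $F \subset K \subset L$ and its counterpart over $F'$, and quotient by $1 + \p_L^{\lceil e(L/F) r \rceil}$ and $1 + \p_K^{\lceil e(K/F) r \rceil}$ (this is permissible since these exponents do not exceed $\psi_{L/F}(l)$ and $\psi_{K/F}(l)$ respectively). The key verification is that the induced map
\[ K^{\times}/(1 + \p_K^{\lceil e(K/F) r \rceil}) \hookrightarrow L^{\times}/(1 + \p_L^{\lceil e(L/F) r \rceil}) \]
is injective, which amounts to the identity $K^{\times} \cap (1 + \p_L^{\lceil e(L/F) r \rceil}) = 1 + \p_K^{\lceil e(K/F) r \rceil}$. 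This follows from the elementary equality $\lceil \lceil a \rceil/n \rceil = \lceil a/n \rceil$ (for real $a \geq 0$ and integer $n \geq 1$), applied with $a = e(L/F) r$ and $n = e(L/K)$, together with the relation that the normalized valuation on $L$ restricts to $e(L/K)$ times the normalized valuation on $K$. Combining this injectivity and its $F'$-analogue with the fact that $\chi(t) \in K^{\times}$ and $\chi'(t') \in {K'}^{\times}$ for every $\chi = \chi' \in X^*(\T) = X^*(\T')$ (since $K, K'$ split $\T, \T'$), the commutative diagram yields that the condition defining a standard correspondent for $(L, L')$ is equivalent to its counterpart for the canonical pair $(K, K')$. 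As the $(K, K')$-condition is independent of $L$, the same argument applied to $(L_1, L_1')$ finishes the first claim.

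For uniqueness, given standard correspondents $t_1', t_2' \in \T'(F')$ of a common $t \in \T(F)$, the defining conditions give $\chi'((t_1')^{-1} t_2') \in 1 + \p_{L'}^{\lceil e(L/F) r \rceil}$ for every $\chi' \in X^*(\T')$. Using $e(L/F) = e(L'/F')$ (part of the data $L \leftrightarrow_{\psi_{L/F}(l)} L'$) and $\val_{F'} = \val_{L'}/e(L'/F')$ on $L'$, this yields $\val_{F'}(\chi'((t_1')^{-1} t_2') - 1) \geq r$. Since $r > 0$, this in addition forces $\chi'((t_1')^{-1} t_2')$ to be a unit for each $\chi'$, hence $(t_1')^{-1} t_2' \in \T'(F')_b$, and therefore $(t_1')^{-1} t_2' \in \T'(F')^{\naive}_r$, so the two candidates agree in the quotient. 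I expect the main obstacle to be the careful ceiling bookkeeping in the injectivity calculation, where the descent from $L$ to $K$ is not tautological; the rest reduces to standard diagram chasing inside the framework already set up in the earlier subsections.
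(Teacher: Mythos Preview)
Your proof is correct and follows essentially the same strategy as the paper: reduce any admissible $(L,L')$ to the minimal Galois splitting pair $(K,K')$ via the commutative diagram of truncated multiplicative groups from Subsubsection~\ref{subsubsec: close local fields multiplicative groups}, and deduce uniqueness from the resulting embedding into $\Hom(X^*(\T'),{L'}^\times/(1+\p_{L'}^{\lceil e r\rceil}))$. The only substantive difference is that you spell out the injectivity of $K^{\times}/(1+\p_K^{\lceil e(K/F)r\rceil}) \to L^{\times}/(1+\p_L^{\lceil e(L/F)r\rceil})$ via the identity $\lceil\lceil a\rceil/n\rceil=\lceil a/n\rceil$, which the paper leaves implicit in the phrase ``follows from the commutative diagram''; this extra care is welcome but does not change the argument.
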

\begin{proof}
We first prove the former assertion.
Assume first that $L/F$ is minimal, i.e.,
isomorphic to the fixed field of $\ker(\Gamma_F \rightarrow \Aut(X^*(\T)))$.
In this case, $L \hookrightarrow F^{\sep}$ factors as the composite
of $L_1 \hookrightarrow F^{\sep}$ and some $F$-algebra embedding
$L \hookrightarrow L_1$. Moreover, $L'$ is automatically a minimal
splitting extension for $\T'$, and we similarly get
$L' \hookrightarrow L_1' \hookrightarrow {F'}^{\sep}$. In
this case, the lemma follows from the commutative diagram
of Subsubsection \ref{subsubsec: close local fields multiplicative groups}.
A similar argument, with the roles of $L$ and $L_1$ swapped, reduces
the case of general $L$ to that of minimal $L$: note
that replacing $L$ with a smaller splitting extension
increases $\psi_{L/F}(l)/e(L/F)$, and hence
preserves the relation $r \lleq_L l$.

To see the uniqueness assertion, setting $e = e(L/F) = e(L'/F')$
and recalling that $\T(F)/\T(F)^{\naive}_r \subset \T(L)/\T(L)^{\naive}_{e r}$,
use the identifications
\[
\T(L)/\T(L)^{\naive}_{e r}
\overset{\cong}{\rightarrow}
\Hom(X^*(\T), L^{\times}/(1 + \p_L^{\lceil e r \rceil}))
\overset{\cong}{\rightarrow}
\Hom(X^*(\T'), {L'}^{\times}/(1 + \p_{L'}^{\lceil e r \rceil}))
\\ \overset{\cong}{\rightarrow} \T'(L')/\T'(L')^{\naive}_{e r},
\]
where the first map is $t \mapsto (\chi \mapsto \chi(t))$, and note
that $t$ and $t'$ are standard correspondents if and only if they
define the same element of this identified object.
\end{proof}

\begin{remark} \label{rmk: unique standard isomorphism}
Assume the setting of the above lemma.
\begin{enumerate}[(i)]
\item \label{unique standard isomorphism} If $\T/F$ is split,
a unique standard isomorphism
$\T(F)/\T(F)^{\naive}_r \rightarrow \T'(F')/\T'(F')^{\naive}_r$ exists:
\[ \T(F)/\T(F)^{\naive}_r
\rightarrow
\Hom(X^*(\T), F^{\times}/1 + \p_F^{\lceil r \rceil})
=
\Hom(X^*(\T'), {F'}^{\times}/1 + \p_{F'}^{\lceil r \rceil})
\rightarrow \T'(F')/\T'(F')^{\naive}_r. \]

\item \label{unique standard isomorphism general T}
For general $\T$, there is either a unique
standard isomorphism \[\T(F)/\T(F)^{\naive}_r \rightarrow
\T'(F')/\T'(F')^{\naive}_r,\] or none at all. Indeed, choose 
compatible embeddings $L \hookrightarrow F^{\sep}$ and
$L' \hookrightarrow {F'}^{\sep}$, with $L/F$ a splitting extension
for $\T$ such that $r \lleq_L l$. Set
$e = e(L/F) = e(L'/F')$. If the standard isomorphism
$\T(L)/\T(L)^{\naive}_{e r}
\rightarrow \T'(L')/\T'(L')^{\naive}_{e r}$
associated to $(L, \T_L) \leftrightarrow_{\psi_{L/F}(l)} (L', \T'_{L'})$
(made sense of using Notation \ref{notn: close local fields tori}\eqref{close local fields tori restriction})
restricts to an isomorphism
$\T(F)/\T(F)^{\naive}_r \rightarrow \T'(F')/\T'(F')^{\naive}_r$,
then this restriction defines a standard
isomorphism.
If not, there is no standard isomorphism
$\T(F)/\T(F)^{\naive}_r \rightarrow \T'(F')/\T'(F')^{\naive}_r$.
\end{enumerate}
\end{remark}

\begin{lm} \label{lm: standard isomorphism functorial}
Standard isomorphisms have the following functoriality.
Let $(F, \T_i) \leftrightarrow_l (F', \T_i')$ for $i = 1, 2$,
with the same underlying $F \leftrightarrow_l F'$, and let $r > 0$.
Let $f : \T_1 \rightarrow \T_2$ and $f' : \T_1' \rightarrow \T_2'$ be
homomorphisms inducing the same homomorphism
$X^*(\T_2') = X^*(\T_2) \rightarrow X^*(\T_1) = X^*(\T_1')$ at the level
of character lattices. Then, if standard isomorphisms 
$\T_i(F)/\T_i(F)^{\naive}_r \rightarrow \T_i'(F')/\T_i'(F')^{\naive}_r$
exist for $i = 1, 2$, they are the vertical arrows of the following
commutative diagram:
\[
\xymatrix{
\T_1(F)/\T_1(F)^{\naive}_r \ar[r]^{f} \ar[d] & \T_2(F)/\T_2(F)^{\naive}_r \ar[d] \\
\T_1'(F')/\T_1'(F')^{\naive}_r \ar[r]^{f'} & \T_2'(F')/\T_2'(F')^{\naive}_r
}.
\]
\end{lm}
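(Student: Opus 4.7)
The plan is to check commutativity element-wise via the character-theoretic description of standard correspondents. Fix $t_1 \in \T_1(F)$ and let $t_1' \in \T_1'(F')$ be its image under the standard isomorphism for $\T_1$; we must show that $(f(t_1), f'(t_1'))$ is a standard-correspondent pair for the realization $(F, \T_2) \leftrightarrow_l (F', \T_2')$.

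By the hypothesis on existence of standard isomorphisms, there exist splitting fields $L_i$ of $\T_i$ with $r \lleq_{L_i} l$, realized via compatible embeddings $L_i \hookrightarrow F^{\sep}$, $L_i' \hookrightarrow {F'}^{\sep}$; set $e_i = e(L_i/F)$. The standard-correspondent condition for $(t_1, t_1')$, checked at $L_1$ via Lemma \ref{lm: standard isomorphism well-defined}, reads: for every $\eta = \eta' \in X^*(\T_1) = X^*(\T_1')$, the values $\eta(t_1), \eta'(t_1')$ correspond in $L_1^{\times}/(1+\p_{L_1}^{\lceil e_1 r\rceil}) \cong {L_1'}^{\times}/(1+\p_{L_1'}^{\lceil e_1 r\rceil})$. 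To deduce the analogous condition for $(f(t_1), f'(t_1'))$ at $L_2$, specialize to $\eta = \chi \circ f$ for each $\chi = \chi' \in X^*(\T_2) = X^*(\T_2')$; the hypothesis on $f, f'$ gives $\eta' = \chi' \circ f'$ under $X^*(\T_1) = X^*(\T_1')$, while $\chi(f(t_1)) = (\chi \circ f)(t_1) = \eta(t_1)$ and analogously for the primed side. Thus the standard-correspondent property of $(t_1, t_1')$ applied to $\eta = \chi \circ f$ yields matching of $\chi(f(t_1))$ and $\chi'(f'(t_1'))$, but at the $L_1$-level rather than the desired $L_2$-level.

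The remaining step, transporting this matching from $L_1$ to $L_2$, is the main obstacle. A key observation is that $\chi \circ f \in X^*(\T_1)$ is $\Gal(F^{\sep}/L_2)$-invariant (since $\chi$ is, and $f$ is defined over $F$), so it descends to a character of $\T_1$ over $L_2$, making $(\chi \circ f)(t_1) \in L_2^{\times}$ well-defined even when $L_2$ does not split $\T_1$. One then compares values via the compositum $M = L_1 L_2$: the commutative diagram of Subsubsection \ref{subsubsec: close local fields multiplicative groups} applied to the inclusions $L_i \hookrightarrow M$ relates the Deligne identifications on $M^{\times}/(1+\p_M^{\psi_{M/F}(l)})$ with those on $L_i^{\times}/(1+\p_{L_i}^{\psi_{L_i/F}(l)})$, and the injectivity of $L_2^{\times}/(1+\p_{L_2}^{A}) \hookrightarrow M^{\times}/(1+\p_M^{A \cdot e(M/L_2)})$ lets us transfer the matching from $L_1$ through $M$ down to $L_2$ at the target level $\lceil e_2 r\rceil$. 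One subtlety is that $r \lleq_M l$ may fail, since $\psi_{M/F}(l)/e(M/F) \leq \psi_{L_i/F}(l)/e(L_i/F)$ by \eqref{eqn: Herbrand function}; consequently the transfer is performed at the ``natural'' level $\psi_{M/F}(l)$ on $M$ rather than at $\lceil e_M r\rceil$, requiring careful bookkeeping of levels. This argument runs closely parallel to the transfer between splitting fields used in the proof of Lemma \ref{lm: standard isomorphism well-defined}.
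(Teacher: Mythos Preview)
Your opening reduction and the key observation that $\chi \circ f$ is $\Gal(F^{\sep}/L_2)$-invariant (so that $\eta(t_1) = \chi(f(t_1)) \in L_2^{\times}$, and likewise on the primed side) are correct and essential. The gap is in the final transfer step through the compositum $M = L_1 L_2$. When $r \lleq_M l$ fails, working at the ``natural'' level $\psi_{M/F}(l)$ on $M$ is genuinely too coarse: to descend to $L_2$ at level $\lceil e_2 r \rceil$ via the injectivity of $L_2^{\times}/(1+\p_{L_2}^{\lceil e_2 r \rceil}) \hookrightarrow M^{\times}/(1+\p_M^{?})$, you need $? \geq e(M/L_2)\lceil e_2 r\rceil \geq e(M/F) r$, which is precisely the failing inequality $\psi_{M/F}(l) \geq e(M/F) r$. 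No amount of bookkeeping recovers this. (Your analogy with Lemma~\ref{lm: standard isomorphism well-defined} is also slightly off: there the two splitting fields are nested after passing to the minimal one, so no compositum is needed.)

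The paper's fix is to factor through the image torus $\T_3 = \mathrm{im}(f) \subset \T_2$. Since $\T_3$ is a quotient of $\T_1$, it is split by $L_1$; since it is a subtorus of $\T_2$, it is split by $L_2$. One then applies the ``easy case'' (a common splitting field $L$ with $r \lleq_L l$) twice: first to $\T_1 \twoheadrightarrow \T_3$ using $L_1$, then to $\T_3 \hookrightarrow \T_2$ using $L_2$, with Lemma~\ref{lm: standard isomorphism well-defined} guaranteeing that standard correspondence for $\T_3$ is the same whether checked via $L_1$ or $L_2$. Equivalently, at the level of your element $a = \eta(t_1)$: you have shown $a \in L_1 \cap L_2 =: K$, and since $K \subset L_i$ forces $\psi_{K/F}(l)/e(K/F) \geq \psi_{L_i/F}(l)/e(L_i/F) \geq r$, one has $r \lleq_K l$; the transfer then goes cleanly $L_1 \to K \to L_2$ via the inclusions, with injectivity holding at every step. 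Going through the intersection rather than the compositum is the point.
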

\begin{remark}
The lemma would be immediate if we had $r \lleq_L l$ for
some $L/F$ splitting both $\T_1$ and $\T_2$.
We are not making this assumption, hence the longer proof.
\end{remark}
\begin{proof}[Proof of Lemma \ref{lm: standard isomorphism functorial}]
The assertion of the lemma is equivalent to the following statement:
if $t_1 \in \T_1(F)$ is a standard correspondent of $t_1' \in \T_1'(F')$,
then $t_2 := f(t_1) \in \T_2(F)$ is a standard correspondent of
$t_2' := f'(t_1') \in \T_2'(F')$.

First, we consider a slightly different situation. We make
the stronger assumption that there exists a finite separable
extension $L/F$, splitting both $\T_1$ and $\T_2$, such
that $r \lleq_L l$. However, we do not impose the assumption that
either of the standard isomorphisms is well-defined.
Choose compatible embeddings
$L \hookrightarrow F^{\sep}$ and $L' \hookrightarrow {F'}^{\sep}$.

Under these assumptions, the following claim is formal:
if $t_1 \in \T_1(F)$ is a standard correspondent of
$t_1' \in \T_1'(F')$, then $f(t_1) \in \T_2(F)$ is a standard
correspondent of $f'(t_1') \in \T_2'(F')$.

Now consider the general case.
Let $\T_3 \subset \T_2$
be the image of $f$, and $\T_3' \subset \T_2'$ that of $f'$.
Thus, $X^*(\T_3) \subset X^*(\T_1)$ is the image of
$X^*(\T_2) \rightarrow X^*(\T_1)$, and similarly with $X^*(\T_3')$.
We have an obvious realization $(F, \T_3) \leftrightarrow_l (F', \T_3')$.

We have $r \lleq_{\T_3} l$, since any splitting field of
either of $\T_1$ or $\T_2$ also splits $\T_3$. However,
we cannot assume that there exists a standard isomorphism
$\T_3(F)/\T_3(F)^{\naive}_r \rightarrow \T_3'(F')/\T_3'(F')^{\naive}_r$.

Nevertheless, if $t_1 \in \T_1(F)$ is a standard correspondent
of $t_1' \in \T_1'(F')$, then letting $t_2, t_3$ be the images
of $t_1$ in $\T_2(F)$ and $\T_3(F)$,
and $t_2'$ and $t_3'$ those of $t_1'$ in $\T_2'(F')$ and $\T_3'(F')$:
\begin{itemize}
\item $t_3$ is a standard correspondent of $t_3'$: apply the above
claim with $\T_1 \rightarrow \T_2$ and $\T_1' \rightarrow \T_2'$ replaced
by $\T_1 \rightarrow \T_3$ and $\T_1' \rightarrow \T_3'$; and
\item hence $t_2$ is a standard correspondent of $t_2'$: apply
the above claim with $\T_1 \rightarrow \T_2$ and
$\T_1' \rightarrow \T_2'$ replaced
by $\T_3 \rightarrow \T_2$ and $\T_3' \rightarrow \T_2'$.
\end{itemize}
As observed earlier, this implies the lemma.
\end{proof}

\begin{lm} \label{lm: standard isomorphism decreasing r}
Let $(F, \T) \leftrightarrow_l (F', \T')$, and let $r > 0$.
Assume that there is a standard isomorphism
$\T(F)/\T(F)^{\naive}_r \rightarrow \T'(F')/\T'(F')^{\naive}_r$. Then
for all $0 < s \leq r$, this standard isomorphism
induces a standard isomorphism $\T(F)/\T(F)^{\naive}_s
\rightarrow \T'(F')/\T'(F')^{\naive}_s$,
which further restricts to a ``restricted standard isomorphism''
$\T(F)_b/\T(F)^{\naive}_s \rightarrow \T'(F')_b/\T'(F')^{\naive}_s$,
uniquely characterized by the fact that it sends each element of
its source to a standard correspondent of it.
\end{lm}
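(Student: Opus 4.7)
The plan is to bootstrap everything from the compatibility of the isomorphisms
$L^{\times}/(1 + \p_L^{\lceil e r \rceil}) \rightarrow {L'}^{\times}/(1 + \p_{L'}^{\lceil e r \rceil})$
for varying truncation. Fix compatible embeddings $L \hookrightarrow F^{\sep}$ and
$L' \hookrightarrow {F'}^{\sep}$ with $L/F$ a splitting extension of $\T$ such
that $r \lleq_L l$ (hence also $s \lleq_L l$ since $s \leq r$), and set
$e = e(L/F) = e(L'/F')$. Applying the commutative diagram of
Subsubsection \ref{subsubsec: close local fields multiplicative groups} with
$L_1 = L_2 = L$ at the two truncation levels $\lceil e r \rceil$ and
$\lceil e s \rceil$, we see that the natural quotient maps fit into
a commutative square relating the truncated multiplicative groups at levels
$r$ and $s$ on both sides. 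An immediate consequence is that
if $t \in \T(F)$ and $t' \in \T'(F')$ are standard correspondents at
level $r$, they are standard correspondents at level $s$ as well.

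Next I would define $\phi_s \colon \T(F)/\T(F)^{\naive}_s \rightarrow
\T'(F')/\T'(F')^{\naive}_s$ by sending $t \T(F)^{\naive}_s$ to $t' \T'(F')^{\naive}_s$,
where $t' \in \T'(F')$ is any representative of
$\phi_r(t \T(F)^{\naive}_r)$, and $\phi_r$ denotes the given standard
isomorphism at level $r$. The main thing to check, and what I expect
to be the key technical step, is well-definedness: it suffices
to show that for $u \in \T(F)^{\naive}_s$, any representative $u' \in \T'(F')$
of $\phi_r(u \T(F)^{\naive}_r)$ already lies in $\T'(F')^{\naive}_s$. By the
previous paragraph $u$ and $u'$ are standard correspondents at level $s$, so
for each $\chi = \chi' \in X^*(\T) = X^*(\T')$ the images
of $\chi(u)$ and $\chi'(u')$ match under the isomorphism
$L^{\times}/(1 + \p_L^{\lceil e s \rceil}) \rightarrow
{L'}^{\times}/(1 + \p_{L'}^{\lceil e s \rceil})$; since $\chi(u) \equiv 1$
modulo $\p_L^{\lceil e s \rceil}$ and the isomorphism takes $1$ to $1$,
we get $\val_{F'}(\chi'(u') - 1) \geq s$. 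Moreover, since $u \in \T(F)^{\naive}_s
\subset \T(F)_b$, each $\chi(u)$ lies in $\mO_L^{\times}$, and the
subgroup $\mO_L^{\times}/(1 + \p_L^{\lceil e r \rceil}) \subset
L^{\times}/(1 + \p_L^{\lceil e r \rceil})$ is preserved by the isomorphism
of Subsubsection \ref{subsubsec: close local fields multiplicative groups}
(one sees this from the description of the latter in terms of the
graded unit group $\bigoplus_n (\p_L/\p_L^{\lceil e r \rceil + 1})^{\otimes n}$);
thus $\chi'(u') \in \mO_{L'}^{\times}$ for every $\chi'$, giving
$u' \in \T'(L')_b \cap \T'(F') = \T'(F')_b$, and hence $u' \in \T'(F')^{\naive}_s$.

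Once well-definedness is in hand, $\phi_s$ is automatically a homomorphism
(inherited from $\phi_r$), it sends each element of its source to a
standard correspondent (by the first paragraph together with the definition
of $\phi_r$), and bijectivity follows by running the same well-definedness
argument for $\phi_r^{-1}$, which is itself a standard isomorphism. Hence
$\phi_s$ is a standard isomorphism. For the restriction to
$\T(F)_b/\T(F)^{\naive}_s \rightarrow \T'(F')_b/\T'(F')^{\naive}_s$, the same unit-subgroup preservation
argument used above shows: for $t \in \T(F)_b$, every representative
of $\phi_r(t \T(F)^{\naive}_r)$ lies in $\T'(F')_b$; thus $\phi_s$
carries $\T(F)_b/\T(F)^{\naive}_s$ into $\T'(F')_b/\T'(F')^{\naive}_s$, and a
symmetric argument gives surjectivity onto the latter. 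Uniqueness of a
restricted standard isomorphism characterized by sending each element of
its source to a standard correspondent is immediate from
Lemma \ref{lm: standard isomorphism well-defined}. The main obstacle, as
indicated, is the well-definedness computation --- specifically, ensuring
both the ``close to $1$'' and the ``integral unit'' parts of
the definition of $\T'(F')^{\naive}_s$ are simultaneously transported
across $\phi_r$; once the preservation of the unit subgroup is extracted from
Subsubsection \ref{subsubsec: close local fields multiplicative groups},
the rest is a straightforward bookkeeping.
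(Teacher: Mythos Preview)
Your proposal is correct and follows essentially the same approach as the paper's proof: the paper records the same two key facts you use --- that the isomorphism $L^{\times}/(1 + \p_L^{l_1}) \rightarrow {L'}^{\times}/(1 + \p_{L'}^{l_1})$ induces compatible isomorphisms at all lower truncation levels, and that $t \in \T(F)_b$ if and only if $\chi(t) \in \mO_L^{\times}$ for all $\chi$ --- and simply declares the rest ``easy''; you have spelled out the bookkeeping (well-definedness, bijectivity, preservation of the bounded part) that the paper leaves implicit.
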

\begin{proof}
Easy, using the following two facts.
First, whenever $L \leftrightarrow_{l_1} L'$,
the resulting isomorphism
$L^{\times}/(1 + \p_L^{l_1}) \rightarrow {L'}^{\times}/(1 + \p_{L'}^{l_1})$
induces, for all $0 < s \leq l_1$, an isomorphism
$L^{\times}/(1 + \p_L^{\lceil s \rceil})
\rightarrow {L'}^{\times}/(1 + \p_{L'}^{\lceil s \rceil})$.
Secondly, given $t \in \T(F)$ and a finitely ramified separable
extension $L \hookrightarrow F^{\sep}$ that splits $\T$,
we have $t \in \T(F)_b$ if and only if for all $\chi \in X^*(\T)$,
the element $\chi(t)$ of $L^{\times}$ belongs to $\mO_L^{\times}$.
\end{proof}

\begin{lm} \label{lm: standard isomorphism functorial 2}
Standard isomorphisms also have the following functoriality.
Let $(F, \T) \leftrightarrow_l (F', \T')$.
For $i = 1, 2$, let $E_i \hookrightarrow F^{\sep}$ and
$E_i' \hookrightarrow {F'}^{\sep}$ be compatible embeddings, and assume that
there is a factorization
$E_1 \hookrightarrow F^{\sep} = (E_2 \hookrightarrow F^{\sep})
\circ (E_1 \hookrightarrow E_2)$, giving an analogous
factorization $E_1' \hookrightarrow {F'}^{\sep}
= (E_2' \hookrightarrow {F'}^{\sep})
\circ (E_1' \hookrightarrow E_2')$. Then, if for $i = 1, 2$,
standard isomorphisms
$\T(E_i)/\T(E_i)^{\naive}_{r_i} \rightarrow \T'(E_i')/\T'(E_i')^{\naive}_{r_i}$
exist (as before, using
Notation \ref{notn: close local fields tori}\eqref{close local fields tori restriction}
to make sense of the
$(E_i, \T_{E_i}) \leftrightarrow_{\psi_{E_i/F}(l)} (E_i', \T'_{E_i'})$),
and $r_2 \leq e(E_2/E_1) r_1$, then 
these are the vertical arrows of the following commutative diagram, whose
horizontal arrows are induced by $\T(E_1) \rightarrow \T(E_2)$
and $\T'(E_1') \rightarrow \T'(E_2')$:
\[
\xymatrix{
\T(E_1)/\T(E_1)^{\naive}_{r_1}
\ar[r] \ar[d] & \T(E_2)/\T(E_2)^{\naive}_{r_2} \ar[d] \\
\T'(E_1')/\T'(E_1')^{\naive}_{r_1}
\ar[r] & \T'(E_2')/\T'(E_2')^{\naive}_{r_2}
}.
\]
Note that we do not assume the existence of any standard isomorphism
\[\T(F)/\T(F)^{\naive}_r \rightarrow \T'(F')/\T'(F')^{\naive}_r.\]
\end{lm}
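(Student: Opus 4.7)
The statement is equivalent to showing that if $t_1 \in \T(E_1)$ is a standard correspondent of $t_1' \in \T'(E_1')$ at parameter $r_1$, then the image $t_2 \in \T(E_2)$ of $t_1$ under $\T(E_1) \hookrightarrow \T(E_2)$ is a standard correspondent of the image $t_2' \in \T'(E_2')$ of $t_1'$ at parameter $r_2$. My plan is to verify this directly by reducing both conditions to a single chain of splitting fields $L_1 \subseteq L_2$ and then applying the commutative diagram at the end of Subsubsection \ref{subsubsec: close local fields multiplicative groups}.

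First, using the hypothesis on level $E_2$ and Remark \ref{rmk: unique standard isomorphism}\eqref{unique standard isomorphism general T}, I would choose compatible embeddings $L_2 \hookrightarrow F^{\sep}$ and $L_2' \hookrightarrow {F'}^{\sep}$ with $L_2/E_2$ a splitting extension of $\T$ satisfying $r_2 \leq \psi_{L_2/F}(l)/e(L_2/E_2)$. I would then let $L_1 \subseteq L_2$ be the minimal splitting field of $\T$ over $E_1$ (which automatically lies in $L_2$ since $L_2$ splits $\T_{E_1}$), and take $L_1' \subseteq L_2'$ to be the compatible counterpart from Subsubsection \ref{subsubsec: close local fields variation}. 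The key technical claim is that $L_1$ automatically witnesses the level-$E_1$ standard correspondent condition: for any witness $L_1^{*}$ one has $L_1 \subseteq L_1^{*}$, and Remark \ref{rmk: at most l ramified extensions}\eqref{l(1)} applied to $L_1^{*}/L_1$ gives
\[
\psi_{L_1^{*}/F}(l) = \psi_{L_1^{*}/L_1}(\psi_{L_1/F}(l)) \leq e(L_1^{*}/L_1) \cdot \psi_{L_1/F}(l);
\]
dividing through by $e(L_1^{*}/E_1) = e(L_1^{*}/L_1) \cdot e(L_1/E_1)$ yields $\psi_{L_1/F}(l)/e(L_1/E_1) \geq \psi_{L_1^{*}/F}(l)/e(L_1^{*}/E_1) \geq r_1$. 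By Lemma \ref{lm: standard isomorphism well-defined} the level-$E_1$ condition may then be verified using $L_1$.

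With $L_1 \subseteq L_2$ in place, set $k_i := \lceil e(L_i/E_i) r_i \rceil$. The hypothesis $r_2 \leq e(E_2/E_1) r_1$ combined with the identity $e(L_2/E_2) \cdot e(E_2/E_1) = e(L_2/E_1) = e(L_2/L_1) \cdot e(L_1/E_1)$ gives $k_2 \leq e(L_2/L_1) \cdot k_1$, so the commutative diagram of Subsubsection \ref{subsubsec: close local fields multiplicative groups} for $F \subseteq L_1 \subseteq L_2$ restricts to these truncations. For each $\chi = \chi' \in X^{*}(\T) = X^{*}(\T')$ one has $\chi(t_2) = \chi(t_1) \in L_2^{\times}$ and $\chi'(t_2') = \chi'(t_1') \in {L_2'}^{\times}$ (the torus element being unchanged when viewed in successively larger groups of points), so the level-1 matching of $\chi(t_1)$ with $\chi'(t_1')$ modulo $(1 + \p_{L_1}^{k_1})$ pushes forward to the desired level-2 matching of $\chi(t_2)$ with $\chi'(t_2')$ modulo $(1 + \p_{L_2}^{k_2})$. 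The main obstacle is arranging a single chain witnessing both levels simultaneously, which is resolved by the optimal choice of $L_1$ as the minimal splitting field over $E_1$.
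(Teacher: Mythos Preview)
Your argument is correct, but it takes a somewhat different route from the paper's. The paper first uses Lemma \ref{lm: standard isomorphism decreasing r} to reduce to the case $r_2 = e(E_2/E_1) r_1$, and then works with a \emph{single} splitting field $L$: it chooses $L$ as a witness for the level-$E_2$ condition and verifies directly, via $\psi_{L/E_2}(l_2) = \psi_{L/E_1}(l_1)$, that the same $L$ also witnesses the level-$E_1$ condition. With $r_2 = e(E_2/E_1) r_1$ one then has $e(L/E_1) r_1 = e(L/E_2) r_2$, so both vertical arrows are literally restrictions of the \emph{same} standard isomorphism on $\T(L)/\T(L)^{\naive}_{e(L/E_1) r_1}$, and commutativity is immediate. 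Your version instead keeps two fields $L_1 \subset L_2$, uses minimality of $L_1$ over $E_1$ to force it to be a valid witness (essentially the observation the paper phrases as ``making $L$ smaller increases $\psi_{L/F}(l)/e(L/F)$''), and then invokes the commutative square of Subsubsection \ref{subsubsec: close local fields multiplicative groups} to pass from $L_1$ to $L_2$. This works, but is slightly more laborious; in particular your citation of Remark \ref{rmk: at most l ramified extensions}\eqref{l(1)} for the inequality $\psi_{L_1^{*}/L_1}(x) \le e(L_1^{*}/L_1)\, x$ at an arbitrary $x$ is better justified directly by \eqref{eqn: Herbrand function}. The paper's single-$L$ reduction buys a cleaner endgame, while your two-field approach avoids the preliminary reduction on $r_1, r_2$ at the cost of tracking one more commutative square.
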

\begin{proof}
Since $r_2 \leq e(E_2/E_1) r_1$, the horizontal arrows are
well-defined. By Lemma \ref{lm: standard isomorphism decreasing r},
we may decrease $r_1$ if necessary, to assume that $r_2 = e(E_2/E_1) r_1$.

Choose compatible embeddings $L \hookrightarrow F^{\sep}$
and $L' \hookrightarrow {F'}^{\sep}$ for the realization
$E_2 \leftrightarrow_{l_2} E_2'$ (see Subsubsection \ref{subsubsec: TrE}),
splitting $\T$, and such that $r_2 \lleq_{\T_{E_2}} l_2$.
Thus, we have
\[ r_1 = r_2/e(E_2/E_1) \leq \psi_{L/E_2}(l_2)/e(L/E_1)
= \psi_{L/E_2}(\psi_{E_2/F}(l))/e(L/E_1) = \psi_{L/E_1}(l_1)/e(L/E_1). \]
Thus, we also have $r_1 \lleq_{\T_{E_1}} l_1$.
Note that $L \hookrightarrow F^{\sep}$
and $L' \hookrightarrow {F'}^{\sep}$
are also compatible embeddings for $F \leftrightarrow_l F'$ (since
their stabilizers in $\Gamma_F/I_F^l = \Gamma_{F'}/I_{F'}^l$
are contained in $\Gamma_{E_2}/I_{E_2}^{l_2} = \Gamma_{E_2'}/I_{E_2'}^{l_2}$),
and hence also for $E_1 \leftrightarrow_{l_1} E_1'$.
All these descriptions give
the same realization $L \leftrightarrow_{l^{\circ}} L'$,
where $l^{\circ} = \psi_{L/E_2}(l_2) = \psi_{L/E_1}(l_1)$
(see Remark \ref{rmk: properties E l(1) E' rephrasing}).

Thus, by Remark \ref{rmk: unique standard isomorphism},
both the vertical arrows are obtained by restriction from
the standard isomorphism
$\T(L)/\T(L)^{\naive}_{e(L/E_i) r_i}
\rightarrow \T'(L')/\T'(L')^{\naive}_{e(L/E_i) r_i}$
(with $e(L/E_i) r_i$ independent of $i$), and the lemma follows. 
\end{proof}

\begin{cor} \label{cor: standard isomorphism functorial 2}
Let $(F, \T) \leftrightarrow_l (F', \T')$.
Let $E \hookrightarrow F^{\sep}$
and $E' \hookrightarrow {F'}^{\sep}$ be compatible embeddings,
and let $r > 0$. Assume that $E/F$ is Galois
(and hence so is $E'/F'$), and that a standard isomorphism
$\T(E)/\T(E)^{\naive}_r \rightarrow \T'(E')/\T'(E')^{\naive}_r$,
associated to $(E, \T_E) \leftrightarrow_{\psi_{E/F}(l)} (E', \T'_{E'})$,
exists. Then this isomorphism is equivariant for the action
of $\Gamma_{E/F} = \Gamma_{E'/F'}$.
\end{cor}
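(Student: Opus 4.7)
\medskip

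\noindent\textbf{Proof proposal.} The plan is to lift the standard isomorphism over $E$ to one over a common Galois splitting field, where $\Gamma_{L/F}$-equivariance is manifest, and then descend. Choose compatible embeddings $L \hookrightarrow F^{\sep}$ and $L' \hookrightarrow {F'}^{\sep}$ with $L/F$ a finite Galois at most $l$-ramified extension splitting $\T$, with $L \supset E$ (e.g.\ the fixed field of $\ker(\Gamma_F \to \Aut(X^*(\T)))$, which is Galois over $F$ because $E/F$ is, and is at most $l$-ramified as $I_F^l$ acts trivially on $X^*(\T)$). By Remark \ref{rmk: at most l ramified extensions}\eqref{IFl IEl(1)}, $L/E$ is automatically at most $l(1) := \psi_{E/F}(l)$-ramified, and by Remark \ref{rmk: properties E l(1) E' rephrasing} the resulting realization $L \leftrightarrow_{l^\circ} L'$, with $l^\circ = \psi_{L/E}(l(1)) = \psi_{L/F}(l)$, agrees whether built from $E \leftrightarrow_{l(1)} E'$ or from $F \leftrightarrow_l F'$. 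After possibly shrinking $r$ via Lemma \ref{lm: standard isomorphism decreasing r}, we may also arrange $r \lleq_L l(1)$.

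Since $\T_L$ and $\T'_{L'}$ are split, by Remark \ref{rmk: unique standard isomorphism}\eqref{unique standard isomorphism} the standard isomorphism at level $L$,
\[
\T(L)/\T(L)^{\naive}_{e(L/E) r} \;\longrightarrow\; \T'(L')/\T'(L')^{\naive}_{e(L/E) r},
\]
exists and factors explicitly as
\[
\T(L)/\T(L)^{\naive}_{e(L/E)r} \cong \Hom(X^*(\T), L^\times/(1+\p_L^{\lceil e(L/E) r\rceil})) \cong \Hom(X^*(\T'), {L'}^\times/(1+\p_{L'}^{\lceil e(L/E) r\rceil})) \cong \T'(L')/\T'(L')^{\naive}_{e(L/E) r}.
\]
The middle identification is $\Gamma_{L/F} = \Gamma_{L'/F'}$-equivariant because both building blocks are: the identification $X^*(\T) = X^*(\T')$ is $\Gamma_F/I_F^l = \Gamma_{F'}/I_{F'}^l$-equivariant by the definition of $(F, \T) \leftrightarrow_l (F', \T')$ in Notation \ref{notn: close local fields tori}\eqref{close local fields tori}, and the isomorphism $L^\times/(1+\p_L^{\lceil e(L/E) r\rceil}) \to {L'}^\times/(1+\p_{L'}^{\lceil e(L/E) r\rceil})$ is $\Gamma_{L/F} = \Gamma_{L'/F'}$-invariant by the last sentence of Subsubsection \ref{subsubsec: close local fields multiplicative groups} (applicable because $L/F$ is Galois).

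Finally, by Remark \ref{rmk: unique standard isomorphism}\eqref{unique standard isomorphism general T}, our hypothesized standard isomorphism $\T(E)/\T(E)^{\naive}_r \to \T'(E')/\T'(E')^{\naive}_r$ is the restriction of the one at level $L$ along the inclusion $\T(E)/\T(E)^{\naive}_r \hookrightarrow \T(L)/\T(L)^{\naive}_{e(L/E) r}$ and the analogous inclusion on the primed side. Since $E/F$ is Galois, $\Gamma_{L/E}$ is normal in $\Gamma_{L/F}$, so $\T(E) = \T(L)^{\Gamma_{L/E}}$ is stable under the $\Gamma_{L/F}$-action and the induced action of $\Gamma_{E/F} = \Gamma_{L/F}/\Gamma_{L/E}$ on this fixed subgroup is well-defined (and matches the intrinsic one); the same holds on the primed side under $\Gamma_{E'/F'} = \Gamma_{L'/F'}/\Gamma_{L'/E'}$. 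The restriction of a $\Gamma_{L/F}$-equivariant map is therefore $\Gamma_{E/F} = \Gamma_{E'/F'}$-equivariant, which is what was claimed. The only mildly delicate point is verifying that the two realizations of $L \leftrightarrow_{l^\circ} L'$ (from $F \leftrightarrow_l F'$ versus from $E \leftrightarrow_{l(1)} E'$) coincide, so that the standard isomorphism used on the right of the inclusion is the same as the one induced from the realization at $E$; this is precisely the content of Remark \ref{rmk: properties E l(1) E' rephrasing}.
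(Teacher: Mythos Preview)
Your approach is the one underlying the paper's proof, which simply invokes Lemma~\ref{lm: standard isomorphism functorial 2}; both lift to a Galois splitting field, use the explicit $\Hom(X^*(\T),L^\times/\cdots)$ description there together with the $\Gamma_{L/F}$-equivariance recorded at the end of Subsubsection~\ref{subsubsec: close local fields multiplicative groups}, and restrict back.

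There is, however, a genuine slip in your choice of $L$. The fixed field of $\ker(\Gamma_F \to \Aut(X^*(\T)))$ is the minimal splitting field $L_0$ of $\T$ over $F$; it is Galois over $F$ because that kernel is normal (this has nothing to do with $E/F$), but it need \emph{not} contain $E$. The correct choice is the compositum $L = L_0 \cdot E$: this is Galois over $F$ since both factors are, is at most $l$-ramified, splits $\T$, and contains $E$. Moreover $L$ is then the \emph{minimal} splitting field of $\T_E$ over $E$, so the monotonicity $\psi_{L/E}(l(1))/e(L/E) \geq \psi_{M/E}(l(1))/e(M/E)$ for any larger splitting field $M$ (a consequence of \eqref{eqn: Herbrand function}), combined with the hypothesis $r \lleq_{\T_E} l(1)$ implicit in the existence of the standard isomorphism, already gives $r \lleq_L l(1)$. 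No shrinking of $r$ is needed, and in fact your ``shrink $r$'' step does not prove what you want: equivariance of the induced map at level $r' < r$ says nothing about equivariance at level $r$, since the former is only a quotient of the latter. With $L = L_0 \cdot E$ the rest of your argument goes through cleanly.
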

\begin{proof}
This is a special case of Lemma \ref{lm: standard isomorphism functorial 2}.
\end{proof}

\subsection{Further properties of standard isomorphisms}

In this subsection, we prove less obvious properties of standard
isomorphisms: their existence when $F$ is strictly Henselian,
and compatibility with Kottwitz homomorphisms and the
local Langlands correspondence.

\begin{pro} \label{pro: standard isomorphism strictly Henselian}
Let $(F, \T) \leftrightarrow_l (F', \T')$, and let
$0 < r \lleq_{\T} l$. If further $F$ is strictly Henselian
(and hence so is $F'$), a standard isomorphism
$\T(F)/\T(F)^{\naive}_r \rightarrow \T'(F')/\T'(F')^{\naive}_r$ exists.
\end{pro}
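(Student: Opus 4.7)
Since $r \lleq_{\T} l$, choose a finite Galois extension $L/F$ splitting $\T$ with $r \lleq_L l$; strict Henselianness of $F$ forces $L/F$ to be totally ramified of degree $e := [L:F]$. Fix compatible embeddings $L \hookrightarrow F^{\sep}$ and $L' \hookrightarrow {F'}^{\sep}$; the corresponding $L'/F'$ is also Galois and totally ramified of degree $e$, and we obtain a realization $(L, \T_L) \leftrightarrow_{l(1)} (L', \T'_{L'})$ with $l(1) := \psi_{L/F}(l) \geq er$. Since $\T_L$ and $\T'_{L'}$ are split, Remark~\ref{rmk: unique standard isomorphism}\eqref{unique standard isomorphism} furnishes a unique standard isomorphism $\phi_L \colon \T(L)/\T(L)^{\naive}_{er} \to \T'(L')/\T'(L')^{\naive}_{er}$, and Corollary~\ref{cor: standard isomorphism functorial 2} shows it is $\Gamma := \Gamma_{L/F} = \Gamma_{L'/F'}$-equivariant. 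Concretely, under $\T_L \cong \Gm^{\dim \T}$ and the identification $X^*(\T) = X^*(\T')$, the map $\phi_L$ is obtained by tensoring $X_*(\T)$ with the $\Gamma$-equivariant Deligne isomorphism $L^\times/(1 + \p_L^{\lceil er \rceil}) \cong L'^\times/(1 + \p_{L'}^{\lceil er \rceil})$.

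Since $\T(F) = \T(L)^\Gamma$ by Galois descent and $\T(F)^{\naive}_r = \T(F) \cap \T(L)^{\naive}_{er}$ (Remark~\ref{rmk: naive filtration}), we obtain an injection $\T(F)/\T(F)^{\naive}_r \hookrightarrow (\T(L)/\T(L)^{\naive}_{er})^\Gamma$, and analogously on the $F'$-side. As $\phi_L$ is $\Gamma$-equivariant and sends standard correspondents to standard correspondents, it is enough to show that $\phi_L$ carries the image of $\T(F)/\T(F)^{\naive}_r$ onto that of $\T'(F')/\T'(F')^{\naive}_r$ inside the common target; the sought standard isomorphism will then arise by restriction.

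To effect this comparison, I turn to Galois cohomology. From the short exact sequence of $\Gamma$-modules
\[
0 \to \T(L)^{\naive}_{er} \to \T(L) \to \T(L)/\T(L)^{\naive}_{er} \to 0
\]
the image of $\T(F) = \T(L)^\Gamma$ in $(\T(L)/\T(L)^{\naive}_{er})^\Gamma$ equals the kernel of the connecting map $\delta_F \colon (\T(L)/\T(L)^{\naive}_{er})^\Gamma \to H^1(\Gamma, \T(L)^{\naive}_{er})$; an analogous statement holds with $\delta_{F'}$ on the $F'$-side. Using the splittings $\T(L) = X_*(\T) \otimes L^\times$ and $\T(L)^{\naive}_{er} = X_*(\T) \otimes (1 + \p_L^{\lceil er \rceil})$, these $H^1$'s map naturally into $H^1(\Gamma, X_*(\T) \otimes L^\times) = H^1(\Gamma, \T(L))$, and so for $t \in \T(F)$ the class $\delta_F(\bar t)$, being a coboundary in $\T(L)$, will vanish as soon as the map $H^1(\Gamma, X_*(\T) \otimes (1 + \p_L^{\lceil er \rceil})) \to H^1(\Gamma, \T(L))$ is injective.

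The main obstacle is precisely to establish this injectivity, or more generally to certify that the image of $\T(F)$ in $(\T(L)/\T(L)^{\naive}_{er})^\Gamma$ is intrinsic to the $\Gamma$-module $L^\times/(1 + \p_L^{l(1)})$ (which is Deligne-identified with its $F'$-counterpart). Following the Kottwitz-style template of \cite[Proposition 11.1.1]{KP23}, I would handle this by applying Hilbert 90 to $L^\times$ together with a Shapiro/induced-torus analysis of $X_*(\T) \otimes L^\times$ coming from the embedding $\T \hookrightarrow R := \Res_{L/F} \T_L$ (for which $H^1(F, R) = 0$), exploiting the freeness of $X_*(\T)$ over $\ZZ$. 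The same cohomological argument applied on the $F'$-side then shows that the images of $\T(F)$ and $\T'(F')$ in the common Deligne-identified group $(\T(L)/\T(L)^{\naive}_{er})^\Gamma = (\T'(L')/\T'(L')^{\naive}_{er})^\Gamma$ coincide under $\phi_L$, yielding the claimed standard isomorphism.
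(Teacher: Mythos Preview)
Your setup matches the paper's: pass to a finite Galois splitting extension $L/F$, form the standard isomorphism $\phi_L$ at level $L$ (where both tori are split), and use $\Gamma$-equivariance (Corollary~\ref{cor: standard isomorphism functorial 2}). The gap is at the descent step.

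You propose to show that the image of $\T(F)/\T(F)^{\naive}_r$ in $(\T(L)/\T(L)^{\naive}_{er})^\Gamma$ is \emph{all} of the $\Gamma$-invariants, via injectivity of $H^1(\Gamma,\T(L)^{\naive}_{er})\to H^1(\Gamma,\T(L))$. But $H^1(\Gamma,\T(L))=0$ (as $H^1(F,\T)=0$ over a strictly Henselian field), so this injectivity is the same as $H^1(\Gamma,\T(L)^{\naive}_{er})=0$. You do not prove this, and the sketch ``Hilbert~90 plus Shapiro for $R=\Res_{L/F}\T_L$'' does not yield it: Shapiro gives $H^1(\Gamma,R(L)^{\naive}_{er})=0$, and the long exact sequence for $0\to \T(L)^{\naive}_{er}\to R(L)^{\naive}_{er}\to Q(L)^{\naive}_{er}\to 0$ then reduces the question to surjectivity of $R(F)^{\naive}_r\to Q(F)^{\naive}_r$, which is again not clear for wildly ramified $L/F$. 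In fact there is no reason to expect $H^1(\Gamma,X_*(\T)\otimes U_L^{(n)})$ (diagonal action) to vanish when $|\Gamma|$ is divisible by the residue characteristic.

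The paper sidesteps this entirely. Rather than characterising the image of $\T(F)$ as the full $\Gamma$-invariants, it characterises it as the \emph{image of the norm}: since $\phi_L$ is $\Gamma$-equivariant it commutes with $N_{L/F}=\prod_{\sigma\in\Gamma}\sigma$, and the norm $N_{L/F}\colon \T(L)\to \T(F)$ is surjective because the kernel of $\Res_{L/F}\T_L\to\T$ is a connected torus with vanishing $H^1$ over the strictly Henselian $F$ (\cite[Corollary~2.3.7 and Lemma~2.5.4]{KP23}). Hence the image of $\T(F)/\T(F)^{\naive}_r$ inside $\T(L)/\T(L)^{\naive}_{er}$ equals $N_{L/F}\bigl(\T(L)/\T(L)^{\naive}_{er}\bigr)$, which $\phi_L$ carries to $N_{L'/F'}\bigl(\T'(L')/\T'(L')^{\naive}_{er}\bigr)$, i.e.\ to the image of $\T'(F')/\T'(F')^{\naive}_r$. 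This uses only the vanishing of $H^1(F,\text{torus})$, not any cohomology of deep filtration subgroups, and works uniformly in the wild case.
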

\begin{proof}
Choose a finite separable extension $L/F$, splitting $\T$,
such that $r \lleq_L l$. Without loss of generality, $L/F$ is minimal such
(making $L$ smaller increases $\psi_{L/F}(l)/e(L/F)$),
and hence Galois. Let $L \hookrightarrow F^{\sep}$
and $L' \hookrightarrow {F'}^{\sep}$ be compatible embeddings, so
$L \leftrightarrow_{\psi_{L/F}(l)} L'$. Abbreviate $e  := e(L/F)$.

We have a standard isomorphism
$\T(L)/\T(L)^{\naive}_{er} \rightarrow \T'(L')/\T'(L')^{\naive}_{er}$,
since $L$ splits $\T$ and $e r \leq \psi_{L/F}(l)$.
It suffices to show that this isomorphism restricts to an isomorphism 
$\T(F)/\T(F)^{\naive}_r\to\T'(F')/\T'(F')^{\naive}_r$.

The isomorphism
$\T(L)/\T(L)^{\naive}_{er} \rightarrow \T'(L')/\T'(L')^{\naive}_{er}$
is equivariant for $\Gamma_{L/F} = \Gamma_{L'/F'}$,
by Corollary \ref{cor: standard isomorphism functorial 2}, and hence
for the action of the product $N_{L/F} = N_{L'/F'}$ 
of the elements of $\Gamma_{L/F} = \Gamma_{L'/F'}$. Therefore,
it suffices to show that $N_{L/F} : \T(L) \rightarrow \T(F)$
and $N_{L'/F'} : \T'(L') \rightarrow \T'(F')$ are surjective,
or equivalently, that the analogous maps
$\Res_{L/F} \T_L \rightarrow \T$ and $\Res_{L'/F'} \T'_{L'} \rightarrow \T'$
are surjective respectively at the levels of $F$-rational points and
$F'$-rational points. This is well-known:
the kernel $\T_0$ of $N_{L/F} :
\Res_{L/F} \T_L \rightarrow \T$
is connected, and $H^1(F, \T_0) = 0$ by
\cite[Corollary 2.3.7 and Lemma 2.5.4]{KP23}, since $F$ is
strictly Henselian and $\kappa_F$ is perfect.
\end{proof}

\begin{pro} \label{pro: standard isomorphism Kottwitz homomorphism}
Standard isomorphisms have the following compatibility with
Kottwitz homomorphisms. Let $(F, \T) \leftrightarrow_l (F', \T')$,
and assume that a standard isomorphism
$\T(F)/\T(F)^{\naive}_r \rightarrow \T'(F')/\T'(F')^{\naive}_r$ exists.
Then it is the left vertical arrow of the following commutative diagram, whose
horizontal arrows are the relevant Kottwitz homomorphisms, and
whose right vertical arrow is an isomorphism induced by the
$\Gamma_F/I_F^l = \Gamma_{F'}/I_{F'}^l$-equivariant identification
$X_*(\T) = X_*(\T')$.
\[
\xymatrix{
\T(F)/\T(F)^{\naive}_r \ar[r] \ar[d] & (X_*(\T)_{I_F})^{\Gamma_{\kappa_F}}
\ar[d] \\
\T'(F')/\T'(F')^{\naive}_r \ar[r] & (X_*(\T')_{I_{F'}})^{\Gamma_{\kappa_{F'}}}
}
\]
 (subscripting with $I_F$ or $I_{F'}$ stands for taking
the group of $I_F$-coinvariants or $I_{F'}$-coinvariants).
\end{pro}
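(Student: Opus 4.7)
The plan is to leverage the explicit description of the Kottwitz map in terms of a splitting field, and to observe that standard correspondents have the same valuations on that splitting field. Choose a finite Galois extension $L/F$ splitting $\T$ with $r \lleq_L l$, and compatible embeddings $L \hookrightarrow F^{\sep}$ and $L' \hookrightarrow {F'}^{\sep}$ giving a realization $L \leftrightarrow_{l(1)} L'$ (where $l(1) = \psi_{L/F}(l)$) with $L'/F'$ splitting $\T'$; set $e = e(L/F) = e(L'/F')$.

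According to the construction of the Kottwitz map for tori in \cite[Section 11.1]{KP23} (refining \cite[Section 7]{Kot97}), the image of $t \in \T(F) \subset \T(L) = \Hom_{\ZZ}(X^*(\T), L^{\times})$ in $(X_*(\T)_{I_F})^{\Gamma_{\kappa_F}}$ is determined in a natural way, up to a normalization factor depending only on $e$, by the element $\tilde{\kappa}_L(t) \in X_*(\T) = \Hom_{\ZZ}(X^*(\T), \ZZ)$ defined by $\chi \mapsto \val_L(\chi(t))$, via the canonical projection $X_*(\T) \to (X_*(\T)_{I_F})^{\Gamma_{\kappa_F}}$; the analogous description holds for $\T'/F'$ using $L'$. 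Now suppose $t \in \T(F)$ and $t' \in \T'(F')$ are standard correspondents. By Notation \ref{notn: standard congruent Chai-Yu}, for each $\chi = \chi' \in X^*(\T) = X^*(\T')$, the images of $\chi(t)$ and $\chi'(t')$ match under the isomorphism $L^{\times}/(1 + \p_L^{\lceil e r \rceil}) \to {L'}^{\times}/(1 + \p_{L'}^{\lceil e r \rceil})$ from Subsubsection \ref{subsubsec: close local fields multiplicative groups}. Since $\lceil e r \rceil \geq 1$ and Deligne's isomorphism carries uniformizers to uniformizers, this isomorphism preserves the normalized valuations, so $\val_L(\chi(t)) = \val_{L'}(\chi'(t'))$ for every $\chi$, i.e., $\tilde{\kappa}_L(t) = \tilde{\kappa}_{L'}(t')$ as elements of $X_*(\T) = X_*(\T')$. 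Since $I_F^l$ (resp.\ $I_{F'}^l$) acts trivially on $X_*(\T)$ (resp.\ $X_*(\T')$), the projections of these two $\Gamma_F/I_F^l = \Gamma_{F'}/I_{F'}^l$-modules onto their $I_F$-coinvariants and then onto $\Gamma_{\kappa_F}$-invariants are matched by the identification $X_*(\T) = X_*(\T')$, so the Kottwitz images of $t$ and $t'$ agree, giving the desired commutativity.

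The main obstacle will be justifying the explicit splitting-field formula for the Kottwitz map cited above and checking that the normalization convention is identical on both sides, which amounts to a careful unpacking of \cite[Section 11.1]{KP23}. An alternative, slightly more abstract route, avoiding this explicit formula, is to reduce to the case of induced tori $\T = \Res_{L/F} \Gm$ via functoriality of both standard isomorphisms (Lemma \ref{lm: standard isomorphism functorial}) and the Kottwitz map along the diagonal embedding $\T \hookrightarrow \Res_{L/F} \T_L$ (together with Lemma \ref{lm: tori automorphisms} for compatibility across $F$ and $F'$) and a chosen basis of $X^*(\T_L)$; in the induced case the Kottwitz map is essentially $\val_L$, and the compatibility with the standard isomorphism follows directly from the construction reviewed in Subsubsection \ref{subsubsec: close local fields multiplicative groups}.
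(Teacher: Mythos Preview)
Your main approach has the right intuition --- standard correspondents do have the same valuation data $\tilde\kappa_L(t) = \tilde\kappa_{L'}(t')$ in $X_*(\T) = X_*(\T')$ --- but the step linking this to the Kottwitz image is where the argument breaks. The claimed description, that $\kappa_\T(t)$ is the image of $\tilde\kappa_L(t)$ under the canonical projection $X_*(\T) \to (X_*(\T)_{I_F})^{\Gamma_{\kappa_F}}$ up to a scalar depending only on $e$, is not something one can read off from \cite[Section~11.1]{KP23}; the Kottwitz map is constructed there by functoriality, not by such a formula. If one tries to derive the formula, the correct scalar is $[L:F]$ rather than $e$, and more seriously, making sense of division by $[L:F]$ in the group $X_*(\T)_{I_F}$, which may have torsion, already requires knowing that every $t \in \T(F)$ is a norm from $\T(L)$ and invoking the norm compatibility \cite[Lemma~11.1.4]{KP23} --- that is, it requires the strictly Henselian reduction and the very ingredients the paper uses. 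You correctly flagged this as ``the main obstacle,'' but it is not merely a matter of unpacking conventions.

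Your alternative route, via the diagonal embedding $\iota\colon\T \hookrightarrow \R = \Res_{L/F}\T_L$ and functoriality of both the standard isomorphism and the Kottwitz map, also has a gap: it only yields $\iota_*(\kappa_\T(t)) = \iota_*(\kappa_{\T'}(t'))$ in $X_*(\R)_{I_F} = X_*(\R')_{I_{F'}}$, and the map $\iota_* \colon X_*(\T)_{I_F} \to X_*(\R)_{I_F}$ need not be injective (take $\T$ to be the norm-one torus for a ramified quadratic $L/F$: then $X_*(\T)_{I_F} \cong \ZZ/2$ while $X_*(\R)_{I_F} \cong \ZZ$, and $\iota_*$ is the zero map). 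So commutativity for $\R$ does not descend to $\T$ along $\iota$.

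The paper's proof avoids both problems by going in the opposite direction. After reducing to the strictly Henselian case via Proposition~\ref{pro: standard isomorphism strictly Henselian} and Lemma~\ref{lm: standard isomorphism functorial 2}, it uses the \emph{surjective} norm map $N_{L/F}\colon \T(L) \twoheadrightarrow \T(F)$ (surjectivity holding because $F$ is strictly Henselian), together with the compatibility \cite[Lemma~11.1.4]{KP23}, which matches $N_{L/F}$ with the natural projection $X_*(\T) \to X_*(\T)_{I_F}$. This map goes the right way --- from the split situation onto the target of interest --- so the split case, which is immediate, pushes forward to give the result for $\T$.
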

\begin{proof}
To make sense of the right vertical arrow, use that
the identification $\Gamma_F/I_F^l = \Gamma_{F'}/I_{F'}^l$
restricts to an identification $I_F/I_F^l = I_{F'}/I_{F'}^l$
and induces an identification $\Gamma_{\kappa_F} = \Gamma_{\kappa_{F'}}$.

If $\T$ is split, the lemma follows from 
Remark \ref{rmk: unique standard isomorphism}\eqref{unique standard isomorphism}
and the following factorization of the Kottwitz homomorphism:
\[
\T(F) = \Hom(X^*(\T), F^{\times}) \overset{\val}{\rightarrow}
\Hom(X^*(\T), \ZZ) = X_*(\T). \]

Choose compatible embeddings $\tilde F \hookrightarrow F^{\sep}$ and
$\tilde F' \hookrightarrow {F'}^{\sep}$, with $\tilde F/F$ a maximal
unramified extension.
By Proposition \ref{pro: standard isomorphism strictly Henselian},
we have a standard isomorphism \[\T(\tilde F)/\T(\tilde F)^{\naive}_r
\rightarrow \T'(\tilde F')/\T'(\tilde F')^{\naive}_r\] associated
to $(\tilde F, \T_{\tilde F}) \leftrightarrow_l (\tilde F', \T'_{\tilde F'})$,
which by Lemma \ref{lm: standard isomorphism functorial 2} restricts to the  
standard isomorphism $\T(F)/\T(F)^{\naive}_r \rightarrow \T'(F')/\T'(F')^{\naive}_r$.
Since the Kottwitz homomorphism too is defined by restricting from
the maximal unramified extension, we may now
replace $(F, \T) \leftrightarrow_l (F', \T')$ by
$(\tilde F, \T_{\tilde F}) \leftrightarrow_l (\tilde F', \T'_{\tilde F'})$,
and assume that $F$ is strictly Henselian.

Now we are in the setting of
Proposition \ref{pro: standard isomorphism strictly Henselian}.
Let $L \hookrightarrow F^{\sep}$ and $L' \hookrightarrow {F'}^{\sep}$
be as in the proof of that proposition, and set $e = e(L/F)$.
We have a diagram
\[
\xymatrix{
& \T(L)/\T(L)^{\naive}_{e r} \ar[dl]_{N_{L/F}}
\ar@{.>}[dd] \ar[rr] & &
\T'(L')/\T'(L')^{\naive}_{e r} \ar[dd]
\ar[dl]_{N_{L'/F'}}
\\
\T(F)/\T(F)^{\naive}_r \ar[rr] \ar[dd] & & \T'(F')/\T'(F')^{\naive}_r \ar[dd] & \\
& X_*(\T_L) \ar@{.>}[rr] \ar@{.>}[ld] & & X_*(\T_{L'}') \ar[ld] \\
X_*(\T)_{I_F} \ar[rr] & & X_*(\T')_{I_{F'}}
},
\]
whose `top face' is given by the proof of
Proposition \ref{pro: standard isomorphism strictly Henselian},
vertical arrows are the appropriate Kottwitz homomorphisms,
and the `bottom' face consists of obvious maps.

The proof of Proposition \ref{pro: standard isomorphism strictly Henselian}
also gives the commutativity of the `top face'. The two `side faces' are
commutative by \cite[Lemma 11.1.4]{KP23}.
The `hind face' (the four terms involving $L$) is commutative
since the split case of the lemma is known. The `bottom face' is clearly
commutative.

Since $N_{L/F}$ is surjective, as we saw in
the proof of Proposition \ref{pro: standard isomorphism strictly Henselian},
it is now easy to see that the `front face' is commutative
as well, which is exactly the commutative diagram
the lemma seeks to prove.
\end{proof}

\begin{pro} \label{pro: standard isomorphism LLC}
Standard isomorphisms have the following compatibility with the $\mathrm{LLC}$.
Let $(F, \T) \leftrightarrow_l (F', \T')$, and assume that
a standard isomorphism $\T(F)/\T(F)^{\naive}_r \rightarrow \T'(F')/\T'(F')^{\naive}_r$ exists
(in particular, $r \lleq_{\T} l$). Assume further that
$F$ and $F'$ are complete with finite residue field. Then we have the following
commutative diagram analogous to \eqref{eqn: LLC compatibility},
whose left vertical arrow is induced by the given isomorphism,
and the right vertical arrow is induced by the isomorphism
$\hat \T =
X^*(\T) \otimes \CC^{\times} = X^*(\T') \otimes \CC^{\times} = \hat \T'$
of modules over $W_F/I_F^l = W_{F'}/I_{F'}^l$:
\begin{equation} \label{eqn: standard isomorphism LLC}
\xymatrix{
\Hom(\T(F)/\T(F)^{\naive}_r, \CC^{\times})
\ar@{^{(}->}[r]^{\ \ \ \ \ \ \mathrm{LLC}} \ar[d]
& H^1(W_F/I_F^l, \hat \T) \ar[d]_{\cong}^{\Del_l} \\
\Hom(\T'(F')/\T'(F')^{\naive}_r, \CC^{\times})
 \ar@{^{(}->}[r]^{\ \ \ \ \ \ \ \ \mathrm{LLC}} & H^1(W_{F'}/I_{F'}^l, \hat \T')
}.
\end{equation}
\end{pro}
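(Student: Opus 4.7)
The plan is to reduce the assertion to the case of an induced torus $\R := \Res_{L/F}\T_L$, and then further to $\Res_{L/F}\GG_m$, where the compatibility becomes that of Deligne's isomorphism with local class field theory, as proved in \cite[\S 3.5]{Del84}.

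First I would choose compatible embeddings $L \hookrightarrow F^{\sep}$ and $L' \hookrightarrow {F'}^{\sep}$ where $L/F$ is a finite Galois extension splitting $\T$ with $r \lleq_L l$, and set $\R = \Res_{L/F}\T_L$ and $\R' = \Res_{L'/F'}\T'_{L'}$, so that $(F,\R) \leftrightarrow_l (F',\R')$ by Notation \ref{notn: close local fields tori}\eqref{close local fields tori restriction}. Lemma \ref{lm: tori automorphisms} says the diagonal inclusions $\T \hookrightarrow \R$ and $\T' \hookrightarrow \R'$ induce matching maps on character lattices, and Remark \ref{rmk: naive filtration} identifies $\T(F)^{\naive}_r$ with $\T(F) \cap \R(F)^{\naive}_r$. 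Hence we obtain an injection $\T(F)/\T(F)^{\naive}_r \hookrightarrow \R(F)/\R(F)^{\naive}_r$ fitting into the commuting square supplied by Lemma \ref{lm: standard isomorphism functorial}. Since $\CC^\times$ is divisible, every character of $\T(F)/\T(F)^{\naive}_r$ extends to one of $\R(F)/\R(F)^{\naive}_r$; by functoriality of the LLC for tori, applied to the surjection $\hat\R \twoheadrightarrow \hat\T$ dual to $X_*(\T) \hookrightarrow X_*(\R)$, the proposition for $\T$ follows from its analogue for $\R$.

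Second, $\R$ decomposes canonically as a product of copies of $\Res_{L/F}\GG_m$ indexed by a $\ZZ$-basis of $X^*(\T_L)$, compatibly with the analogous decomposition of $\R'$ under $X^*(\R)=X^*(\R')$, reducing the claim further to $\R = \Res_{L/F}\GG_m$. In this case $\R(F) = L^\times$, and since every $\sigma \in \Gamma_{L/F}$ preserves $\val_L$, one computes $\R(F)^{\naive}_r = 1 + \p_L^{\lceil e(L/F) r\rceil}$. Shapiro's lemma identifies $H^1(W_F,\hat\R) = \Hom_{\mathrm{cts}}(W_L^{\mathrm{ab}},\CC^\times)$, under which the LLC for $\R$ becomes the local reciprocity map for $L$. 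A character of $L^\times/(1+\p_L^{\lceil er\rceil})$ then corresponds to a character of $W_L^{\mathrm{ab}}$ trivial on $I_L^{\lceil er\rceil}$; since $\lceil er\rceil \leq \psi_{L/F}(l)$ and $I_L^{\psi_{L/F}(l)} = I_F^l$ by Remark \ref{rmk: at most l ramified extensions}\eqref{IFl IEl(1)}, such a character is trivial on $I_F^l$ and hence inflated from $H^1(W_F/I_F^l,\hat\R)$, giving the implicit assertion.

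Finally, one must verify that the Deligne isomorphism $W_F/I_F^l \cong W_{F'}/I_{F'}^l$ restricts under $W_L \subset W_F$, $W_{L'} \subset W_{F'}$ to the Deligne isomorphism $W_L/I_L^{\psi_{L/F}(l)} \cong W_{L'}/I_{L'}^{\psi_{L/F}(l)}$ attached to $L \leftrightarrow_{\psi_{L/F}(l)} L'$ (this is built into the construction; cf.\ Subsubsection \ref{subsubsec: TrE}), and that the latter intertwines the local reciprocity maps of $L$ and $L'$ with the isomorphism $L^\times/(1 + \p_L^{\psi_{L/F}(l)}) \cong {L'}^\times/(1 + \p_{L'}^{\psi_{L/F}(l)})$ of Subsubsection \ref{subsubsec: close local fields multiplicative groups}. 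This last compatibility is precisely the content of \cite[\S 3.5]{Del84}, extended to the Henselian setting via Subsubsection \ref{subsubsec: completion}. The main obstacle is exactly this citation: it requires matching normalizations and carefully reconciling the Shapiro identifications with the Deligne isomorphisms on $W_F$ and $W_L$; the reductions above are then formal consequences of divisibility of $\CC^\times$, functoriality of the LLC for tori, and Lemmas \ref{lm: standard isomorphism functorial}--\ref{lm: standard isomorphism functorial 2}.
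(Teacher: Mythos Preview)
Your proof is correct and follows essentially the same strategy as the paper's: extend the given character to a larger object (you use $\R(F)$ with $\R = \Res_{L/F}\T_L$; the paper uses $\T(L)$, and these coincide as groups), reduce to the $\GG_m$/split case, and come back via a cohomological transfer. Your transfer is functoriality of the LLC under $\hat\R \twoheadrightarrow \hat\T$ combined with Shapiro's lemma for $\Res_{L/F}\GG_m$; the paper's is corestriction $H^1(W_L,\hat\T) \to H^1(W_F,\hat\T)$, justified via \cite[Section 7.7]{Yu09} --- under the Shapiro identification $H^1(W_F,\hat\R) \cong H^1(W_L,\hat\T)$ these are literally the same map, so the difference is one of packaging. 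The paper's route is marginally shorter since $\T_L$ is already split and no further product decomposition is needed; yours has the aesthetic advantage of never leaving the base field $F$. One minor correction: the Deligne compatibility you invoke is \cite[Proposition 3.6.1]{Del84}, not \S 3.5.
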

\begin{proof}
First suppose $\T = \GG_m/F$, and $\T' = \GG_m/F'$ compatibly. In this
case, the lemma is easy to see: briefly, the local class field theory
map $W_F \rightarrow F^{\times}$ sends $I_F^l$ to $1 + \p_F^l$, making
the horizontal arrows well-defined (as $r \leq l$), and the
commutativity of the square follows from the analogous statement
for local class field theory, proved by Deligne
(see \cite[Proposition 3.6.1]{Del84}). From this, the case where
$\T$ is split follows, so we will consider the split case as known.

Let $L/F$ be a finite (say minimal, and hence) Galois extension splitting
$\T$, with $r \lleq_L l$. Let $\chi \colon \T(F)/\T(F)^{\naive}_r \rightarrow
\CC^{\times}$ have image $\chi' \colon \T'(F')/\T'(F')^{\naive}_r \rightarrow \CC^{\times}$
under the left vertical arrow.

Write $e = e(L/F)$. Since $\T(F)/\T(F)^{\naive}_r \hookrightarrow \T(L)/\T(L)^{\naive}_{e r}$,
we can extend $\chi$ to a homomorphism
$\chi_1 \colon \T(L)/\T(L)^{\naive}_{er} \rightarrow \CC^{\times}$. It is clear that
the homomorphism $\chi_1' : \T'(L')/\T'(L')^{\naive}_{er} \rightarrow \CC^{\times}$
obtained by transferring $\chi_1$ under the standard isomorphism
$\T(L)/\T(L)^{\naive}_{er} \rightarrow \T'(L')/\T'(L')^{\naive}_{er}$ (which exists
since $L$ splits $\T$ and $r \lleq_L l$) has $\chi'$ as its restriction to
$\T'(F')/\T'(F')^{\naive}_r \subset \T'(L')/\T'(L')^{\naive}_{er}$
(see Remark \ref{rmk: unique standard isomorphism}).

Let $\varphi_{\chi} \in H^1(W_F, \hat \T), \varphi_{\chi'} \in
H^1(W_{F'}, \hat \T'), \varphi_{\chi_1} \in H^1(W_L, \hat \T_L)$,
$\varphi_{\chi_1'} \in H^1(W_{L'}, \hat \T'_{L'})$ be the local
Langlands parameters of $\chi, \chi', \chi_1, \chi_1'$.
Write $l_1 = \psi_{L/F}(l) = \psi_{L'/F'}(l)$.

Since $\T_L$ is split, and since $e r \leq \psi_{L/F}(l) = l_1$,
it follows from the split case (discussed at the beginning
of this proof) that
$\varphi_{\chi_1} \in H^1(W_L/I_L^{l_1}, \hat \T) \subset H^1(W_L, \hat \T)$
(the inflation map), that $\varphi_{\chi_1'} \in H^1(W_{L'}/I_{L'}^{l_1})
\subset H^1(W_{L'}, \hat \T')$, and that $\varphi_{\chi_1'}$ is
the image of $\varphi_{\chi_1}$ under the obvious isomorphism
$H^1(W_L/I_L^{l_1}, \hat \T) \rightarrow H^1(W_{L'}/I_{L'}^{l_1}, \hat \T')$.

We will show that $\varphi_{\chi}$ is the image of
$\varphi_{\chi_1}$ under the corestriction
map $H^1(W_L, \hat \T) \rightarrow H^1(W_F, \hat \T)$: since $L/F$
is Galois, this follows from the construction of the local Langlands
correspondence for tori in \cite[Section 7.7]{Yu09} (see especially
the definition of $\varphi_T$ in (c) there). This corestriction
map is a composite $H^1(W_L, \hat \T) \rightarrow
H^1(W_F, \Ind_{W_L}^{W_F} \hat \T) \rightarrow H^1(W_F, \hat \T)$, where
the first map is the isomorphism given by Shapiro's lemma, and the
second is induced by an appropriate surjection
$\Ind_{W_L}^{W_F} \hat \T \rightarrow \hat \T$, involving
a certain sum over representatives for $W_F/W_L$:
see \cite[towards the end of Section 2.5]{Ser02}
(though this reference treats profinite groups, the same applies
in our context; $W_L \subset W_F$ is of finite index). Restricted to
$H^1(W_L/I_L^{l_1}, \hat \T) \subset H^1(W_L, \hat \T)$,
this map is a similarly defined composite
$H^1(W_L/I_L^{l_1}, \hat \T) \rightarrow
H^1(W_F/I_F^l, \Ind_{W_L/I_L^{l_1}}^{W_F/I_F^l} \hat \T)
\rightarrow H^1(W_F/I_F^l, \hat \T)$, where this time one
uses, as one clearly may, the Shapiro's lemma isomorphism
$H^1(W_L/I_L^{l_1}, \hat \T) \rightarrow H^1(W_F/I_F^l, \Ind_{W_L/I_L^{l_1}}^{W_F/I_F^l} \hat \T)$, and a sum over representatives for
$(W_F/I_F^l)/(W_L/I_L^{l_1})$.
Similarly, $\varphi_{\chi'}$ is the image of $\varphi_{\chi_1'}$
under a composite $H^1(W_{L'}/I_{L'}^{l_1}, \hat \T') \rightarrow
H^1(W_{F'}/I_{F'}^l, \Ind_{W_{L'}/I_{L'}^{l_1}}^{W_{F'}/I_{F'}^l} \hat \T')
\rightarrow H^1(W_{F'}/I_{F'}^l, \hat \T')$.

Now, using the identification
$W_F/I_F^l = W_{F'}/I_{F'}^l \supset W_{L'}/I_{L'}^{l_1}
= W_L/I_L^{l_1}$, the identification $\hat \T = \hat \T'$ as modules over
$W_F/I_F^l = W_{F'}/I_{F'}^l$, and also using the observation above relating
$\varphi_{\chi_1}$ and $\varphi_{\chi_1'}$,
we conclude that $\varphi_{\chi}$ is indeed the image of $\varphi_{\chi'}$
under the isomorphism
$H^1(W_F/I_F^l, \hat \T) \rightarrow H^1(W_{F'}/I_{F'}^l, \hat \T')$,
finishing the proof of the proposition.
\end{proof}
\subsection{Properties of congruent and Chai-Yu isomorphisms}

\begin{lm} \label{lm: congruent isomorphism}
Let $(F, \T) \leftrightarrow_l (F', \T')$, and let $m$ be a positive integer.
\begin{enumerate}[(i)]
\item \label{congruent isomorphism well-defined} If an isomorphism
$\T(F)/\T(F)_m \rightarrow \T'(F')/\T'(F')_m$ satisfies the conditions
of Definition \ref{df: standard congruent Chai-Yu}\eqref{congruent isomorphism}
with respect to some choice of $\tilde F \hookrightarrow F^{\sep},
\tilde F' \hookrightarrow {F'}^{\sep}$ and $r > 0$, then so does it
with respect to any other such choice (as in the definition).
Thus, there is either a unique congruent isomorphism
$\T(F)/\T(F)_m \rightarrow \T'(F')/\T'(F')_m$,
or none at all. A similar assertion applies to Chai-Yu isomorphisms
(Definition \ref{df: standard congruent Chai-Yu}\eqref{Chai-Yu isomorphism}).

\item \label{congruent isomorphism functorial} 
Congruent isomorphisms have the following functoriality.
Let $(F, \T_i) \leftrightarrow_l (F', \T_i')$ for $i = 1, 2$, with
the same underlying $F \leftrightarrow_l F'$, and let
$m$ be a positive integer.
Let $f : \T_1 \rightarrow \T_2$ and $f' : \T_1' \rightarrow \T_2'$ be
homomorphisms inducing the same homomorphism
$X^*(\T_2') = X^*(\T_2) \rightarrow X^*(\T_1) = X^*(\T_1')$ at the level
of character lattices.
Assume that congruent isomorphisms 
$c_i : \T_i(F)/\T_i(F)_m \rightarrow \T_i'(F')/\T_i'(F')_m$
exist for $i = 1, 2$, and form the following
diagram:
\[
\xymatrix{
\T_1(F)/\T_1(F)_m \ar[r]^{f} \ar[d]_{c_1} & \T_2(F)/\T_2(F)_m \ar[d]^{c_2} \\
\T_1'(F')/\T_1'(F')_m \ar[r]^{f'} & \T_2'(F')/\T_2'(F')_m
}.
\]
Then this diagram is commutative under the following
additional assumption: ``the same $r$ can be used to define both
the congruent isomorphisms'', i.e., there exists
$r > 0$ such that for $i = 1, 2$, we have $r \lleq_{\T_i} l$,
$\T_i(\tilde F)^{\naive}_r \subset \T_i(\tilde F)_m$
and $\T_i'(\tilde F')^{\naive}_r \subset \T_i'(\tilde F')_m$,
for some choice of compatible embeddings $\tilde F \hookrightarrow F^{\sep}$ and
$\tilde F' \hookrightarrow {F'}^{\sep}$, where $\tilde F/F$
is a maximal unramified extension.

\item \label{congruent isomorphism sufficient condition}
Recall the following necessary condition for 
the existence of a
congruent isomorphism 
\[\T(F)/\T(F)_m \rightarrow \T'(F')/\T'(F')_m:\] for some
compatible embeddings $\tilde F \hookrightarrow F^{\sep}$ and
$\tilde F' \hookrightarrow {F'}^{\sep}$, where $\tilde F/F$
is a maximal unramified extension,
and some $r \lleq_{\T} l$ such that $\T(\tilde F)^{\naive}_r \subset
\T(\tilde F)_m$ and $\T'(\tilde F')^{\naive}_r \subset \T'(\tilde F')_m$,
a standard isomorphism $\T(\tilde F)/\T(\tilde F)^{\naive}_r \rightarrow
\T'(\tilde F')/\T'(\tilde F')^{\naive}_r$ induces an isomorphism
$\T(\tilde F)/\T(\tilde F)_m \rightarrow \T'(\tilde F')/\T'(\tilde F')_m$.
This condition is also sufficient.

\item \label{congruent isomorphism s}
Suppose there exists a congruent isomorphism
$\T(F)/\T(F)_m \rightarrow \T'(F')/\T'(F')_m$. Then for all
$0 < s \leq m$, it induces a standard isomorphism
$\T(F)/\T(F)^{\naive}_s \rightarrow \T'(F')/\T'(F')^{\naive}_s$.
\end{enumerate}
\end{lm}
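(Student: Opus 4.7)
The plan is to reduce each of the four assertions to properties of standard isomorphisms established earlier in the section, the one genuinely non-formal ingredient being the vanishing of Galois cohomology for $\T(\tilde F)_m$. For \eqref{congruent isomorphism well-defined}, independence of $r$ follows from uniqueness of standard isomorphisms (Remark~\ref{rmk: unique standard isomorphism}) combined with Lemma~\ref{lm: standard isomorphism decreasing r}: if $r_1 \leq r_2$ both satisfy $\T(\tilde F)^{\naive}_{r_i} \subset \T(\tilde F)_m$, the standard isomorphism at level $r_1$ is induced from the one at $r_2$ via the quotient $\T(\tilde F)/\T(\tilde F)^{\naive}_{r_2} \twoheadrightarrow \T(\tilde F)/\T(\tilde F)^{\naive}_{r_1}$, so both yield the same map on $\T(\tilde F)/\T(\tilde F)_m$ and hence the same restriction to $\T(F)/\T(F)_m$. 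Independence of the compatible embeddings $\tilde F \hookrightarrow F^{\sep}$, $\tilde F' \hookrightarrow {F'}^{\sep}$ is formal, since any two choices produce equivalent realizations $(\tilde F, \T_{\tilde F}) \leftrightarrow_l (\tilde F', \T'_{\tilde F'})$ lying over $(F, \T) \leftrightarrow_l (F', \T')$, and the notion of standard correspondence depends only on this realization. The Chai-Yu case is identical: the characterizing datum is just the induced map on $\mO_{\tilde F}/\p_{\tilde F}^m$-points.

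For \eqref{congruent isomorphism functorial}, the hypothesized common $r$ lets us invoke Lemma~\ref{lm: standard isomorphism functorial} directly at level $r$ over $\tilde F$, yielding commutativity of the square of standard isomorphisms $\T_i(\tilde F)/\T_i(\tilde F)^{\naive}_r \rightarrow \T'_i(\tilde F')/\T'_i(\tilde F')^{\naive}_r$, $i = 1, 2$. Via the containments $\T_i(\tilde F)^{\naive}_r \subset \T_i(\tilde F)_m$ and analogously for $\T'_i$, this descends to commutativity of the corresponding diagram on the $\T_i(\tilde F)/\T_i(\tilde F)_m$-quotients; restricting to $F$-rational points using $\T_i(\tilde F)_m \cap \T_i(F) = \T_i(F)_m$ gives the required square over $F$.

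For \eqref{congruent isomorphism sufficient condition}, necessity is immediate from the definition of a congruent isomorphism. For sufficiency, Corollary~\ref{cor: standard isomorphism functorial 2} (applied to each finite unramified Galois subextension of $\tilde F/F$ and passed to the ind-limit) shows that the hypothesized standard isomorphism at level $r$ over $\tilde F$, and hence its descent $\T(\tilde F)/\T(\tilde F)_m \rightarrow \T'(\tilde F')/\T'(\tilde F')_m$, is $\Gal(\tilde F/F)$-equivariant. Taking $\Gal(\tilde F/F)$-invariants, the identification $\T(\tilde F)^{\Gal(\tilde F/F)} = \T(F)$ together with the vanishing $H^1(\Gal(\tilde F/F), \T(\tilde F)_m) = 0$, which holds because $\T(\tilde F)_m$ is pro-unipotent over the perfect residue field $\kappa_{\tilde F}$ (cf.\ \cite[Proposition~13.8.1]{KP23} and the introduction), yields the desired restricted isomorphism $\T(F)/\T(F)_m \rightarrow \T'(F')/\T'(F')_m$. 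This cohomology vanishing is the main non-formal step.

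For \eqref{congruent isomorphism s}, by \eqref{congruent isomorphism well-defined} the given congruent isomorphism is induced by a standard isomorphism $\phi$ at some level $r \geq m$ over $\tilde F$ (the choice of $r$ being flexible). For $0 < s \leq m \leq r$, Lemma~\ref{lm: standard isomorphism decreasing r} produces a standard isomorphism $\phi_s$ at level $s$ over $\tilde F$. Given $t \in \T(F)$, any $t' \in \T'(F')$ representing the image of $t\,\T(F)_m$ under the congruent isomorphism is a standard correspondent of $t$ at level $r$ by construction, and hence at level $s$. Therefore $\phi_s$ restricts to a well-defined map $\T(F)/\T(F)^{\naive}_s \rightarrow \T'(F')/\T'(F')^{\naive}_s$ (well-definedness using Remark~\ref{rmk: naive filtration}), which is manifestly a standard isomorphism over $F$ and is compatible with the congruent isomorphism via the natural quotient maps $\T(F)/\T(F)_m \twoheadrightarrow \T(F)/\T(F)^{\naive}_s$.
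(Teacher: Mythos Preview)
Your argument follows the paper's approach closely and parts \eqref{congruent isomorphism well-defined}--\eqref{congruent isomorphism sufficient condition} are essentially correct, but there is a genuine error in part \eqref{congruent isomorphism s}. You assert that ``any $t' \in \T'(F')$ representing the image of $t\,\T(F)_m$ under the congruent isomorphism is a standard correspondent of $t$ at level $r$ by construction''. This is false. What the construction gives is only that the images of $t$ and $t'$ agree in $\T'(\tilde F')/\T'(\tilde F')_m$: concretely, $\phi(t\,\T(\tilde F)^{\naive}_r) = t' u'\,\T'(\tilde F')^{\naive}_r$ for some $u' \in \T'(\tilde F')_m$. Since in general $\T'(\tilde F')^{\naive}_r \subsetneq \T'(\tilde F')_m$ (the containment goes the wrong way for your claim), $t'$ itself need not be a standard correspondent of $t$ at level $r$. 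The repair, which is exactly what the paper does, is to note that $u' \in \T'(\tilde F')_m \subset \T'(\tilde F')^{\naive}_m \subset \T'(\tilde F')^{\naive}_s$ (using $s \leq m$), so $t'$ and $t' u'$ coincide modulo $\T'(\tilde F')^{\naive}_s$; since $t$ and $t' u'$ \emph{are} standard correspondents at level $r$ and hence at level $s$ by Lemma~\ref{lm: standard isomorphism decreasing r}, it follows that $t$ and $t'$ are standard correspondents at level $s$.

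Two smaller omissions: in \eqref{congruent isomorphism well-defined}, the Chai-Yu case is not ``identical'', since a Chai-Yu isomorphism is a morphism of schemes, not just a map on $\mO_{\tilde F}/\p_{\tilde F}^m$-points; one needs the schematic density of the image of $\mcT^{\ft}(\mO_{\tilde F})$ in $\mcT^{\ft} \times_{\mO_F} \mO_F/\p_F^m$ (\cite[Lemma~8.5.1]{CY01}) to pass from points to schemes. In \eqref{congruent isomorphism functorial}, you invoke Lemma~\ref{lm: standard isomorphism functorial} at level $r$ over $\tilde F$, but that lemma requires the standard isomorphisms to exist; you should note that they do, by Proposition~\ref{pro: standard isomorphism strictly Henselian} (since $r \lleq_{\T_i} l$ implies $r \lleq_{(\T_i)_{\tilde F}} l$ and $\tilde F$ is strictly Henselian).
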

\begin{proof}
To see \eqref{congruent isomorphism well-defined},
combine Lemma \ref{lm: standard isomorphism functorial 2}
(to see the non-dependence on
$\tilde F \hookrightarrow F^{\sep}$ and $\tilde F' \hookrightarrow {F'}^{\sep}$)
with Lemma \ref{lm: standard isomorphism decreasing r}
(to see the non-dependence on $r$). For the assertion concerning
Chai-Yu isomorphisms, one also uses
\cite[Lemma 8.5.1]{CY01}.

Now we come to \eqref{congruent isomorphism functorial}. For $i = 1, 2$,
the relation $r \lleq_{\T_i} l$ easily implies $r \lleq_{(\T_i)_{\tilde F}} l$,
and hence Proposition \ref{pro: standard isomorphism strictly Henselian}
gives a standard isomorphism
$\T_i(\tilde F)/\T_i(\tilde F)^{\naive}_r \rightarrow
\T_i'(\tilde F')/\T_i'(\tilde F')^{\naive}_r$. Thus,
\eqref{congruent isomorphism functorial} follows from	
Lemma \ref{lm: standard isomorphism functorial} and the
fact that we can work with the given $r$
(by \eqref{congruent isomorphism well-defined}).

Now we come to \eqref{congruent isomorphism sufficient condition}.
It follows from Corollary \ref{cor: standard isomorphism functorial 2} that
the standard isomorphism 
\[\T(\tilde F)/\T(\tilde F)^{\naive}_r \rightarrow
\T'(\tilde F')/\T'(\tilde F')^{\naive}_r,\] and hence also the isomorphism
$\T(\tilde F)/\T(\tilde F)_m \rightarrow \T'(\tilde F')/\T'(\tilde F')_m$,
is invariant under $\Gamma_{\tilde F/F} = \Gamma_{\tilde F'/F'}$.
Thus, it suffices to show that
$(\T(\tilde F)/\T(\tilde F)_m)^{\Gamma_{\tilde F/F}}
= \T(F)/\T(F)_m$ (then the analogous assertion for $F'$ will be true as well).
This in turn follows if we show that
$H^1(\Gamma_{\tilde F/F}, \T(\tilde F)_m) = 0$,
which is a special case of \cite[Proposition 13.8.1]{KP23}.
This gives \eqref{congruent isomorphism sufficient condition}.

It remains to prove \eqref{congruent isomorphism s}.
Choose compatible embeddings $\tilde F \hookrightarrow F^{\sep}$ and
$\tilde F' \hookrightarrow {F'}^{\sep}$, where $\tilde F/F$
is a maximal unramified extension.
Choose $r$ such that a standard isomorphism
$\T(\tilde F)/\T(\tilde F)^{\naive}_r \rightarrow
\T'(\tilde F')/\T'(\tilde F')^{\naive}_r$ induces an isomorphism
$\T(\tilde F)/\T(\tilde F)_m \rightarrow \T'(\tilde F')/\T'(\tilde F')_m$
that restricts to $\T(F)/\T(F)_m \rightarrow \T'(F')/\T'(F')_m$. Recall
that $r \geq m$ (since $\T(\tilde F)^{\naive}_r \subset \T(\tilde F)_m
\subset \T(\tilde F)^{\naive}_m$).
It suffices to show that, whenever
$t \in \T(F)$ and $t' \in \T'(F')$ have images that match
under $\T(F)/\T(F)_m \rightarrow \T'(F')/\T'(F')_m$, $t \T(F)^{\naive}_s$
and $t' \T'(F')^{\naive}_s$ are standard correspondents
(for ``level $s$''). An easy argument reduces this to showing that $t$ and $t'$
have images that match under the standard isomorphism
$\T(\tilde F)/\T(\tilde F)^{\naive}_s \rightarrow
\T'(\tilde F')/\T'(\tilde F')^{\naive}_s$ (which
exists by Lemma \ref{lm: standard isomorphism decreasing r} and the
fact that $s \leq m \leq r$).
Now we are done by Lemma \ref{lm: standard isomorphism decreasing r},
since there exist $t_{\circ} \in \T(\tilde F)_m \subset
\T(\tilde F)^{\naive}_m \subset \T(\tilde F)^{\naive}_s$, and
similarly $t_{\circ}' \in \T'(\tilde F')^{\naive}_s$,
such that $t t_{\circ}$ and $t' t_{\circ}'$
have images that match under the standard isomorphism
$\T(\tilde F)/\T(\tilde F)^{\naive}_r
\rightarrow \T'(\tilde F')/\T'(\tilde F')^{\naive}_r$.
\end{proof}

\begin{lm} \label{lm: Chai-Yu isomorphism well-defined}
Let $(F, \T) \leftrightarrow_l (F', \T')$, and let $m$ be a positive integer.
Then there exists associated to this data at most one Chai-Yu isomorphism
$\mcT^{\ft} \times_{\mO_F} \mO_F/\p_F^m \rightarrow
{\mcT'}^{\ft} \times_{\mO_{F'}} \mO_{F'}/\p_{F'}^m$.
\end{lm}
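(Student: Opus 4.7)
The plan is to reduce the uniqueness to that of standard isomorphisms (Lemma \ref{lm: standard isomorphism well-defined}), via a schematic density argument identical in spirit to the one already used in the proof of Proposition \ref{pro: CY01 Proposition 4.2}, and indeed already alluded to in the proof of Lemma \ref{lm: congruent isomorphism}\eqref{congruent isomorphism well-defined}.

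Suppose $f_1, f_2 \colon \mcT^{\ft} \times_{\mO_F} \mO_F/\p_F^m \rightarrow {\mcT'}^{\ft} \times_{\mO_{F'}} \mO_{F'}/\p_{F'}^m$ are two Chai-Yu isomorphisms. Fix compatible embeddings $\tilde F \hookrightarrow F^{\sep}$ and $\tilde F' \hookrightarrow {F'}^{\sep}$, with $\tilde F/F$ a maximal unramified extension. By Definition \ref{df: standard congruent Chai-Yu}\eqref{Chai-Yu isomorphism}, for each $i \in \{1,2\}$ there exists $r_i > 0$ with $r_i \lleq_{\T} l$, $\T(\tilde F)^{\naive}_{r_i} \subset \T(\tilde F)_m$, and $\T'(\tilde F')^{\naive}_{r_i} \subset \T'(\tilde F')_m$, such that evaluating $f_i$ at $\mO_{\tilde F}/\p_{\tilde F}^m = \mO_{\tilde F'}/\p_{\tilde F'}^m$ produces the map on $\T(\tilde F)_b/\T(\tilde F)_m$ induced by a restricted standard isomorphism $\T(\tilde F)_b/\T(\tilde F)^{\naive}_{r_i} \rightarrow \T'(\tilde F')_b/\T'(\tilde F')^{\naive}_{r_i}$ attached to $(\tilde F, \T_{\tilde F}) \leftrightarrow_l (\tilde F', \T'_{\tilde F'})$.

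Setting $r := \min(r_1, r_2)$, Lemma \ref{lm: standard isomorphism decreasing r} shows that both evaluations at $\mO_{\tilde F}/\p_{\tilde F}^m$ are induced by restricted standard isomorphisms at the common level $r$. Since standard isomorphisms are unique when they exist (Lemma \ref{lm: standard isomorphism well-defined}), these coincide; hence $f_1$ and $f_2$ induce the very same map $\mcT^{\ft}(\mO_{\tilde F}/\p_{\tilde F}^m) \rightarrow {\mcT'}^{\ft}(\mO_{\tilde F'}/\p_{\tilde F'}^m)$.

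It remains to promote this equality of maps on $\mO_{\tilde F}/\p_{\tilde F}^m$-points to an equality of morphisms of $\mO_F/\p_F^m$-schemes. For this I would invoke \cite[Lemma~8.5.1]{CY01}: since $\mcT^{\ft}$ is smooth over $\mO_F$, the set $\mcT^{\ft}(\mO_{\tilde F})$ is schematically dense in $\mcT^{\ft}$, and its image in $\mcT^{\ft} \times_{\mO_F} \mO_F/\p_F^m$ is correspondingly schematically dense there (this is the same descent step that is needed at the end of the proof of Proposition \ref{pro: CY01 Proposition 4.2}, and in particular relies only on smoothness of $\mcT^{\ft}$, which it has by construction). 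Since ${\mcT'}^{\ft} \times_{\mO_{F'}} \mO_{F'}/\p_{F'}^m$ is separated, the equalizer of $f_1$ and $f_2$ is a closed subscheme of $\mcT^{\ft} \times_{\mO_F} \mO_F/\p_F^m$ containing this dense set, so it equals the whole scheme. I expect the only non-routine step to be the final density invocation, but it is essentially a quotation of a mechanism that has already been used repeatedly above.
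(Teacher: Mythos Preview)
Your proof is correct and follows essentially the same approach as the paper's own proof, which is a one-line reference: ``Combine the argument for congruent isomorphisms in Lemma \ref{lm: congruent isomorphism}\eqref{congruent isomorphism well-defined} with the schematic density of the image of $\T(\tilde F)_b = \mathcal{T}^{\ft}(\mO_{\tilde F})$ in $\mcT^{\ft} \times_{\mO_F} \mO_F/\p_F^m$ (\cite[Lemma 8.5.1]{CY01}).'' You have simply unpacked this reference in detail, using Lemma \ref{lm: standard isomorphism decreasing r} to align the two levels $r_1, r_2$ and the uniqueness of standard correspondents from Lemma \ref{lm: standard isomorphism well-defined} to match the evaluations, followed by the schematic density step.
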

\begin{proof}
Combine the argument for congruent isomorphisms in
Lemma \ref{lm: congruent isomorphism}\eqref{congruent isomorphism well-defined}
with the schematic density of the image of $\T(\tilde F)_b
= \mathcal{T}^{\ft}(\mO_{\tilde F})$
in $\mcT^{\ft} \times_{\mO_F} \mO_F/\p_F^m$ (\cite[Lemma 8.5.1]{CY01}).
\end{proof}

\begin{pro} \label{pro: Chai-Yu congruent}
Let $(F, \T) \leftrightarrow_l (F', \T')$, and let $m$ be a positive integer.
If there exists a Chai-Yu isomorphism
$\mcT^{\ft} \times_{\mO_F} \mO_F/\p_F^m \rightarrow
{\mcT'}^{\ft} \times_{\mO_{F'}} \mO_{F'}/\p_{F'}^m$, then
there exists a congruent isomorphism
$\T(F)/\T(F)_m \rightarrow \T'(F')/\T'(F')_m$. Moreover, this
congruent isomorphism restricts to an isomorphism
$\T(F)_b/\T(F)_m \rightarrow \T'(F')_b/\T'(F')_m$
obtained by evaluating the Chai-Yu isomorphism at
$\mO_F/\p_F^m = \mO_{F'}/\p_{F'}^m$.
\end{pro}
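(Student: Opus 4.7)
The plan is to turn the Chai-Yu isomorphism into a congruent isomorphism by first producing a full standard isomorphism over the strict Henselization, showing it respects the $\T(-)_m$-filtration via the Chai-Yu property, and then descending to $F$ using Lemma~\ref{lm: congruent isomorphism}\eqref{congruent isomorphism sufficient condition}. To set this up, fix compatible embeddings $\tilde F \hookrightarrow F^{\sep}$ and $\tilde F' \hookrightarrow {F'}^{\sep}$, with $\tilde F/F$ a maximal unramified extension. By the definition of a Chai-Yu isomorphism (Definition~\ref{df: standard congruent Chai-Yu}\eqref{Chai-Yu isomorphism}), there exists $r > 0$ with $r \lleq_{\T} l$, with $\T(\tilde F)^{\naive}_r \subset \T(\tilde F)_m$ and $\T'(\tilde F')^{\naive}_r \subset \T'(\tilde F')_m$, and with a ``restricted standard isomorphism'' $\rho\colon \T(\tilde F)_b/\T(\tilde F)^{\naive}_r \rightarrow \T'(\tilde F')_b/\T'(\tilde F')^{\naive}_r$ whose induced map $\T(\tilde F)_b/\T(\tilde F)_m \rightarrow \T'(\tilde F')_b/\T'(\tilde F')_m$ is the evaluation of the given Chai-Yu isomorphism at $\mO_{\tilde F}/\p_{\tilde F}^m = \mO_{\tilde F'}/\p_{\tilde F'}^m$.

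Since $\tilde F$ is strictly Henselian and $r \lleq_{\T_{\tilde F}} l$, Proposition~\ref{pro: standard isomorphism strictly Henselian} gives a (full) standard isomorphism $\sigma\colon \T(\tilde F)/\T(\tilde F)^{\naive}_r \rightarrow \T'(\tilde F')/\T'(\tilde F')^{\naive}_r$. By the uniqueness of standard correspondents (Lemma~\ref{lm: standard isomorphism well-defined}), $\sigma$ must restrict to $\rho$ on $\T(\tilde F)_b/\T(\tilde F)^{\naive}_r$; in particular $\sigma$ carries $\T(\tilde F)_b$ into $\T'(\tilde F')_b$ modulo $\T'(\tilde F')^{\naive}_r$. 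Then for $\sigma$ to descend to an isomorphism $\bar\sigma\colon \T(\tilde F)/\T(\tilde F)_m \rightarrow \T'(\tilde F')/\T'(\tilde F')_m$ it suffices that $\sigma$ send $\T(\tilde F)_m$ into $\T'(\tilde F')_m$ (and symmetrically); but $\T(\tilde F)_m \subset \T(\tilde F)_b$, on which $\sigma = \rho$, and by the defining property of the Chai-Yu isomorphism, $\rho$ sends $\T(\tilde F)_m/\T(\tilde F)^{\naive}_r$ into $\T'(\tilde F')_m/\T'(\tilde F')^{\naive}_r$.

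By Lemma~\ref{lm: congruent isomorphism}\eqref{congruent isomorphism sufficient condition}, the existence of $\bar\sigma$ as produced above is exactly the sufficient condition for a congruent isomorphism to exist; that lemma produces $\T(F)/\T(F)_m \rightarrow \T'(F')/\T'(F')_m$ by $\Gamma_{\tilde F/F}$-invariants (valid thanks to Corollary~\ref{cor: standard isomorphism functorial 2} and the vanishing of $H^1(\Gamma_{\tilde F/F},\T(\tilde F)_m)$). For the ``moreover'' assertion, evaluating the Chai-Yu isomorphism at $\mO_F/\p_F^m = \mO_{F'}/\p_{F'}^m$ gives $\mcT^{\ft}(\mO_F/\p_F^m) \to \mcT'^{\ft}(\mO_{F'}/\p_{F'}^m)$, which via smoothness and Hensel's lemma identifies with $\T(F)_b/\T(F)_m \to \T'(F')_b/\T'(F')_m$; this map agrees with the restriction of the Chai-Yu isomorphism at $\mO_{\tilde F}/\p_{\tilde F}^m$ to $\Gamma_{\tilde F/F}$-fixed points, which by the previous paragraph is the restriction of $\bar\sigma$ to $\T(\tilde F)_b/\T(\tilde F)_m$, i.e.\ the restriction of the congruent isomorphism to $\T(F)_b/\T(F)_m$.

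The argument is essentially bookkeeping; the only mildly delicate point is step two — ensuring that the full standard isomorphism $\sigma$ over $\tilde F$ preserves the congruent filtration — and this is forced by the Chai-Yu property together with the uniqueness of standard correspondents, which pins down $\sigma|_{\T(\tilde F)_b}$ to coincide with the restricted standard isomorphism $\rho$ coming from the Chai-Yu datum.
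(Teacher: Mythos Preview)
Your proof is correct and follows essentially the same approach as the paper's: unpack the Chai-Yu datum to get a restricted standard isomorphism over $\tilde F$, extend it to a full standard isomorphism via Proposition~\ref{pro: standard isomorphism strictly Henselian}, use the Chai-Yu property to see it respects $\T(\tilde F)_m$, and descend via $H^1$-vanishing. The only cosmetic difference is that where you invoke Lemma~\ref{lm: standard isomorphism well-defined} (uniqueness of standard correspondents) to identify $\sigma|_{\T(\tilde F)_b}$ with $\rho$, the paper cites Lemma~\ref{lm: standard isomorphism decreasing r}; both are valid routes to the same conclusion.
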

\begin{proof}
Choose compatible embeddings $\tilde F \hookrightarrow F^{\sep}$ and
$\tilde F' \hookrightarrow {F'}^{\sep}$, where $\tilde F/F$ is
a maximal unramified extension.
By the definition of a Chai-Yu isomorphism
(Definition \ref{df: standard congruent Chai-Yu}\eqref{Chai-Yu isomorphism}),
for some $0 < r \lleq_{\T} l$, there exists
a ``restricted standard isomorphism'' 
$\T(\tilde F)_b/\T(\tilde F)^{\naive}_r
\rightarrow \T'(\tilde F')_b/\T'(\tilde F')^{\naive}_r$
that induces the isomorphism 
$\T(\tilde F)_b/\T(\tilde F)_m \rightarrow \T'(\tilde F')_b/\T'(\tilde F')_m$
obtained by evaluating the given Chai-Yu isomorphism
at $\mO_{\tilde F}/\p_{\tilde F}^m = \mO_{\tilde F'}/\p_{\tilde F'}^m$.
There also exists a standard isomorphism
$\T(\tilde F)/\T(\tilde F)^{\naive}_r
\rightarrow \T'(\tilde F')/\T'(\tilde F')^{\naive}_r$
by Proposition \ref{pro: standard isomorphism strictly Henselian}, which
restricts to the restricted standard isomorphism
$\T(\tilde F)_b/\T(\tilde F)^{\naive}_r
\rightarrow \T'(\tilde F')_b/\T'(\tilde F')^{\naive}_r$,
by Lemma \ref{lm: standard isomorphism decreasing r}.

The restricted standard isomorphism induces an isomorphism 
$\T(\tilde F)_b/\T(\tilde F)_m \rightarrow \T'(\tilde F')_b/\T'(\tilde F')_m$
and hence takes the image of $\T(\tilde F)_m$ to that
of $\T'(\tilde F')_m$. Hence so does the standard isomorphism as well,
which therefore induces an isomorphism
$\T(\tilde F)/\T(\tilde F)_m \rightarrow \T'(\tilde F')/\T'(\tilde F')_m$.
As in the proof of Lemma \ref{lm: congruent isomorphism}\eqref{congruent isomorphism sufficient condition},
using that
$H^1(\Gamma_{\tilde F/F}, \T(\tilde F)_m)
= 0 = H^1(\Gamma_{\tilde F'/F'}, \T'(\tilde F')_m)$, this isomorphism
$\T(\tilde F)/\T(\tilde F)_m \rightarrow \T'(\tilde F')/\T'(\tilde F')_m$
restricts to an isomorphism
$\T(F)/\T(F)_m \rightarrow \T'(F')/\T'(F')_m$, which is clearly
a congruent isomorphism that satisfies the latter assertion
of the lemma.
\end{proof}

Now we study the behavior of Chai-Yu isomorphisms
with respect to minimal congruent filtrations.
\begin{pro} \label{pro: minimal congruent Chai-Yu}
Let $(F, \T) \leftrightarrow_l (F', \T')$, and let $m$ be a positive integer.
Fix compatible embeddings $\tilde F \hookrightarrow F^{\sep}$ and
$\tilde F' \hookrightarrow {F'}^{\sep}$, where $\tilde F/F$ is
a maximal unramified extension.
Assume that, associated to $(F, \T) \leftrightarrow_l (F', \T')$,
there exists a Chai-Yu isomorphism
$\mathcal{T}^{\ft} \times_{\mO_F} \mO_F/\p_F^{m + 1} \rightarrow
{\mathcal{T}'}^{\ft} \times_{\mO_{F'}} \mO_{F'}/\p_{F'}^{m + 1}$
(``one higher level''), say induced by some isomorphism
$\T(\tilde F)_b/\T(\tilde F)^{\naive}_r
\rightarrow \T'(\tilde F')_b/\T'(\tilde F')^{\naive}_r$
as in Definition \ref{df: standard congruent Chai-Yu}\eqref{Chai-Yu isomorphism}
(where $m + 1 \leq r \lleq_{\T} l$). Then for all $0 \leq s \leq m$:
\begin{enumerate}[(i)]
\item \label{minimal congruent Chai-Yu} The isomorphism
$\T(\tilde F)_b/\T(\tilde F)_m \rightarrow \T'(\tilde F')_b/\T'(\tilde F')_m$,
obtained by evaluating the given Chai-Yu isomorphism
at $\mO_{\tilde F}/\p_{\tilde F}^m = \mO_{\tilde F'}/\p_{\tilde F'}^m$,
sends the image of $\T(\tilde F)_s$ to that of $\T'(\tilde F')_s$.
\item \label{minimal congruent Chai-Yu group scheme} At the level
of schemes, letting $\mathcal{T}_s$ and $\mathcal{T}'_s$ be the
minimal congruent filtration group schemes associated to $\T$ and $\T'$ of
level $s$, one has a unique isomorphism $\mathcal{T}_s \times_{\mO_F}
\mO_F/\p_F^{\lfloor m + 1 - s \rfloor} \rightarrow
\mathcal{T}'_s \times_{\mO_{F'}}
\mO_{F'}/\p_{F'}^{\lfloor m + 1 - s \rfloor}$ of schemes over
$\mO_F/\p_F^{\lfloor m + 1 - s \rfloor} = \mO_{F'}/\p_{F'}^{\lfloor m + 1 - s \rfloor}$,
under which the images of $t \in \mcT_s(\mO_{\tilde F}) = \T(\tilde F)_s$ and 
$t' \in \mcT_s'(\mO_{\tilde F'}) = \T'(\tilde F')_s$
correspond whenever the Chai-Yu isomorphism being considered
sends the image of $t$ in
$\mcT^{\ft}(\mO_{\tilde F}/\p_{\tilde F}^{m + 1}) = \T(\tilde F)_b/\T(\tilde F)_{m + 1}$ to that of $t'$ in
${\mcT'}^{\ft}(\mO_{\tilde F'}/\p_{\tilde F'}^{m + 1})
= \T'(\tilde F')_b/\T'(\tilde F')_{m + 1}$.
\end{enumerate}
\end{pro}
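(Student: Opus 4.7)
The plan is to establish (i) by a pointwise analysis at $\mO_{\tilde F}$-points, then bootstrap to (ii) via iterated dilatations using Proposition~\ref{pro: CY01 Proposition 4.2}. For (i) at integer $s$ with $1 \leq s \leq m$, the Chai-Yu iso $c \colon \mathcal{T}^{\ft} \times_{\mO_F} \mO_F/\p_F^{m+1} \to {\mathcal{T}'}^{\ft} \times_{\mO_{F'}} \mO_{F'}/\p_{F'}^{m+1}$, being a morphism of schemes over $\mO_F/\p_F^{m+1} = \mO_{F'}/\p_{F'}^{m+1}$, yields via evaluation at $\mO_{\tilde F}/\p_{\tilde F}^s$ an iso $\mathcal{T}^{\ft}(\mO_{\tilde F}/\p_{\tilde F}^s) \to {\mathcal{T}'}^{\ft}(\mO_{\tilde F'}/\p_{\tilde F'}^s)$ compatible with the level-$m$ evaluation via reduction. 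Since any $t \in \mathcal{T}^{\ft}(\mO_{\tilde F})$ reducing to the identity mod $\p_{\tilde F}$ lies in $\mathcal{T}(\mO_{\tilde F})$ (as $\mathcal{T} \hookrightarrow \mathcal{T}^{\ft}$ is the open subscheme picking out the identity component on the special fiber), we have $\T(\tilde F)_s = \ker(\mathcal{T}^{\ft}(\mO_{\tilde F}) \to \mathcal{T}^{\ft}(\mO_{\tilde F}/\p_{\tilde F}^s))$ for integer $s \geq 1$, giving (i) in that range. For $s = 0$, the mod-$\p_F$ reduction of $c$ is an iso of $\kappa_F = \kappa_{F'}$-group schemes preserving identity components, and $\T(\tilde F)_0 = \mathcal{T}(\mO_{\tilde F})$ is the preimage of that component in $\T(\tilde F)_b$.

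For non-integer $0 < s < m$, pass to a splitting extension: choose an at most $l$-ramified Galois extension $L/F$ splitting $\T$ with $m+1 \lleq_L l$, and a compatible $L' \hookrightarrow {F'}^{\sep}$; set $\tilde L = L \cdot \tilde F$, $\tilde L' = L' \cdot \tilde F'$. The restricted standard iso $\sigma$ underlying $c$ extends to a standard iso over $(\tilde L, \T_{\tilde L}) \leftrightarrow (\tilde L', \T'_{\tilde L'})$ by combining Proposition~\ref{pro: standard isomorphism strictly Henselian} with Lemma~\ref{lm: standard isomorphism functorial 2}, and is $\Gamma_{\tilde L/\tilde F} = \Gamma_{\tilde L'/\tilde F'}$-equivariant by Corollary~\ref{cor: standard isomorphism functorial 2}. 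Over $\tilde L$, $\T_{\tilde L}$ is split so its minimal congruent filtration coincides with the naive filtration; Yu's definition then presents $\T(\tilde F)_s$ as the Galois invariants of a subgroup of $\T(\tilde L)_b$ cut out by naive-filtration conditions at level $\lceil e(L/F)\, s \rceil$. The standard iso respects these conditions by Lemma~\ref{lm: standard isomorphism decreasing r}; Galois descent via $H^1(\Gamma_{\tilde L/\tilde F}, \T(\tilde L)_m) = 0$ (\cite[Proposition 13.8.1]{KP23}) then yields (i) for non-integer $s$.

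For part (ii), construct the iso $\mathcal{T}_s \times_{\mO_F} \mO_F/\p_F^{\lfloor m+1-s \rfloor} \to \mathcal{T}'_s \times_{\mO_{F'}} \mO_{F'}/\p_{F'}^{\lfloor m+1-s \rfloor}$ by iterated dilatation. First restrict $c$ to an iso $\mathcal{T} \times_{\mO_F} \mO_F/\p_F^{m+1} \to \mathcal{T}' \times_{\mO_{F'}} \mO_{F'}/\p_{F'}^{m+1}$ (justified by the $s = 0$ case of (i)). Then realize $\mathcal{T}_s$ as the outcome of a finite sequence of dilatations of $\mathcal{T}$ along smooth subgroup schemes of successive special fibers, with break-points at rationals whose denominators divide $e(L/F)$. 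At each break, part (i) at the corresponding level ensures that the smooth subgroup being dilated on the $F$-side corresponds under the relevant iso to that on the $F'$-side, so Proposition~\ref{pro: CY01 Proposition 4.2} produces an iso at one lower scheme-theoretic level; the cumulative drop matches $\lfloor m+1-s \rfloor = m+1-\lceil s \rceil$ for $s > 0$, reflecting the integer thresholds crossed. Uniqueness follows from schematic density of $\T(\tilde F)_s = \mathcal{T}_s(\mO_{\tilde F})$ in $\mathcal{T}_s \times_{\mO_F} \mO_F/\p_F^{\lfloor m+1-s \rfloor}$ (\cite[Lemma 8.5.1]{CY01}) together with the pointwise characterization from (i).

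The main obstacle will be extracting the precise Galois-descent characterization of $\T(\tilde F)_s$ from Yu's definition of the minimal congruent filtration for non-integer $s$ (most delicately when $L/F$ is wildly ramified), and matching the break-point sequence of dilatations used to construct $\mathcal{T}_s$ with the level drops provided by Proposition~\ref{pro: CY01 Proposition 4.2}, all while respecting the constraint $m+1 \lleq_{\T} l$ at every intermediate step.
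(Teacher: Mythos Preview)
Your treatment of integer $s \geq 1$ and of $s = 0$ in part (i) is correct, and the integer case is in fact more direct than what the paper does (the paper bootstraps everything from the $0 < s < 1$ analysis rather than reading it off from the reduction maps of $\mathcal{T}^{\ft}$).

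The genuine gap is your handling of non-integer $s$ in (i). You assert that ``Yu's definition then presents $\T(\tilde F)_s$ as the Galois invariants of a subgroup of $\T(\tilde L)_b$ cut out by naive-filtration conditions at level $\lceil e(L/F)\, s \rceil$.'' But the intersection $\T(\tilde F) \cap \T(\tilde L)^{\naive}_{e(L/F)\, s}$ is exactly $\T(\tilde F)^{\naive}_s$ (Remark~\ref{rmk: naive filtration}), and the paper explicitly notes that the containment $\T(\tilde F)_s \subset \T(\tilde F)^{\naive}_s$ can be proper when $\T$ is not weakly induced (see the paragraph preceding Lemma~\ref{lm: MP intersection}). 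So descending the split/naive filtration from $\tilde L$ recovers the naive filtration over $\tilde F$, not the minimal congruent one; your proposed characterization is simply false in general. You flag this as an ``obstacle,'' but it is not a technicality to be ironed out --- it is the substantive point the proposition is addressing, and the approach via Galois descent from the splitting field cannot work.

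The paper instead handles $0 < s < 1$ by unwinding the actual definition of $\T(\tilde F)_s$ from \cite[Definition B.10.8(2)]{KP23}: it is generated, together with $\T(\tilde F)_1$, by the images of $\S(\tilde F)_s$ as $\S$ ranges over \emph{induced} tori over $\tilde F$ mapping to $\T_{\tilde F}$. One reduces to $\S$ split by $\tilde L$, transfers $\S$ to an induced torus $\S'/\tilde F'$ via the identification of character lattices, and uses the functoriality of standard isomorphisms (Lemma~\ref{lm: standard isomorphism functorial}) together with Lemma~\ref{lm: standard isomorphism decreasing r} to match the images of $\S(\tilde F)_s$ and $\S'(\tilde F')_s$ under the level-$(m+1)$ Chai--Yu isomorphism. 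Relatedly, your picture of $\mathcal{T}_s$ as built from dilatations at ``break-points at rationals with denominator dividing $e(L/F)$'' is closer to how the Moy--Prasad filtration schemes are constructed; the minimal congruent $\mathcal{T}_s$ requires only \emph{one} dilatation of $\mathcal{T}$ for the fractional part $s - \lfloor s \rfloor$, followed by $\lfloor s \rfloor$ congruence dilatations along the identity. In the paper's order of argument, (ii) for $0 \leq s < 1$ is established first via a single application of Proposition~\ref{pro: CY01 Proposition 4.2}, and this scheme-level isomorphism is then used to deduce (i), and subsequently (ii), for general $s$.
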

\begin{proof}
We have $m + 1 \lleq_{\T} l$ (as is implicit in the existence of the given
Chai-Yu isomorphism), i.e., $m + 1 \lleq_L l$ for some extension $L/F$ splitting
$\T$. If $\tilde L$ is a compositum of $L$ and $\tilde F$, we have
$\psi_{\tilde L/F}(l) = \psi_{\tilde L/L} \circ \psi_{L/F}(l) = \psi_{L/F}(l)$,
so $m + 1 \lleq_{\T_{\tilde F}} l$.
Fix compatible embeddings $\tilde L \hookrightarrow F^{\sep}$ and
$\tilde L' \hookrightarrow {F'}^{\sep}$, the former extending
$\tilde F \hookrightarrow F^{\sep}$. Hence
$\tilde F' \hookrightarrow {F'}^{\sep}$ factors through
$\tilde L' \hookrightarrow {F'}^{\sep}$ as well. 
Tautologically, $\tilde L \hookrightarrow F^{\sep}$ and
$\tilde L' \hookrightarrow {F'}^{\sep}$
are also compatible embeddings
for $\tilde F \leftrightarrow_{\psi_{\tilde F/F}(l) = l} \tilde F'$. 

Suppose that \eqref{minimal congruent Chai-Yu}
and the existence assertion of \eqref{minimal congruent Chai-Yu group scheme}
are known. Then each $t$ as in \eqref{minimal congruent Chai-Yu group scheme}
has a corresponding $t'$ (by \eqref{minimal congruent Chai-Yu})
and vice versa (by symmetry); therefore the uniqueness
assertion in \eqref{minimal congruent Chai-Yu group scheme} follows from
\cite[Lemma 8.5.1]{CY01}.

The proposition being trivial for $s = 0$, our first aim is to prove
just \eqref{minimal congruent Chai-Yu} for $0 < s < 1$.
By the definitions of
$\T(\tilde F)_s$ and $\T'(\tilde F')_s$ (see \cite[Definition B.10.8(2)]{KP23},
and the description involving dilatation in
\cite[the proof of Lemma B.10.9]{KP23}), and the fact that
$\T(\tilde F)_b/\T(\tilde F)_{m + 1} \rightarrow \T'(\tilde F')_b/\T'(\tilde F')_{m + 1}$
takes the image of $\T(\tilde F)_1$ to that of $\T'(\tilde F')_1$
(because the Chai-Yu isomorphism is a morphism
of schemes over $\mO_F/\p_F^{m + 1} = \mO_{F'}/\p_{F'}^{m + 1}$),
it suffices to show that for any homomorphism
$\S \rightarrow \T_{\tilde F}$ with $\S$ an induced torus over $\tilde F$, the
isomorphism $\T(\tilde F)_b/\T(\tilde F)_{m + 1}
\rightarrow \T'(\tilde F')_b/\T'(\tilde F')_{m + 1}$ takes the image of
$\S(\tilde F)_s$ in the source into that of $\S'(\tilde F')_s$ under
some homomorphism from an induced torus $\S'$ over $\tilde F'$
to $\T'_{\tilde F'}$ (we thank Kaletha for informing us that in
\cite[Definition B.10.8(2)]{KP23}, $R$ varies over
induced $K$-tori; this is why we take $\S$ to be an induced
torus over $\tilde F$ and not over $F$).

The map $\S \rightarrow \T_{\tilde F}$ factors through
the maximal $\tilde L$-split
($\tilde F$-torus) quotient of $\S$, since $X^*(\T) \rightarrow X^*(\S)$
has image inside $X^*(\S)^{\Gal(F^{\sep}/\tilde L)}$. $\Gal(F^{\sep}/\tilde F)$
permutes some basis for $X^*(\S)$, and hence also the set of
$\Gal(F^{\sep}/\tilde L)$-orbits of elements of this basis,
and hence also some basis of the character lattice
$X^*(\S)^{\Gal(F^{\sep}/\tilde L)}$ of the maximal $\tilde L$-split
quotient of $\S$.  Thus, the maximal $\tilde L$-split quotient of
$\S$ is an induced torus as well, with which we may now replace $\S$,
to assume that $\S$ is $\tilde L$-split, and in particular
at most $l$-ramified, and satisfying $m + 1 \lleq_{\S} l$.

This gives us a torus $\S'$ over $\tilde F'$, which is clearly induced
and splits over $\tilde L'$, and a homomorphism
$\S' \rightarrow \T'_{\tilde F'}$ such that
$X^*(\T') \rightarrow X^*(\S')$ identifies with the homomorphism
$X^*(\T) \rightarrow X^*(\S)$ dual to $\S \rightarrow \T_{\tilde F}$.
Since $\S'$ is induced, \eqref{minimal congruent Chai-Yu}
will follow if we show that $\T(\tilde F)/\T(\tilde F)_{m + 1}
\rightarrow \T'(\tilde F')/\T'(\tilde F')_{m + 1}$ takes the image of
$\S(\tilde F)_s$ in the source to that of $\S'(\tilde F')_s$ in the target.

Proposition \ref{pro: standard isomorphism strictly Henselian} shows that
standard isomorphisms $\S(\tilde F)/\S(\tilde F)^{\naive}_r
\rightarrow \S'(\tilde F')/\S'(\tilde F')^{\naive}_r$
and $\T(\tilde F)/\T(\tilde F)^{\naive}_r
\rightarrow \T'(\tilde F')/\T'(\tilde F')^{\naive}_r$
exist, the latter clearly restricting to the
isomorphism $\T(\tilde F)_b/\T(\tilde F)^{\naive}_r
\rightarrow \T'(\tilde F')_b/\T'(\tilde F')^{\naive}_r$
in the statement of the proposition.
Applying Lemma \ref{lm: standard isomorphism functorial}
in the context of the homomorphisms
$\S \rightarrow \T_{\tilde F}$ and
$\S' \rightarrow \T'_{\tilde F'}$, and using
that the standard isomorphism
$\S(\tilde F)/\S(\tilde F)^{\naive}_r
\rightarrow \S'(\tilde F')/\S'(\tilde F')^{\naive}_r$
identifies the images of $\S(\tilde F)_s$ and $\S'(\tilde F')_s$
(by Lemma \ref{lm: standard isomorphism decreasing r}),
it follows that the images of $\S(\tilde F)_s$ and $\S'(\tilde F')_s$ agree
in $\T(\tilde F)/\T(\tilde F)^{\naive}_r =
\T'(\tilde F')/\T'(\tilde F')^{\naive}_r$.

By the choice of $r$, the images
of $\S(\tilde F)_s$ and $\S'(\tilde F')_s$ in
$\T(\tilde F)/\T(\tilde F)_{m + 1}$ and
$\T'(\tilde F')/\T'(\tilde F')_{m + 1}$, respectively,
match under the isomorphism
$\T(\tilde F)_b/\T(\tilde F)_{m + 1} \rightarrow \T'(\tilde F')_b/\T'(\tilde F')_{m + 1}$.
Thus, \eqref{minimal congruent Chai-Yu} follows
for $0 < s < 1$, and hence for $0 \leq s < 1$.

Now let us prove \eqref{minimal congruent Chai-Yu group scheme}
for $0 \leq s < 1$; this is what necessitated needing a Chai-Yu
isomorphism of level $m + 1$. The case of $s = 0$ is immediate:
$\mathcal{T}^{\ft} \times_{\mO_F} \mO_F/\p_F^{m + 1} \rightarrow
{\mathcal{T}'}^{\ft} \times_{\mO_{F'}} \mO_{F'}/\p_{F'}^{m + 1}$
restricts to an isomorphism
$\mathcal{T} \times_{\mO_F} \mO_F/\p_F^{m + 1} \rightarrow
{\mathcal{T}'} \times_{\mO_{F'}} \mO_{F'}/\p_{F'}^{m + 1}$,
and we have $\mathcal{T} = \mathcal{T}_0$ and
$\mathcal{T}' = \mathcal{T}'_0$.
Hence we assume $s > 0$. Since \eqref{minimal congruent Chai-Yu}
is known in this case, with $m$
replaced by $m + 1$, $\mcT_s$ and $\mcT'_s$ are respectively the dilatations of
$\mcT = \mcT_0$ and $\mcT' = \mcT_0'$ with respect to the same subgroup
$\W_s$ of
$\mcT \times_{\mO_F} \kappa_F = \mcT' \times_{\mO_{\tilde F'}} \kappa_{F'}$
(identified using the Chai-Yu isomorphism).
Now the required isomorphism
$\mathcal{T}_s \times_{\mO_F} \mO_F/\p_F^m \rightarrow
\mathcal{T}'_s \times_{\mO_{F'}} \mO_{F'}/\p_{F'}^m$,
described as in \eqref{minimal congruent Chai-Yu group scheme},
follows from
Proposition \ref{pro: CY01 Proposition 4.2}, which applies since this subgroup
is reduced and hence smooth over $\kappa_F = \kappa_{F'}$, and since
$\mcT$ (over $\mO_F$) and $\mcT'$ (over $\mO_{F'}$) are smooth;
note that $m = \lfloor m + 1 - s \rfloor$.
For this step, we needed $m + 1$ in place of $m$. Note that
$\mcT_s$ is not a subgroup scheme of $\mcT = \mcT_0$, and
\cite[Proposition 4.2]{CY01} (summarized in
Proposition \ref{pro: CY01 Proposition 4.2}) is doing much work here.

Now consider general $s$ with $0 \leq s < m$.

Let us prove \eqref{minimal congruent Chai-Yu}.
If $t \in \T(\tilde F)_s$, and if $t' \in \mcT'(\mO_{\tilde F'})$
has the same image as $t$ in
$\mcT(\mO_{\tilde F}/\p_{\tilde F}^{m + 1}) =
\mcT'(\mO_{\tilde F'}/\p_{\tilde F'}^{m + 1})$, then since $t$ and $t'$
have the same image in the special fiber $\mcT(\mO_{\tilde F}/\p_{\tilde F}) =
\mcT'(\mO_{\tilde F'}/\p_{\tilde F'})$, it follows that
$t' \in \T'(\tilde F')_{s - \lfloor s \rfloor}$. Thus, by
\eqref{minimal congruent Chai-Yu group scheme} in the case where $0 \leq s < 1$
(applied with $s - \lfloor s \rfloor$ in place of $s$), $t$ and
$t'$ have the same image in
$\mcT_{s - \lfloor s \rfloor}(\mO_{\tilde F}/\p_{\tilde F}^m)
= \mcT_{s - \lfloor s \rfloor}'(\mO_{\tilde F'}/\p_{\tilde F'}^m)$.
The conditions $t \in \T(\tilde F)_s$ and
$t' \in \T'(\tilde F')_s$ both translate to this image
having trivial further image in
$\mcT_{s - \lfloor s \rfloor}(\mO_{\tilde F}/\p_{\tilde F}^{\lfloor s \rfloor})
= \mcT_{s - \lfloor s \rfloor}'(\mO_{\tilde F'}/\p_{\tilde F'}^{\lfloor s \rfloor})$. Thus, \eqref{minimal congruent Chai-Yu} follows.

Applying \eqref{minimal congruent Chai-Yu group scheme} with
$s - \lfloor s \rfloor$ in place of $s$,
and applying Proposition \ref{pro: CY01 Proposition 4.2}
$\lfloor s \rfloor$ times
\eqref{minimal congruent Chai-Yu group scheme} follows (use
that $\lfloor m + 1 - (s - \lfloor s \rfloor) \rfloor - \lfloor s \rfloor
= \lfloor m + 1 - s \rfloor$).
\end{proof}

\subsection{Relating to the work of Chai and Yu}

\begin{pro} \label{pro: Chai-Yu isomorphism CY01}
The isomorphism of Chai and Yu described in Theorem \ref{thm: CY01}
(the right-most vertical arrow of \eqref{eqn: CY01 main result})
is a Chai-Yu isomorphism. 
\end{pro}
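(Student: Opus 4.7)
My plan is the following. Let $L/F$ and the compatible embeddings $L \hookrightarrow F^{\sep}$, $L' \hookrightarrow {F'}^{\sep}$ be as in Theorem~\ref{thm: CY01}, set $h := h(F, \T, L)$ and $r := m + h$, and fix compatible embeddings $\tilde F \hookrightarrow F^{\sep}$, $\tilde F' \hookrightarrow {F'}^{\sep}$ with $\tilde F/F$ a maximal unramified extension. Then $r \lleq_L l$ since $r \le m + 3h \le \psi_{L/F}(l)/e(L/F)$, and Lemma~\ref{lm: MP intersection} together with Remark~\ref{rmk: MP intersection} gives $\T(\tilde F)^{\naive}_r \subset \T(\tilde F)_m$ and $\T'(\tilde F')^{\naive}_r \subset \T'(\tilde F')_m$, so this $r$ is admissible for Definition~\ref{df: standard congruent Chai-Yu}\eqref{Chai-Yu isomorphism}. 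Applying Proposition~\ref{pro: standard isomorphism strictly Henselian} to the strictly Henselian realization $(\tilde F, \T_{\tilde F}) \leftrightarrow_l (\tilde F', \T'_{\tilde F'})$ and then restricting via Lemma~\ref{lm: standard isomorphism decreasing r}, one produces the unique restricted standard isomorphism $\phi \colon \T(\tilde F)_b/\T(\tilde F)^{\naive}_r \to \T'(\tilde F')_b/\T'(\tilde F')^{\naive}_r$ for this realization.

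The task reduces to showing that the middle vertical arrow $\psi$ of \eqref{eqn: CY01 main result} coincides with $\phi$: granting this, the right vertical arrow of \eqref{eqn: CY01 main result}, which is by construction the evaluation of the isomorphism of Theorem~\ref{thm: CY01} at $\mO_{\tilde F}/\p_{\tilde F}^m$, is induced from a restricted standard isomorphism, as required. By the uniqueness part of Lemma~\ref{lm: standard isomorphism well-defined}, $\psi = \phi$ will follow once I verify that whenever $\psi$ sends the class of $t \in \T(\tilde F)_b$ to the class of $t' \in \T'(\tilde F')_b$, the pair $(t, t')$ is a standard correspondent.

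The commutativity of \eqref{eqn: CY01 main result} identifies $\psi$ with the restriction of the left vertical arrow, namely \eqref{eqn: Chai-Yu ResLFTL} evaluated at $\mO_{\tilde F}/\p_{\tilde F}^r$, via the diagonal inclusions $\T \hookrightarrow \R := \Res_{L/F} \T_L$ and $\T' \hookrightarrow \R'$. Using the explicit functorial description in Notation~\ref{notn: h}\eqref{Chai-Yu ResLFTL}, the statement $\psi(t) = t'$ translates, via these diagonals, into the requirement that for every $\chi = \chi' \in X^*(\T) = X^*(\T')$ the elements $\chi(t) \in (\mO_L \otimes_{\mO_F} \mO_{\tilde F}/\p_{\tilde F}^r)^{\times}$ and $\chi'(t') \in (\mO_{L'} \otimes_{\mO_{F'}} \mO_{\tilde F'}/\p_{\tilde F'}^r)^{\times}$ match under the ring isomorphism assembled from $L \leftrightarrow_{\psi_{L/F}(l)} L'$ and $\tilde F \leftrightarrow_l \tilde F'$. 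I would next fix compatible embeddings $\tilde L \hookrightarrow F^{\sep}$ and $\tilde L' \hookrightarrow {F'}^{\sep}$ lying over both the chosen $L, L'$ and $\tilde F, \tilde F'$ (taking $\tilde L = L \cdot \tilde F$ and similarly for $\tilde L'$), and project the ring isomorphism onto the $(\tilde L, \tilde L')$-factor of the product decompositions $\mO_L \otimes_{\mO_F} \mO_{\tilde F} = \prod_i \mO_{\tilde L_i}$ and $\mO_{L'} \otimes_{\mO_{F'}} \mO_{\tilde F'} = \prod_j \mO_{\tilde L'_j}$. Under this projection, the matching condition on $\chi(t)$ and $\chi'(t')$ becomes precisely the defining condition for $(t, t')$ to be a standard correspondent at level $r$ with splitting extension $\tilde L$, as in Notation~\ref{notn: standard congruent Chai-Yu}.

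The main obstacle is the projection-compatibility just invoked: that the projection of the ring isomorphism assembled for $\R$ onto the chosen factor coincides with the standard ring isomorphism $\mO_{\tilde L}/\p_{\tilde L}^{e(L/F) r} \cong \mO_{\tilde L'}/\p_{\tilde L'}^{e(L/F) r}$ attached to $\tilde L \leftrightarrow_{\psi_{L/F}(l)} \tilde L'$. I would deduce this from the transitivity statement of Remark~\ref{rmk: properties E l(1) E' rephrasing}: the realization $\tilde L \leftrightarrow \tilde L'$ is produced unambiguously from $F \leftrightarrow_l F'$ by going through either $L \leftrightarrow L'$ (and then along the unramified extensions $\tilde L/L$, $\tilde L'/L'$) or $\tilde F \leftrightarrow \tilde F'$ (and then along $\tilde L/\tilde F$, $\tilde L'/\tilde F'$), so the tensor-product description of the ring isomorphism and its factor-by-factor description agree. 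The remainder is bookkeeping with Deligne's triples along the lines of Subsubsection~\ref{subsubsec: TrE}.
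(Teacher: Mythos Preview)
Your approach is correct and is essentially the same as the paper's: both identify the middle vertical arrow of \eqref{eqn: CY01 main result} with a restricted standard isomorphism by unwinding the explicit description of \eqref{eqn: Chai-Yu ResLFTL}, and the ``main obstacle'' you isolate (that the projection of the tensor-product ring isomorphism onto the $\tilde L$-factor agrees with the isomorphism coming from $\tilde L \leftrightarrow_{\psi_{L/F}(l)} \tilde L'$) is exactly the commutative diagram at the end of the paper's proof of Lemma~\ref{lm: weakly induced Chai-Yu base-change}. The only organizational difference is that the paper first establishes this for $\R$ as a standalone lemma (Lemma~\ref{lm: weakly induced Chai-Yu base-change}, showing the left vertical arrow is itself a Chai-Yu isomorphism) and then transfers to $\T$ via Lemmas~\ref{lm: tori automorphisms} and~\ref{lm: standard isomorphism functorial}, whereas you work directly with the $X^*(\T)$-valued description of $\mathcal{R}^{\ft}(\mO_{\tilde F}/\p_{\tilde F}^r)$ and bypass those functoriality lemmas; the paper's packaging has the advantage that Lemma~\ref{lm: weakly induced Chai-Yu base-change} is reused in Proposition~\ref{pro: weakly induced tori Chai-Yu isomorphism}.
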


The main input into the proof of the above proposition
is the following lemma.

\begin{lm} \label{lm: weakly induced Chai-Yu base-change}
Consider the setting of Notation \ref{notn: h}\eqref{Chai-Yu ResLFTL}.
Thus, $(F, \T) \leftrightarrow_l (F', \T')$, and we consider
$(F, \R := \Res_{L/F} \T_L) \leftrightarrow_l (F', \R' := \Res_{L'/F'} \T'_{L'})$,
where $L/F$ is an at most $l$-ramified
finite Galois extension splitting $\T$, and $L \hookrightarrow F^{\sep}$
and $L' \hookrightarrow {F'}^{\sep}$ are compatible embeddings.
Let $0 < m \lleq_L l$. Then the isomorphism
$\mathcal{R} \times_{\mO_F} \mO_F/\p_F^m \rightarrow
\mathcal{R}' \times_{\mO_{F'}} \mO_{F'}/\p_{F'}^m$ of
\eqref{eqn: Chai-Yu ResLFTL} is a Chai-Yu isomorphism.
\end{lm}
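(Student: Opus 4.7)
The plan is to verify Definition \ref{df: standard congruent Chai-Yu}\eqref{Chai-Yu isomorphism} for the isomorphism \eqref{eqn: Chai-Yu ResLFTL} with the choice $r = m$.

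First, observe that $\R = \Res_{L/F}\T_L$ is an induced torus: since $L/F$ is Galois and $L$ splits $\T$, the $\Gamma_L$-module $X^*(\T)$ is trivial, so $X^*(\R) = \Ind_{\Gamma_L}^{\Gamma_F} X^*(\T) \cong \ZZ[\Gamma_F/\Gamma_L]^{\oplus \dim \T}$ as a $\Gamma_F$-module, which carries a $\Gamma_F$-permuted basis. Hence $h(\tilde F, \R) = 0$, and Lemma \ref{lm: MP intersection} (applied with $h=0$), together with the ``easier'' reverse inclusion $\R(\tilde F)_m \subset \R(\tilde F)^{\naive}_m$ recorded after it, gives $\R(\tilde F)^{\naive}_m = \R(\tilde F)_m$; the same equality holds for $\R'$ (using Remark \ref{rmk: MP intersection}). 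Since $L$ splits $\R$ and $m \lleq_L l$, we also have $m \lleq_\R l$. Thus $r = m$ satisfies the conditions in Definition \ref{df: standard congruent Chai-Yu}\eqref{Chai-Yu isomorphism}.

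By Proposition \ref{pro: standard isomorphism strictly Henselian} applied to $(\tilde F, \R_{\tilde F}) \leftrightarrow_l (\tilde F', \R'_{\tilde F'})$, a standard isomorphism $\R(\tilde F)/\R(\tilde F)^{\naive}_m \to \R'(\tilde F')/\R'(\tilde F')^{\naive}_m$ exists, and it restricts (by Lemma \ref{lm: standard isomorphism decreasing r}) to a restricted standard isomorphism on the bounded parts. It remains to show that this restricted standard isomorphism agrees with the isomorphism $\mathcal{R}^{\ft}(\mO_{\tilde F}/\p_{\tilde F}^m) \to \mathcal{R}'^{\ft}(\mO_{\tilde F'}/\p_{\tilde F'}^m)$ obtained by evaluating \eqref{eqn: Chai-Yu ResLFTL}.

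For this comparison, fix compatible embeddings $\tilde L \hookrightarrow F^{\sep}$ and $\tilde L' \hookrightarrow {F'}^{\sep}$ extending those of $L$ and $\tilde F$; Remark \ref{rmk: properties E l(1) E' rephrasing} guarantees that the resulting realization $\tilde L \leftrightarrow_{\psi_{L/F}(l)} \tilde L'$ is unambiguous. By Notation \ref{notn: h}\eqref{Chai-Yu ResLFTL}, the Chai-Yu isomorphism on $\mO_{\tilde F}/\p_{\tilde F}^m$-points is induced by the ring identification $(\mO_L \otimes_{\mO_F} \mO_{\tilde F})/\p_F^m \cong (\mO_{L'} \otimes_{\mO_{F'}} \mO_{\tilde F'})/\p_{F'}^m$ coming from $L \leftrightarrow_{\psi_{L/F}(l)} L'$ and $\tilde F \leftrightarrow_l \tilde F'$. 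For $t \in \R(\tilde F)_b = \Hom(X^*(\T), (\mO_L \otimes \mO_{\tilde F})^\times)$ and a character $\sigma \cdot \chi \in X^*(\R)$ (with $\sigma \in \Gamma_F/\Gamma_L$ and $\chi \in X^*(\T)$), one has $(\sigma \cdot \chi)(t) = \sigma^*(t(\chi)) \in \tilde L^\times$, where $\sigma^*$ is the $F$-algebra map $\mO_L \otimes_{\mO_F} \mO_{\tilde F} \to \mO_{\tilde L}$ determined by $l \otimes x \mapsto \sigma(l) \cdot x$. Modulo $\p_F^m$, $\sigma^*$ descends to a map $(\mO_L \otimes \mO_{\tilde F})/\p_F^m \to \mO_{\tilde L}/\p_{\tilde L}^{e m}$ (with $e = e(L/F)$), and this descended map corresponds, under the Chai-Yu identification, to the analogous $(\sigma')^*$ on the primed side via the Deligne identification $\mO_{\tilde L}/\p_{\tilde L}^{e m} = \mO_{\tilde L'}/\p_{\tilde L'}^{e m}$ --- precisely the identification that governs standard correspondence for $(\tilde F, \R_{\tilde F}) \leftrightarrow_l (\tilde F', \R'_{\tilde F'})$. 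Consequently, the Chai-Yu matching condition ``$t(\chi) \equiv t'(\chi) \pmod{\p_F^m}$ for every $\chi \in X^*(\T)$'' becomes equivalent to the standard-correspondence condition on all characters of $\R$.

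The main obstacle is this final compatibility of $\sigma^*$ with $(\sigma')^*$ under Deligne: it reduces to the transitivity of Deligne's assignment of compatible embeddings in the tower $F \subset L \subset \tilde L$ versus $F \subset \tilde F \subset \tilde L$ (ensuring that $\tilde L \leftrightarrow_{\psi_{L/F}(l)} \tilde L'$ is independent of the route of construction, Remark \ref{rmk: properties E l(1) E' rephrasing}), combined with the $\Gamma_{L/F} = \Gamma_{L'/F'}$-equivariance of the truncated identifications (Remark \ref{rmk: properties E l(1) E'}\eqref{GammaEF GammaE'F'}), which produces the correct bijection between the families $\{\sigma^*\}$ and $\{(\sigma')^*\}$ of projections.
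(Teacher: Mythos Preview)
Your proof is correct and follows essentially the same route as the paper's: take $r = m$ (using that $\R$ is induced so the naive and minimal congruent filtrations coincide), then verify directly that the explicit description of \eqref{eqn: Chai-Yu ResLFTL} on $\mO_{\tilde F}/\p_{\tilde F}^m$-points matches the standard-correspondence condition by writing out the characters $\sigma \cdot \chi$ of $\R$ as compositions with the projections $\sigma^*: \mO_L \otimes_{\mO_F} \mO_{\tilde F} \to \mO_{\tilde L}$ and invoking the Deligne-compatibility of these projections. The paper additionally reduces to $\T = \GG_m$ by choosing a basis of $X^*(\T)$ before carrying out this computation, which is only a notational simplification; your argument handles general $\T$ directly, which is fine. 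One small notational slip: you write $h(\tilde F, \R)$ where $h(F, \R)$ is meant (Lemma \ref{lm: MP intersection} is stated over the base $F$, concluding about $\tilde F$-points), but the content is unaffected since $\R$ is induced and its standard model is already smooth.
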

\begin{proof}
Note that $\mathcal{R}^{\ft} = \mathcal{R}$ and
${\mathcal{R}'}^{\ft} = \mathcal{R}'$.
Some of the proof will be written informally, for lightness of reading.

Let $\{\chi_i = \chi_i'\}_i$ be a basis for $X^*(\T) = X^*(\T')$. It
gives an isomorphism $\R = \prod_i \Res_{L/F} \GG_m,
\R' = \prod_i \Res_{L'/F'} \GG_m$. The realization
$(F, \R) \leftrightarrow_l (F', \R')$ is then, in an obvious sense, a product
of the obvious realizations $\prod_i (F, \Res_{L/F} \GG_m) \leftrightarrow_l
(F', \Res_{L'/F'} \GG_m)$.

Further, the isomorphism $\mathcal{R} \times_{\mO_F} \mO_F/\p_F^m
\rightarrow \mathcal{R}' \times_{\mO_{F'}} \mO_{F'}/\p_{F'}^m$
given by \eqref{eqn: Chai-Yu ResLFTL} then
becomes the product of the isomorphisms
$\Res_{\mO_L/\mO_F} \GG_m \times_{\mO_F} \mO_F/\p_F^m
\rightarrow \Res_{\mO_{L'}/\mO_{F'}} \GG_m \times_{\mO_{F'}} \mO_{F'}/\p_{F'}^m$,
each of which is given, at the level of $A$-points for an algebra
$A$ over $\mO_F/\p_F^m = \mO_{F'}/\p_{F'}^m$, by the identification
$((\mO_L/\p_F^m \mO_L) \otimes_{\mO_F} A)^{\times}
\rightarrow ((\mO_{L'}/\p_{F'}^m \mO_{L'}) \otimes_{\mO_{F'}} A)^{\times}$.
It is enough to prove that this isomorphism is a Chai-Yu isomorphism
for $(F, \Res_{L/F} \GG_m) \leftrightarrow_l (F', \Res_{L'/F'} \GG_m)$.

In other words, we may assume that $\T = \GG_m$, though the chosen
splitting extension used to define $\R = \Res_{L/F} \T_L = \Res_{L/F} \GG_m$
is still $L/F$. 

Let $\tilde F \hookrightarrow F^{\sep}$ and
$\tilde F' \hookrightarrow {F'}^{\sep}$ be compatible
extensions, with $\tilde F/F$ a maximal unramified extension.
Since $\R = \Res_{L/F} \GG_m$ is an induced torus, it is standard (and easy)
that $\R(\tilde F)^{\naive}_m = \R(\tilde F)_m$. Therefore, keeping in mind
Lemma \ref{lm: congruent isomorphism}\eqref{congruent isomorphism well-defined},
we may take $r = m$ in the definition of a Chai-Yu isomorphism. 
It is enough to show that the isomorphism
$\R(\tilde F)_b/\R(\tilde F)_m \rightarrow \R'(\tilde F')_b/\R'(\tilde F')_m$
obtained by evaluating $\mathcal{R} \times_{\mO_F} \mO_F/\p_F^m
\rightarrow \mathcal{R}' \times_{\mO_{F'}} \mO_{F'}/\p_{F'}^m$
at $\mO_{\tilde F}/\p_{\tilde F}^m = \mO_{\tilde F'}/\p_{\tilde F'}^m$
is a ``restricted standard isomorphism'' for
$(\tilde F, \R_{\tilde F}) \leftrightarrow_l (\tilde F', \R'_{\tilde F'})$.

Let $\tilde L \hookrightarrow F^{\sep}$ 
(resp., $\tilde L' \hookrightarrow {F'}^{\sep}$) be a compositum of
$L \hookrightarrow F^{\sep}$ and $\tilde F \hookrightarrow F^{\sep}$
(resp.,
$L' \hookrightarrow {F'}^{\sep}$ and $\tilde F' \hookrightarrow {F'}^{\sep}$).
It is then immediate that $\tilde L \hookrightarrow F^{\sep}$
and $\tilde L' \hookrightarrow {F'}^{\sep}$ have the same stabilizer
in $\Gamma_F/I_F^l = \Gamma_{F'}/I_{F'}^l$, i.e., are compatible embeddings
for $F \leftrightarrow_l F'$, and hence also
for for $\tilde F \leftrightarrow_l \tilde F'$. 
Note that $m \lleq_{\tilde L} l$ (since $\psi_{\tilde L/F} = \psi_{L/F}$), and
that $\tilde L \leftrightarrow_{\psi_{\tilde L/F}(l)} \tilde L'$
lies over both $L \leftrightarrow_{\psi_{L/F}(l)} L'$
and $\tilde F \leftrightarrow_l \tilde F'$.
Set $e = e(L/F) = e(\tilde L/\tilde F)$.
We use $\tilde L/\tilde F$ as a splitting extension for $\R_{\tilde F}$.

Thus, if $t \in \R(\tilde F)_b = \mathcal{R}(\mO_{\tilde F})$ and
$t' \in \R'(\tilde F')_b = \mathcal{R}'(\mO_{\tilde F'})$ have
the same image in $\mathcal{R}(\mO_{\tilde F}/\p_{\tilde F}^m)
= \mathcal{R}'(\mO_{\tilde F'}/\p_{\tilde F'}^m)$, it is enough to
show that for all $\chi = \chi' \in X^*(\R'_{\tilde F'}) =
X^*(\R_{\tilde F})$, $\chi(t) \in \mO_{\tilde L}^{\times}$
and $\chi'(t') \in \mO_{\tilde L'}^{\times}$ have the same image
in $(\mO_{\tilde L}/\p_{\tilde L}^{em})^{\times}
= (\mO_{\tilde L'}/\p_{\tilde L'}^{e m})^{\times}$, i.e.,
in $(\mO_{\tilde L}/\p_{\tilde F}^m \mO_{\tilde L})^{\times}
= (\mO_{\tilde L'}/\p_{\tilde F'}^m \mO_{\tilde L'})^{\times}$.

It is enough to prove this for $\chi = \chi'$ running over some basis
of $X^*(\R_{\tilde F}) = X^*(\R) = X^*(\R') = X^*(\R'_{\tilde F'})$.
We use the basis $\{\chi_{\sigma} = \chi'_{\sigma'} \mid
\sigma = \sigma' \in \Gamma_{L'/F'} = \Gamma_{L/F}\}$, where
for each $\tilde L$-algebra $A$,
$\chi_{\sigma} : (\Res_{L/F} \GG_m)(A) = (L \otimes_F A)^{\times}
\rightarrow A^{\times} = \GG_m(A)$ is a restriction of the 
map $L \otimes_F A \rightarrow A$ that takes $l \otimes a$ to
$\sigma(l) a$, and $\chi'_{\sigma'}$ has a similar description.
Taking $A = \tilde L$ and viewing $t$ as an element of
\[ \mathcal{R}(\mO_{\tilde F}) \subset \mathcal{R}(\mO_{\tilde L})
= (\mO_L \otimes_{\mO_F} \mO_{\tilde L})^{\times}
\subset (L \otimes_F \tilde L)^{\times}
= \R(\tilde L), \]
and similarly with $t'$, the lemma follows from the following
commutative diagram:
\[
\xymatrix{
(\mO_L/\p_F^m \mO_L) \otimes_{\mO_F}
(\mO_{\tilde L}/\p_{\tilde F}^m \mO_{\tilde L})
\ar[r] \ar[d] & \mO_{\tilde L}/\p_{\tilde F}^m \mO_{\tilde L} \ar[d] \\
(\mO_{L'}/\p_{F'}^m \mO_{L'}) \otimes_{\mO_{F'}} (\mO_{\tilde L'}/\p_{\tilde F'}^m \mO_{\tilde L'})
\ar[r] & \mO_{\tilde L'}/\p_{\tilde F'}^m \mO_{\tilde L'}
},
\]
where the top horizontal arrow sends $l \otimes \tilde l$
to $\sigma(l) \tilde l$, and the bottom horizontal arrow is analogous.
\end{proof}

\begin{proof}[Proof of Proposition \ref{pro: Chai-Yu isomorphism CY01}]
Since the map \eqref{eqn: Chai-Yu ResLFTL} is a Chai-Yu isomorphism
(Lemma \ref{lm: weakly induced Chai-Yu base-change}), the
left vertical arrow of \eqref{eqn: CY01 main result} takes
any element of its source to a standard correspondent
of it. In other words, it is a restriction of the standard isomorphism
$\R(\tilde F)/\R(\tilde F)_{m + h} = \R(\tilde F)/\R(\tilde F)^{\naive}_{m + h}
\rightarrow \R'(\tilde F')/\R'(\tilde F')^{\naive}_{m + h}
= \R'(\tilde F')/\R'(\tilde F')_{m + h}$, which exists by
Proposition \ref{pro: standard isomorphism strictly Henselian}.
Since the maps $X^*(\R) = X^*(\R') \rightarrow X^*(\T') = X^*(\T)$
that are dual to $\T \hookrightarrow \R$ and $\T' \hookrightarrow \R'$
coincide (Lemma \ref{lm: tori automorphisms}), it follows from
Proposition \ref{pro: standard isomorphism strictly Henselian}
and Lemma \ref{lm: standard isomorphism functorial} that the middle
vertical arrow of \eqref{eqn: CY01 main result} also sends each element
of its source to a standard correspondent of it
(in fact, this gives an alternate justification for the existence
of the middle vertical arrow of \eqref{eqn: CY01 main result}).
By definition (see Definition \ref{df: standard congruent Chai-Yu}\eqref{Chai-Yu isomorphism}),
this implies that $\mcT^{\ft} \times_{\mO_F} \mO_F/\p_F^m \rightarrow
{\mcT'}^{\ft} \times_{\mO_{F'}} \mO_{F'}/\p_{F'}^m$
is a Chai-Yu isomorphism.
\end{proof}

\section{The case of weakly induced tori}
\label{sec: weakly induced tori}

In this section, we will restrict to a class
of tori that includes all induced tori, namely,
the class of tori satisfying the beautiful condition (T)
identified in \cite{Yu15}, which, following \cite{KP23},
we will refer to as the class of weakly induced tori.
For these tori, the standard and minimal congruent filtrations coincide
(\cite[Corollary B.10.13]{KP23}).
We will show that for weakly induced tori,
standard, congruent and Chai-Yu isomorphisms exist in the
``best possible'' generality. It will follow that for these
tori, congruent isomorphisms are a special case of standard isomorphisms.

\subsection{Weakly induced tori} \label{subsec: weakly induced tori}

\begin{notn} \label{notn: weakly induced}
A torus $\T$ over a DVHF $F$ is said to be weakly
induced if it becomes an induced torus over some finite tamely ramified
extension of $F$. It is easy to see (\cite[Remark B.6.3]{KP23})
that $\T$ is weakly induced if and only if $X^*(\T)$ has a basis
that is permuted by the wild inertia group
$I_F^{> 0} := \bigcup_{r > 0} I_F^r \subset I_F \subset \Gamma_F$.
\end{notn}

The following lemma is one reason why weakly induced tori are easy to work with.
\begin{lm} \label{lm: weakly induced}
Let $\T$ be a weakly induced torus over a DVHF $F$ with perfect
residue field.
For any $r \geq 0$, we have
$\T(F)_r = \T(F)^{\std}_r$, and for any $r > 0$, we have
$\T(F)^{\std}_r = \T(F)^{\naive}_r$.
Consequently, using Remark \ref{rmk: naive filtration},
$\T(L_1)_r = (\T(L_2)_{e(L_2/L_1) r})^{\Gal(L_2/L_1)}$
and $\T_1(F)_r = \T_1(F) \cap \T_2(F)_r$ whenever
$r > 0$, $L_2/L_1/F$ is a chain of finitely ramified
separable field extensions with $L_2/L_1$ Galois,
and $\T_1 \hookrightarrow \T_2$ is an injective homomorphism
of weakly induced tori over $F$.
\end{lm}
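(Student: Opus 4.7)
The plan is to prove the two equalities in the first sentence by appealing to \cite{KP23}, and to derive the two consequence assertions from these equalities together with Remark \ref{rmk: naive filtration}. For the first equality $\T(F)_r = \T(F)^{\std}_r$ (all $r \geq 0$), I would invoke \cite[Corollary B.10.13]{KP23}, which for tori satisfying condition (T) --- equivalently, weakly induced tori --- asserts that the minimal congruent filtration coincides with the standard Moy--Prasad filtration.

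For the second equality $\T(F)^{\std}_r = \T(F)^{\naive}_r$ (all $r > 0$), the inclusion $\subset$ is immediate from $\T(F)^{\std}_r = \T(F)^0 \cap \T(F)^{\naive}_r$, so it suffices to establish the reverse inclusion $\T(F)^{\naive}_r \subset \T(F)^0$ for $r > 0$. For $t \in \T(F)^{\naive}_r$, fixing a splitting field $L/F$ of $\T$, each character value $\chi(t) \in \mO_L^\times$ is a $1$-unit, so the image of $t$ in $\mathcal{T}^{\ft}_L(\kappa_L) = (\kappa_L^\times)^{\dim \T}$ is trivial. For weakly induced $\T$, the ft-N\'eron model $\mathcal{T}^{\ft}$ over $\mO_F$ admits good tame descent from the induced case, which allows this triviality of reduction to descend to triviality in the component group $\pi_0(\mathcal{T}^{\ft}_{\kappa_F})$; this step is encapsulated in \cite[Propositions B.10.4 and B.10.13]{KP23}, which in the general case give the inclusion $\T(F)_r \subset \T(F)^{\naive}_r$ and in the weakly induced setting upgrade it to equality for $r > 0$. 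Combined with the first equality, this yields $\T(F)^{\std}_r = \T(F)_r = \T(F)^{\naive}_r$ for $r > 0$.

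The two consequences then follow formally. Applying the first two statements of the lemma to $\T$ over both $L_1$ and $L_2$ (which remain weakly induced under base change) gives $\T(L_i)_r = \T(L_i)^{\naive}_r$ for $r > 0$; chaining with Remark \ref{rmk: naive filtration} yields
\[ \T(L_1)_r = \T(L_1)^{\naive}_r = \T(L_1) \cap \T(L_2)^{\naive}_{e(L_2/L_1) r} = \T(L_1) \cap \T(L_2)_{e(L_2/L_1) r}, \]
which coincides with $(\T(L_2)_{e(L_2/L_1) r})^{\Gal(L_2/L_1)}$ since $\T(L_2)_{e(L_2/L_1) r}$ is $\Gal(L_2/L_1)$-stable and $\T(L_1) = \T(L_2)^{\Gal(L_2/L_1)}$. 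For the second consequence, the same chain produces $\T_1(F)_r = \T_1(F)^{\naive}_r = \T_1(F) \cap \T_2(F)^{\naive}_r = \T_1(F) \cap \T_2(F)_r$. The main obstacle is the inclusion $\T(F)^{\naive}_r \subset \T(F)^0$ for $r > 0$, as this is where the weakly induced hypothesis is essential and requires nontrivial input about the structure of the N\'eron model; everything else is formal manipulation.
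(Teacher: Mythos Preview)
Your overall strategy matches the paper's: both equalities are quoted from \cite{KP23}, and the consequences are then read off from Remark \ref{rmk: naive filtration}. Your derivation of the two consequences is more explicit than the paper's one-line ``Consequently'', and is fine.

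The one substantive discrepancy is in the second equality $\T(F)^{\std}_r = \T(F)^{\naive}_r$ for $r > 0$. The paper simply cites \cite[Proposition B.6.4(3)]{KP23}, which is the precise statement for weakly induced tori. You instead attempt a direct argument and then appeal to \cite[Propositions B.10.4 and B.10.13]{KP23}. But those two results, as the paper itself notes just above Lemma \ref{lm: MP intersection}, only give the containment $\T(F)_r \subset \T(F)^{\naive}_r$ in general, together with $\T(F)_r = \T(F)^{\std}_r$ in the weakly induced case; neither yields the reverse inclusion $\T(F)^{\naive}_r \subset \T(F)^0$ that you identify as the crux. Your informal sketch (triviality of the image in $\pi_0$ of the special fiber via tame descent) is in the right spirit, but it is not what B.10.4 or B.10.13 assert, and as written it does not stand on its own. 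Replacing your citation with \cite[Proposition B.6.4(3)]{KP23} closes the gap and brings your proof into agreement with the paper's.
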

\begin{proof}
For the equality $\T(F)_r = \T(F)^{\std}_r$, use
\cite[Corollary B.10.13]{KP23} (and intersect with $\T(F)$).
For the equality $\T(F)^{\std}_r = \T(F)^{\naive}_r$ when $r > 0$,
see \cite[Proposition B.6.4(3)]{KP23}.
\end{proof}

\subsection{Standard, congruent and Chai-Yu
isomorphisms for weakly induced tori}
\begin{pro} \label{pro: weakly induced tori isomorphism}
Let $(F, \T) \leftrightarrow_l (F', \T')$,
with $\T$ assumed to be weakly induced.
Suppose $0 < r \lleq_{\T} l$. Then there is a standard isomorphism
$\T(F)/\T(F)^{\naive}_r = \T(F)/\T(F)_r \rightarrow
\T'(F')/\T'(F')_r = \T'(F')/\T'(F')^{\naive}_r$. If $r = m$
is an integer, then this is also a congruent isomorphism.
\end{pro}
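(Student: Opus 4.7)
The plan is to construct the isomorphism by descending a standard isomorphism over the strictly Henselian extension $\tilde F/F$. Preliminarily, I would observe that $\T'$ is also weakly induced: the identification $X^*(\T) = X^*(\T')$ is equivariant for $\Gamma_F/I_F^l = \Gamma_{F'}/I_{F'}^l$, which matches up the images of wild inertia in these quotients, so any $I_F^{>0}$-stable basis of $X^*(\T)$ transports to an $I_{F'}^{>0}$-stable basis of $X^*(\T')$. Thus Lemma \ref{lm: weakly induced} applies equally to $\T'$.

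Fix compatible embeddings $\tilde F \hookrightarrow F^{\sep}$ and $\tilde F' \hookrightarrow {F'}^{\sep}$ with $\tilde F/F$ a maximal unramified extension. Choosing a splitting extension $L/F$ for $\T$ with $r \lleq_L l$ and passing to a compositum $\tilde L = L \tilde F$ yields a splitting extension for $\T_{\tilde F}$ with $\psi_{\tilde L/\tilde F}(l) = \psi_{L/F}(l)$ and $e(\tilde L/\tilde F) = e(L/F)$, so $r \lleq_{\T_{\tilde F}} l$. By Proposition \ref{pro: standard isomorphism strictly Henselian} we obtain a standard isomorphism
\[ \phi \colon \T(\tilde F)/\T(\tilde F)^{\naive}_r \longrightarrow \T'(\tilde F')/\T'(\tilde F')^{\naive}_r, \]
which by Lemma \ref{lm: weakly induced} is equivalently a map $\T(\tilde F)/\T(\tilde F)_r \to \T'(\tilde F')/\T'(\tilde F')_r$.

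Next I would show $\phi$ is $\Gal(\tilde F/F) = \Gal(\tilde F'/F')$-equivariant. The defining property of a standard correspondent (Notation \ref{notn: standard congruent Chai-Yu}), together with the Galois-equivariance of the multiplicative group isomorphisms reviewed in Subsubsection \ref{subsubsec: close local fields multiplicative groups}, shows that pre- and post-composing $\phi$ with corresponding Galois elements again gives a standard isomorphism, which by uniqueness (Remark \ref{rmk: unique standard isomorphism}\eqref{unique standard isomorphism general T}) must equal $\phi$. Taking $\Gal(\tilde F/F)$-invariants and invoking the vanishing $H^1(\Gal(\tilde F/F), \T(\tilde F)_r) = 0$ from \cite[Proposition 13.8.1]{KP23} (and the analogue for $\T'$), $\phi$ descends to the desired isomorphism $\T(F)/\T(F)_r \to \T'(F')/\T'(F')_r$; here I would use the identifications $\T(\tilde F)_r \cap \T(F) = \T(F)_r$ and similarly for $\T'$.

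To verify the standard correspondent property at the $F$-level, I would observe that for compatible embeddings $L \hookrightarrow F^{\sep}$, $L' \hookrightarrow {F'}^{\sep}$ as above, the isomorphism $L^{\times}/(1 + \p_L^{\lceil e(L/F) r \rceil}) \to {L'}^{\times}/(1 + \p_{L'}^{\lceil e(L/F) r \rceil})$ is a restriction of its $\tilde L$-analogue (since $e(\tilde L/L) = 1$), so the standard correspondent property over $\tilde F$ transfers to one over $F$. Finally, when $r = m$ is a positive integer, the constructed map is a congruent isomorphism by Lemma \ref{lm: congruent isomorphism}\eqref{congruent isomorphism sufficient condition}: one may take $r = m$ in the sufficient-condition criterion, since $\T(\tilde F)^{\naive}_m = \T(\tilde F)_m$ and $\T'(\tilde F')^{\naive}_m = \T'(\tilde F')_m$ by Lemma \ref{lm: weakly induced}, so the strictly-Henselian standard isomorphism at level $m$ already factors through the minimal congruent quotients. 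The argument is largely assembly; the main subtlety is ensuring the infinite Galois descent (equivariance plus the cohomology vanishing applied to an infinite unramified extension), but both ingredients are in place from prior sections.
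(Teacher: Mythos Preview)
Your proof is correct and follows essentially the same approach as the paper: pass to the strictly Henselian case via Proposition \ref{pro: standard isomorphism strictly Henselian}, establish $\Gal(\tilde F/F)$-equivariance, and descend using the vanishing of $H^1(\Gal(\tilde F/F), \T(\tilde F)_r)$ from \cite[Proposition 13.8.1]{KP23}. The only differences are cosmetic: the paper cites Corollary \ref{cor: standard isomorphism functorial 2} directly for the equivariance step rather than re-deriving it, and treats the ``congruent'' claim as immediate rather than invoking Lemma \ref{lm: congruent isomorphism}\eqref{congruent isomorphism sufficient condition}; conversely, you spell out explicitly why $r \lleq_{\T_{\tilde F}} l$ and why the descended map retains the standard-correspondent property, which the paper leaves implicit.
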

\begin{proof}
Since $\T$ is weakly induced over $F$, so is
$\T'$: $I_F^{> 0}$, acting through $I_F^{> 0}/I_F^l = I_{F'}^{> 0}/I_{F'}^l$,
permutes a basis of $X^*(\T) = X^*(\T')$.
Choose compatible embeddings $\tilde F \hookrightarrow F^{\sep}$
and $\tilde F' \hookrightarrow {F'}^{\sep}$, where
$\tilde F/F$ is a maximal unramified extension.
Consider the standard isomorphism $\T(\tilde F)/\T(\tilde F)_r
\rightarrow \T'(\tilde F')/\T'(\tilde F')_r$
associated to $(\tilde F, \T_{\tilde F}) \leftrightarrow_l
(\tilde F', \T'_{\tilde F'})$
(Proposition \ref{pro: standard isomorphism strictly Henselian}).
It is equivariant for $\Gamma_{\tilde F/F} = \Gamma_{\tilde F'/F'}$
(Corollary \ref{cor: standard isomorphism functorial 2}). Thus,
as in the proof of
Lemma \ref{lm: congruent isomorphism}\eqref{congruent isomorphism sufficient condition},
the first assertion follows if we show
that $H^1(\Gamma_{\tilde F/F}, \T(\tilde F)_r)
= 0 = H^1(\Gamma_{\tilde F'/F'}, \T'(\tilde F')_r)$.
This is a special case of \cite[Proposition 13.8.1]{KP23}.
The second assertion is immediate.
\end{proof}

\begin{pro} \label{pro: weakly induced tori Chai-Yu isomorphism}
Let $(F, \T) \leftrightarrow_l (F', \T')$,
with $\T$ a weakly induced torus over $F$.
Let $m$ be a positive integer, with 
$m \lleq_{\T} l$.  Then there is a unique Chai-Yu isomorphism
$\mathcal{T}^{\ft} \times_{\mO_F} \mO_F/\p_F^m
\cong \mathcal{T'}^{\ft} \times_{\mO_{F'}} \mO_{F'}/\p_{F'}^m$.
\end{pro}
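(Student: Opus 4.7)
Uniqueness is immediate from Lemma~\ref{lm: Chai-Yu isomorphism well-defined}, so the task reduces to existence. My plan is to construct the desired isomorphism by restriction from a larger Chai-Yu isomorphism associated to an ambient induced torus. First I would choose a finite Galois extension $L/F$ splitting $\T$ with $m \lleq_L l$ (which exists by the definition of $m \lleq_\T l$, e.g.\ by taking the minimal splitting field of $\T$), fix compatible embeddings $L \hookrightarrow F^{\sep}$ and $L' \hookrightarrow {F'}^{\sep}$, and form $(F, \R := \Res_{L/F} \T_L) \leftrightarrow_l (F', \R' := \Res_{L'/F'} \T'_{L'})$ as in Notation~\ref{notn: close local fields tori}\eqref{close local fields tori restriction}. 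By Lemma~\ref{lm: weakly induced Chai-Yu base-change}, the isomorphism $\alpha\colon \mathcal{R} \times_{\mO_F} \mO_F/\p_F^m \to \mathcal{R}' \times_{\mO_{F'}} \mO_{F'}/\p_{F'}^m$ of \eqref{eqn: Chai-Yu ResLFTL} is then already a Chai-Yu isomorphism for $(F,\R) \leftrightarrow_l (F',\R')$.

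Next I would show that $\alpha$ restricts to the desired $\beta\colon \mcT^{\ft} \times_{\mO_F} \mO_F/\p_F^m \to {\mcT'}^{\ft} \times_{\mO_{F'}} \mO_{F'}/\p_{F'}^m$. For this I would use that for weakly induced $\T$ the dilatation sequence in the Chai-Yu construction collapses at level zero: the standard model $\underline{\T}^0$ (the schematic closure of $\T$ in $\mathcal{R}^{\ft}$) is already smooth, hence equals $\mcT^{\ft}$, and similarly for $\mcT'^{\ft}$ in $\mathcal{R}'^{\ft}$. Now fix compatible embeddings $\tilde F \hookrightarrow F^{\sep}$ and $\tilde F' \hookrightarrow {F'}^{\sep}$ with $\tilde F/F$ maximal unramified. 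By Definition~\ref{df: standard congruent Chai-Yu}\eqref{Chai-Yu isomorphism}, the map $\alpha$ evaluated at $\mO_{\tilde F}/\p_{\tilde F}^m = \mO_{\tilde F'}/\p_{\tilde F'}^m$ sends each element of $\R(\tilde F)_b/\R(\tilde F)_m$ to a standard correspondent. Combining Lemma~\ref{lm: tori automorphisms} with Lemma~\ref{lm: standard isomorphism functorial} applied to the diagonal inclusions $\T \hookrightarrow \R$ and $\T' \hookrightarrow \R'$ (and recalling that $\T(\tilde F)_m = \T(\tilde F)^{\naive}_m$ for weakly induced $\T$ by Lemma~\ref{lm: weakly induced}), this restricts to a map $\T(\tilde F)_b/\T(\tilde F)_m \to \T'(\tilde F')_b/\T'(\tilde F')_m$ again of standard-correspondent type. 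The schematic density of the image of $\mcT^{\ft}(\mO_{\tilde F})$ in $\mcT^{\ft} \times_{\mO_F} \mO_F/\p_F^m$ (\cite[Lemma~8.5.1]{CY01}), combined with the fact that $\mcT^{\ft}$ is closed in $\mathcal{R}^{\ft}$, then forces $\alpha$ to carry $\mcT^{\ft} \times_{\mO_F} \mO_F/\p_F^m$ into ${\mcT'}^{\ft} \times_{\mO_{F'}} \mO_{F'}/\p_{F'}^m$; the symmetric argument applied to $\alpha^{-1}$ gives the reverse inclusion, yielding the desired $\beta$. That $\beta$ is a Chai-Yu isomorphism is then immediate from its construction, since on $\mO_{\tilde F}/\p_{\tilde F}^m$-points it is induced by a restricted standard isomorphism.

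The main obstacle will be justifying that $\mcT^{\ft} = \underline{\T}^0$ for weakly induced $\T$, i.e.\ that the standard model is already the ft-N\'eron model; this rests on the smoothness of the standard model for weakly induced tori, a nontrivial but well-documented input (cf.\ the characterization in Lemma~\ref{lm: weakly induced} and the corresponding smoothness statement in \cite[Section~B.6 and B.10]{KP23}). Once this identification is in hand, the remainder is a routine density-plus-functoriality argument, in the same spirit as the extraction of the middle vertical arrow of \eqref{eqn: CY01 main result} from the leftmost one in the proof of Theorem~\ref{thm: CY01}, but without needing the $h$-fold iteration of Proposition~\ref{pro: CY01 Proposition 4.2} that was required in the general case.
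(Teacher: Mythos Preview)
Your proposal is correct and follows essentially the same route as the paper's proof: both embed $\T$ into $\R = \Res_{L/F}\T_L$, invoke Lemma~\ref{lm: weakly induced Chai-Yu base-change} for the ambient Chai-Yu isomorphism, use the weakly-induced hypothesis to get $\mcT^{\ft}$ closed in $\mathcal{R}^{\ft}$, and then restrict via schematic density plus the standard-correspondent functoriality of Lemmas~\ref{lm: tori automorphisms} and \ref{lm: standard isomorphism functorial}. The only cosmetic difference is that you phrase the key input as ``the standard model $\underline{\T}^0$ is smooth, hence equals $\mcT^{\ft}$,'' whereas the paper cites \cite[Lemma~B.7.11]{KP23} directly for the statement that $\mcT^{\ft} \hookrightarrow \mathcal{R}^{\ft}$ is a closed immersion; these amount to the same thing, and the latter citation is the sharper reference for what you need.
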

\begin{proof}
Let $m \lleq_L l$ for some finite Galois extension $L/F$ splitting $\T$, and let
$L \hookrightarrow F^{\sep}$ and $L' \hookrightarrow {F'}^{\sep}$
be compatible embeddings. Form $(F, \R := \Res_{L/F} \T_L) \leftrightarrow_l
(F', \R' := \Res_{L'/F'} \T_{L'})$.

By Lemma \ref{lm: weakly induced Chai-Yu base-change}, there exists
a Chai-Yu isomorphism $\mathcal{R}^{\ft} \times_{\mO_F} \mO_F/\p_F^m
\rightarrow {\mathcal{R}'}^{\ft} \times_{\mO_{F'}} \mO_{F'}/\p_{F'}^m$.
On the other hand, we also know that
$\mcT^{\ft} \hookrightarrow \mathcal{R}^{\ft}$ and
${\mcT'}^{\ft} \hookrightarrow {\mathcal{R}'}^{\ft}$ are
closed immersions, as a special case of
\cite[Lemma B.7.11]{KP23} (this nontrivially uses the fact
that $\T$ and $\T'$ are weakly induced). This allows us to make
sense of the following claim: that the Chai-Yu isomorphism
$\mathcal{R}^{\ft} \times_{\mO_F} \mO_F/\p_F^m
\rightarrow {\mathcal{R}'}^{\ft} \times_{\mO_{F'}} \mO_{F'}/\p_{F'}^m$
restricts to an isomorphism
$\mcT^{\ft} \times_{\mO_F} \mO_F/\p_F^m
\rightarrow {\mcT'}^{\ft} \times_{\mO_{F'}} \mO_{F'}/\p_{F'}^m$
(which we will show to be the desired Chai-Yu isomorphism).

Since the image of $\mathcal{T}^{\ft}(\mO_{\tilde F})$ is schematically dense
in $\mathcal{T}^{\ft} \times_{\mO_F} \mO_F/\p_F^m$
(by \cite[Lemma 8.5.1]{CY01}), this claim follows if we show that
$\mathcal{R}^{\ft} \times_{\mO_F} \mO_F/\p_F^m \rightarrow
{\mathcal{R}'}^{\ft} \times_{\mO_{F'}} \mO_{F'}/\p_{F'}^m$
takes the image of $\mathcal{T}^{\ft}(\mO_{\tilde F}) = \T(\tilde F)_b$
isomorphically onto that
of ${\mathcal{T}'}^{\ft}(\mO_{\tilde F'}) = \T'(\tilde F')_b$.
In other words, if we show that
the map $\R(\tilde F)_b/\R(\tilde F)_m \rightarrow
\R'(\tilde F')_b/\R'(\tilde F')_m$, obtained by evaluating
$\mathcal{R}^{\ft} \times_{\mO_F} \mO_F/\p_F^m \rightarrow
{\mathcal{R}'}^{\ft} \times_{\mO_{F'}} \mO_{F'}/\p_{F'}^m$
at $\mO_{\tilde F}/\p_{\tilde F}^m = \mO_{\tilde F'}/\p_{\tilde F'}^m$,
induces an isomorphism $\T(\tilde F)_b/\T(\tilde F)_m
\rightarrow \T'(\tilde F')_b/\T'(\tilde F')_m$.

But by the definition of a Chai-Yu isomorphism,
this map $\R(\tilde F)_b/\R(\tilde F)_m \rightarrow
\R'(\tilde F')_b/\R'(\tilde F')_m$ is a restriction of a standard isomorphism
(use Proposition \ref{pro: standard isomorphism strictly Henselian}
and Lemmas \ref{lm: standard isomorphism decreasing r}
and \ref{lm: weakly induced}),
and hence restricts to an isomorphism
$\T(\tilde F)_b/\T(\tilde F)_m \rightarrow \T'(\tilde F')_b/\T'(\tilde F')_m$
that is also a restriction of a standard isomorphism
(combine Proposition \ref{pro: standard isomorphism strictly Henselian}
with Lemmas \ref{lm: standard isomorphism decreasing r},
\ref{lm: weakly induced}
and \ref{lm: standard isomorphism functorial}).

This not only proves that $\mathcal{R}^{\ft} \times_{\mO_F} \mO_F/\p_F^m
\rightarrow {\mathcal{R}'}^{\ft} \times_{\mO_{F'}} \mO_{F'}/\p_{F'}^m$
restricts to an isomorphism
$\mcT^{\ft} \times_{\mO_F} \mO_F/\p_F^m
\rightarrow {\mcT'}^{\ft} \times_{\mO_{F'}} \mO_{F'}/\p_{F'}^m$, but
also that the restricted isomorphism, evaluated on
$\mO_{\tilde F}/\p_{\tilde F}^m = \mO_{\tilde F'}/\p_{\tilde F'}^m$,
is the isomorphism
$\T(\tilde F)_b/\T(\tilde F)_m \rightarrow \T'(\tilde F')_b/\T'(\tilde F')_m$
obtained by restricting a standard isomorphism. Thus, by definition
(and Lemma \ref{lm: weakly induced}),
$\mcT^{\ft} \times_{\mO_F} \mO_F/\p_F^m
\rightarrow {\mcT'}^{\ft} \times_{\mO_{F'}} \mO_{F'}/\p_{F'}^m$
is a Chai-Yu isomorphism. Its uniqueness follows
from Lemma \ref{lm: Chai-Yu isomorphism well-defined}.
\end{proof}

\section{Putting things together} \label{sec: putting things together}

\begin{proof}[Proof of Theorem \ref{thm: Ganapathy Kazhdan isomorphism}]
In the setting of \eqref{homomorphisms} of the theorem,
Theorem \ref{thm: CY01}, interpreted using 
Proposition \ref{pro: Chai-Yu isomorphism CY01}, gives us a Chai-Yu isomorphism
$\mcT^{\ft} \times_{\mO_F} \mO_F/\p_F^m \rightarrow
{\mcT'}^{\ft} \times_{\mO_{F'}} \mO_{F'}/\p_{F'}^m$.
Hence Proposition \ref{pro: Chai-Yu congruent} provides us with
a congruent isomorphism $\T(F)/\T(F)_m \rightarrow
\T'(F')/\T'(F')_m$. Note that this automatically also
gives the compatibility with the Chai-Yu isomorphism
(i.e., the commutativity of
the left square of \eqref{eqn: Chai-Yu Kottwitz compatibility}).
The latter assertion of
\eqref{homomorphisms} therefore follows from
Proposition \ref{pro: minimal congruent Chai-Yu}.
Item \eqref{homomorphisms functorial} of the theorem follows
from Lemma \ref{lm: congruent isomorphism}\eqref{congruent isomorphism functorial},
whose extra condition is satisfied by taking
$r$ to be any real number between
$m + \max(h(F, \T_1), h(F, \T_2))$
and $\min(\psi_{L_1/F}(l)/e(L_1/F), \psi_{L_2/F}(l)/e(L_2/F))$, where
$L_1$ and $L_2$ are minimal splitting extensions for $\T_1$ and $\T_2$
(use Lemma \ref{lm: MP intersection}).

As for \eqref{Chai-Yu Kottwitz compatibility} of the theorem, it
remains to prove the compatibility with the Kottwitz homomorphism
(the commutativity of the right-square of \eqref{eqn: Chai-Yu Kottwitz compatibility}).
By definition (Definition \ref{df: standard congruent Chai-Yu}\eqref{congruent isomorphism})
we reduce to a similar assertion for a suitable
$\T(\tilde F)/\T(\tilde F)_m \rightarrow \T'(\tilde F')/\T'(\tilde F')_m$, 
with $\tilde F/F$ a maximal unramified extension, which is
induced by a standard isomorphism
$\T(\tilde F)/\T(\tilde F)^{\naive}_r \rightarrow
\T'(\tilde F')/\T'(\tilde F')^{\naive}_r$.
Thus, the desired compatibility with the Kottwitz homomorphism
follows from the compatibility of the standard isomorphism
with the Kottwitz homomorphism
(Proposition \ref{pro: standard isomorphism Kottwitz homomorphism}).

Now we come to \eqref{LLC compatibility}. Set $h = h(F, \T)$.
The assumption $m + 4 h \lleq_{\T} l$ implies that we have
a congruent isomorphism
$\T(F)/\T(F)_{m + h} \rightarrow \T'(F')/\T'(F')_{m + h}$,
which by Lemma \ref{lm: congruent isomorphism}\eqref{congruent isomorphism s}
induces a standard isomorphism
$\T(F)/\T(F)^{\naive}_{m + h} \rightarrow \T'(F')/\T'(F')^{\naive}_{m + h}$,
and also induces a congruent isomorphism
$\T(F)/\T(F)_m \rightarrow \T'(F')/\T'(F')_m$.
Since $\T(F)^{\naive}_{m + h} \subset \T(F)_m$ and
$\T'(F')^{\naive}_{m + h} \subset \T'(F')_m$
(Remark \ref{rmk: MP intersection}),
$\T(F)/\T(F)^{\naive}_{m + h} \rightarrow \T'(F')/\T'(F')^{\naive}_{m + h}$
induces $\T(F)/\T(F)_m \rightarrow \T'(F')/\T'(F')_m$ as well.
Now \eqref{LLC compatibility} is easy to see
from Proposition \ref{pro: standard isomorphism LLC}, applied
with $m + h$ in place of $r$.

Now we address the weakly induced case.
Proposition \ref{pro: weakly induced tori isomorphism} gives
\eqref{homomorphisms} with $h(F, \T)$ replaced by $0$.
Lemma \ref{lm: standard isomorphism functorial}
then gives \eqref{homomorphisms functorial} with $h(F, \T_1)$ and $h(F, \T_2)$
replaced by $0$. For the compatibility with the Chai-Yu isomorphism,
first note that a Chai-Yu isomorphism
$\mathcal{T}^{\ft} \times_{\mO_F} \mO_F/\p_F^m
\rightarrow {\mathcal{T}'}^{\ft} \times_{\mO_{F'}} \mO_{F'}/\p_{F'}^m$
exists (Proposition \ref{pro: weakly induced tori Chai-Yu isomorphism}).
In this weakly induced case, the middle vertical arrow of
\eqref{eqn: Chai-Yu Kottwitz compatibility} is a standard
isomorphism (Proposition \ref{pro: weakly induced tori isomorphism}), so the
commutativity of the left square of \eqref{eqn: Chai-Yu Kottwitz compatibility}
is automatic from the definition of a Chai-Yu isomorphism.
Since the middle vertical arrow of
\eqref{eqn: Chai-Yu Kottwitz compatibility} is a standard isomorphism,
its compatibility with the Kottwitz homomorphism is
immediate from Proposition \ref{pro: standard isomorphism Kottwitz homomorphism}.
For the same reason, the compatibility with the LLC is
obvious from Proposition \ref{pro: standard isomorphism LLC}.
\end{proof}

\end{document}